\newtheorem{theorem}{Theorem}[section]
\newtheorem{prop}[theorem]{Proposition}
\newtheorem{lemma}[theorem]{Lemma}
\theoremstyle{remark}
\newtheorem{remark}{Remark}[section]
\theoremstyle{definition}
\newtheorem{definition}{Definition}[section]
\numberwithin{equation}{section}
\title{\vspace{-1.0cm} Semi-discrete linear hyperbolic  polyharmonic flows of closed polygons}
\author{\vspace{-2.0cm}James McCoy\footnote{James.McCoy@newcastle.edu.au, ORCID 0000-0001-6053-5144}, Jahne Meyer\footnote{Jahne.Meyer@newcastle.edu.au, ORCID 0009-0001-2850-8848, School of Information and Physical Sciences, University of Newcastle, Australia.  This work was completed while the first author was supported by Discovery Project DP180100431 of the Australian Research Council and the second author was supported by an UNRS Central Scholarship.  Parts of this work was completed while the first author was visiting Nanjing University of Information Science and Technology and the MATRIX Institute.  The authors are grateful for this support.}}
\date{}
\begin{document}
\maketitle
\begin{abstract}
We consider the damped hyperbolic motion of polygons by a linear semi-discrete analogue of polyharmonic curve diffusion.  We show that such flows may transition any polygon to any other polygon, reminiscent of the Yau problem of evolving one curve to another by a curvature flow, before converging exponentially to a point that, under appropriate rescaling, is a planar basis polygon. We also consider a hyperbolic linear semi-discrete flow of the Yau curvature difference flow, where a polygonal curve is able to flow to any other such that we get convergence to the target polygon in infinite time. 
\end{abstract}

\section{Introduction}
In \cite{chow2007semidiscrete}, Chow and Glickenstein considered a semi-discrete analogue of the second order curve shortening flow for smooth, closed curves.
Involving only a first time derivative, this may be considered a parabolic-type evolution.  Recently we considered the polyharmonic analogue of this work \cite{MM24}.  There we also considered a discrete analogue of the Yau problem of evolving one curve to another by a curvature flow.  In this article we replace the first time derivative by the second time derivative plus a first time derivative `damping term', thus considering a hyperbolic analogue of our previous work.  Our ensuing evolution equation now being second order in time requires more conditions in order to have a unique solution.  We prescribe not only initial conditions but also conditions at a fixed later time, thus generating flows that transition from one polygonal curve through another and then asymptotically to a limiting shape.  These flows thus flow one curve to another in the spirit of the Yau problem and then flow to a limiting polygonal curve.

Our setup similar is as in the earlier works.  Given an ordered collection of $n$ points in $\mathbb{R}^p$, joined in order to form a piecewise linear, closed immersed `curve', the flow is given by a second order system of $n$ coupled linear ordinary differential equations.  The constant coefficient matrix of this system is an $m$th order difference-type operator.  

Linearity permits analysis via finite Fourier series.  As in \cite{MM24} and in the case that $\beta>0$, we find that the higher the order $m$, the faster the convergence, in the first case under appropriate rescaling, to a regular basis polygon.  To our knowledge semi-discrete linear hyperbolic flows have not been considered before, but the hyperbolic analogue, with damping, of the curve shortening flow has been considered (see \cite{DN23} and the references contained therein).

Fully discrete flows have been considered by Rademacher and Rademacher \cite{rademacher2017solitons, rr21}.  These are related to discretisation of  partial differential equations in space and time using finite differences, while  semi-discrete flows are related to the `method of lines' where there is discretisation in all but one dimension (usually time), reducing a given system of partial differential equations to a system of ordinary differential equation.  The equations we consider are closely related to discretisations via finite differences or finite elements; a particular hyperbolic reference is \cite{DN23}.  We would also like to mention that first order semi-discrete flows have been considered in several engineering applications \cite{GK10, HM15, HM21} and we suspect corresponding second order flows to find useful applications there as well.  The idea of replacing a first time derivative by a second derivative with first derivative damping term also appears in the heavy ball with friction system \cite{GM09}.

Settings in which curvature flows result in linear ordinary or partial differential equations are quite rare.  Apart from the present setting, the main other setting where flow equations are linear is where evolving convex curves and hypersurfaces are expressed via the Gauss map parametrisation and flow speeds have a specific structure.  The linear structure was exploited by Smoczyk for curves and hypersurfaces evolving by parabolic equations in \cite{SM05}; the curve results were extended by the first author, Schrader and Wheeler in \cite{MSW23} to higher order equations and curves with general nonzero rotation number.  The first author and Otuf considered linear hyperbolic flows of curves in \cite{MO24}, while the first author considered convex hypersurfaces evolving by analogous second order linear hyperbolic flows in \cite{M24}.

The structure of this article is as follows.  In Section \ref{S:background} we set up the necessary notation and background for us to be able to state and prove our main results, including key properties of circulant matrices and the differential operators on which our flows are based.  In Section \ref{sec:prelim} we introduce the semi-discrete linear hyperbolic polyharmonic flows and uncover some fundamental properties of these flows.  In Section \ref{S:planar} we specialise to evolving planar polygons, finding a representation formula for general solutions for the flow that allows us to deduce long-time behaviour. In Section \ref{sec:higher_codimension} we consider evolving polygonal curves in general codimension.
Finally in Section \ref{S:Yau} we consider a nonhomogeneous version of the flow which addresses the discrete analogue of Yau's curvature difference flow.

The authors are grateful to Professor Ben Andrews for a discussion on self-similar solutions in higher codimension.

\section{The semi-discrete linear hyperbolic polyharmonic flow of polygons} \label{S:background}

\begin{definition}\label{def:polygon}
An $n$ sided polygon, or $n-$gon, $\vec{X}$, is defined as a set of points $\vec{X}=(X_0, X_1, \ldots, X_{n-1})^T.$ For $j=0,1,\ldots, n-1,$ each vertex $X_j\in \mathbb{R}^p$ for some integer $p\geq 2.$
We set $X_n = X_0$ so the polygon is closed and the indices of the vertices are considered modulo $n.$
\end{definition}

The \emph{vertices} of the polygon are the points $X_j$, $j = 0,1,,\ldots, n-1$. The \emph{edges} of the polygon are the line segments $\overline{X_jX_{j+1}}$  joining each vertex $X_j$ to $X_{j+1}$. 
In general, $\vec{X}$ is a $n\times p$ matrix with real entries. When $p=2$, the polygon lies in the plane.  In this case we may consider each vertex, $X_j = (x_j\  y_j) \in \mathbb{R}^2,$ to be a point in $\mathbb{C}$, writing $ X_j = x_j + iy_j.$  Thus $\vec{X}$ as an $n \times 2$ matrix representing a polygon in $\mathbb{R}^2$ can also be thought of as a vector in $\mathbb{C}^n$.

A \textit{normal} to each vertex $X_j$ is given by
\begin{equation} \label{E:Nj}
N_j = (X_{j+1} - X_j) + (X_{j-1} - X_j), 
\end{equation}
such that the corresponding system of equations for each normal vector can be expressed in matrix form as 
\begin{equation}
    \vec{N} = M\vec{X}
\end{equation}
where $M$ is the $n \times n$ matrix given by
\begin{equation}\label{matrix:M}
M = 
\begin{bmatrix}
-2 & 1 & 0 & \cdots & 0 & 1\\
1 & -2 & 1 & 0 & \vdots & 0 \\
0 & 1 & -2 & 1 & 0 & \vdots\\
\vdots & 0 & \ddots & \ddots & \ddots & 0\\
0 & \ddots & 0 & 1 & -2 & 1\\
1 & 0 & \cdots & 0 & 1 & -2\\
\end{bmatrix}.\\
\end{equation}

\begin{definition}
Let $\vec{X}(t)$ be a family of polygons as given in Definition \ref{def:polygon}. Given $m\in \mathbb{N}$ and a constant $\beta\geq 0$, polygons $\vec{X}(t)$ satisfying
\begin{equation}\label{eqn:polyflow}
\frac{d^2\vec{X}}{dt^2} + \beta \frac{d\vec{X}}{dt} = (-1)^{m+1}M^m\vec{X} \tag{$SHPF_m$}
\end{equation}
 evolve by the $2m$th order \textit{semi-discrete linear hyperbolic polyharmonic flow}, where $M$ is the matrix given in (\ref{matrix:M}). 
 \end{definition}

\begin{remark}
\begin{enumerate} 
  \item The elements of $M$ in \eqref{matrix:M} are the coefficients in finite difference approximations for the `second spatial derivatives' associated $\vec{X}$, as in \eqref{E:Nj}.  Powers of $M$ then yield higher finite differences, consistent with those obtained for example by Newton's divided difference approach.\\

  \item We have restricted to $\beta\geq 0$ as we wish to allow only no damping ($\beta=0$) or damping in the traditional sense of a mechanical or electrical system.  One could also consider $\beta<0$ above but this is not relevant for our applications.\\

  \item The case $m=0$ of \eqref{eqn:polyflow} is $$\frac{d^2\vec{X}}{dt^2} + \beta \frac{d\vec{X}}{dt} = - \vec{X}\mbox{.}$$ Solution behaviour will depend on the value of $\beta$.  If, for example, $\beta=0$ the general solution is 
  \begin{equation} \label{E:0}
    \vec{X}(t) = \left( \cos t \right) \vec{Y} + \left( \sin t \right) \vec{Z} 
    \end{equation}
    for arbitrary polygons $\vec{Y}$ and $\vec{X}$.  
  Polygons $\vec{Y}$ and $\vec{Z}$ can then be determined for given initial data (or more general data).  In particular, for this $\beta$ we observe that solutions are \emph{breathers}, that is, periodic in time.  They are also \emph{ancient} (can be extended back to $t\rightarrow -\infty$) and \emph{eternal} (exist for all time).
As a specific example, the solution with $\vec{X}(0)=X_0$ and $\vec{X}'(0) = 0$ (the `trivial' polygon with all points at the origin) is 
$$\vec{X}(t) = \left( \cos t\right) X_0 \mbox{.}$$
  On the other hand, if we restrict the general solution \eqref{E:0} to the time interval $\left[ 0, \frac{\pi}{2} \right]$ we have a transition from $\vec{X}(0)= \vec{Y}$ to $\vec{X}\left( \frac{\pi}{2}\right) = \vec{Z}$, a kind of finite, exact solution to the Yau problem in this setting using a second order flow.  Extending the time interval, the solution is breathing between states $\vec{Y}$ and $\vec{Z}$.

  Alternatively, for $0<\beta<2$, the general solution has the form 
  $$\vec{X}(t) = e^{-\frac{\beta}{2}t} \left[ \cos \left( \gamma \, t\right) \vec{Y} + \sin \left( \gamma \, t\right) \vec{Z} \right]$$
  where $\gamma = \sqrt{1-\left( \frac{\beta}{2}\right)^2 }$ and we observe solutions again exist for all time but $\vec{X}\left( t\right) \rightarrow 0$ as $t\rightarrow \infty$.  The rescaled solutions $e^{\frac{\beta}{2}} \vec{X}\left( t\right)$ are exactly 
  $$e^{\frac{\beta}{2}t}\vec{X}(t) =  \cos \left( \gamma t\right) \vec{Y} + \sin \left( \gamma t\right) \vec{Z} \mbox{.}$$
  Thus the solutions exhibit asymptotic breathing behaviour as $t\rightarrow \infty$ and under the same rescaling the ancient solution emanates from an asymptotically breathing solution between the states $\vec{Y}$ and $\vec{Z}$.

  In the case $\beta = 2$ the solution has the form
  $$\vec{X}\left( t\right) = \left( 1-t\right) e^{-t} \,\vec{Y} + t\, e^{\left( 1-t\right)} \vec{Z} \mbox{.}$$
  Again solutions make sense for all time and $\vec{X}\left( t\right) \rightarrow 0$ as $t\rightarrow \infty$.  Observe that rescaled solutions satisfy
  $$\frac{1}{t} e^t \vec{X}\left( t\right) = \left( \frac{1}{t} - 1\right) \vec{Y} + e\, \vec{Z} \rightarrow -\vec{Y} + e\, \vec{Z} \mbox{,}$$
  that is, under this rescaling they approach a precise linear combination of the given data $\vec{Y}$ and $\vec{Z}$.  One can argue similarly in considering `where ancient solutions come from'.
  
  Finally for $\beta>2$ the solution has the form
  $$\vec{X}(t) = e^{r_- t} \vec{Y} + e^{r_+ t} \vec{Z}$$
  where $r_\pm = -\frac{\beta}{2} \pm \sqrt{\left(\frac{\beta}{2}\right)^2 -1}$ are both negative.  So in this case solutions again decay to zero but there are no oscillations.  Rescaling we observe in this case
  $$e^{-r_+ t} \vec{X}(t) \rightarrow \vec{Z} \mbox{.}$$
  
  For the remainder of this article we consider $m\in \mathbb{N},$ the strictly positive integers.\\

  \item The system of ODEs \eqref{eqn:polyflow} is second order in time and a natural approach to working with it would be to set $\vec{Y} = \frac{d \vec{X}}{dt}$ and consider the associated first order system of $2n$ equations.\\
  
  \item As \eqref{eqn:polyflow} is a homogeneous system of linear differential equations with a constant coefficient matrix, existence of a unique solution in a neighbourhood of any initial data is completely standard as one can write down an explicit formula for the solution.  Moreover the formula for the solution reveals that the solution exists for all time, given any initial polygon and indeed ancient solutions also make sense for any initial polygon.  These are properties that usually do not hold in full generality for other geometric flows and generally require much work to prove, whether they hold in general or under particular conditions.

  \end{enumerate}

  \end{remark}

\section{Preliminary properties of the semi-discrete hyperbolic 
 polyharmonic flow}\label{sec:prelim}

In this section we detail the setup of the semi-discrete linear hyperbolic polyharmonic flow and discuss some of its properties.  Some of this material is similarly described in \cite{chow2007semidiscrete} and \cite{MM24}.

The matrix $M$ given in (\ref{matrix:M}) is a \textit{circulant} matrix. 
A general circulant matrix $B$ has the form
\begin{equation*}
B = 
\begin{bmatrix}
b_0 & b_1 & b_2 & \cdots & b_{n-2} & b_{n-1}\\
b_{n-1} & b_0 & b_1  & \cdots& b_{n-3} & b_{n-2} \\
b_{n-2} & b_{n-1} & b_0  & \cdots & b_{n-4} & b_{n-3}\\
\vdots & \vdots & \vdots & \cdots & \vdots & \vdots\\
b_2 & b_3 & b_4 &\cdots & b_0 & b_1\\
b_1 & b_2 & b_3 & \cdots  & b_{n-1} & b_0\\
\end{bmatrix}
\end{equation*}
where each row is produced by shifting each of the elements of the previous row to the right. A matrix of this form is also denoted $B = \text{circ}(b_0, b_1,\ldots, b_{n-1}).$
Many properties of circulant matrices are detailed in \cite{davismatrices} with further details about their eigenvalues and eigenvectors in \cite{tee2007eigenvectors}. The product of circulant matrices is circulant, therefore $M^m$ is circulant for any $m \in \mathbb{N}.$ For a characterisation of the elements of $M^m$ we refer to \cite[Lemmas 3.2, 3.3]{MM24}.

Our next result is an extension of \cite[Lemma 3.5]{MM24} to include the second time derivative.

\begin{lemma}\label{lem:matrix_properties}
Let a vector in $\mathbb{R}^n$ with all entries equal to the same constant $c$ be denoted by $\vec{c}$. Also let $E$ denote a $p \times p$ matrix. We have the following properties:
\begin{enumerate}
    \item $M^m\vec{1} = \vec{0}$, and for any $a_i \in \mathbb{R}$ for $i=1, 2, \ldots, p$,
    \item $\frac{d}{dt}\left(\vec{X}E + (\vec{a}_1, \vec{a}_2, \ldots \vec{a}_p)\right) = (-1)^{m+1}M^m(\vec{X}E + (\vec{a}_1, \vec{a}_2, \ldots \vec{a}_p))$ and
    \item $\frac{d^2}{dt^2}(\vec{X}E + (\vec{a}_1, \vec{a}_2, \ldots \vec{a}_p)) = M^{2m}(\vec{X}E + (\vec{a}_1, \vec{a}_2, \ldots \vec{a}_p))$.
\end{enumerate}
\end{lemma}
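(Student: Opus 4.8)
The plan is to prove the three items in order, using item 1 as the engine for the other two. The whole argument rests on two elementary facts about the operator $M^m$: that it annihilates any constant column, and that, acting on the left, it commutes with right multiplication by the fixed $p\times p$ matrix $E$. Throughout, as in \cite{MM24}, I take $\vec{X}=\vec{X}(t)$ to evolve by the first order flow $\frac{d\vec{X}}{dt}=(-1)^{m+1}M^m\vec{X}$; this hypothesis is exactly what makes items 2 and 3 meaningful, since their left-hand sides are time derivatives while their right-hand sides are not.

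For item 1 I would note that $M$ is circulant with row sums $-2+1+1=0$, so $M\vec{1}=\vec{0}$ directly (equivalently, the normal \eqref{E:Nj} of a constant polygon vanishes). A one-line induction then gives $M^m\vec{1}=M^{m-1}(M\vec{1})=\vec{0}$ for every $m\in\mathbb{N}$. Since each column $\vec{a}_i=a_i\vec{1}$ of $(\vec{a}_1,\ldots,\vec{a}_p)$ is constant, item 1 immediately yields $M^m(\vec{a}_1,\ldots,\vec{a}_p)=0$, the fact I use repeatedly. For item 2 I differentiate termwise: because $E$ and the $\vec{a}_i$ are constant in $t$,
$$\frac{d}{dt}\bigl(\vec{X}E+(\vec{a}_1,\ldots,\vec{a}_p)\bigr)=\frac{d\vec{X}}{dt}\,E=(-1)^{m+1}M^m\vec{X}\,E,$$
using the first order flow in the last step. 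Associativity gives $M^m\vec{X}\,E=M^m(\vec{X}E)$, and adding the vanishing term $(-1)^{m+1}M^m(\vec{a}_1,\ldots,\vec{a}_p)=0$ lets me reassemble the right-hand side as $(-1)^{m+1}M^m\bigl(\vec{X}E+(\vec{a}_1,\ldots,\vec{a}_p)\bigr)$, which is item 2.

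Item 3 then follows by differentiating item 2 once more and substituting it back into itself. Writing $\vec{W}=\vec{X}E+(\vec{a}_1,\ldots,\vec{a}_p)$ for brevity, item 2 reads $\frac{d\vec{W}}{dt}=(-1)^{m+1}M^m\vec{W}$, so
$$\frac{d^2\vec{W}}{dt^2}=(-1)^{m+1}M^m\frac{d\vec{W}}{dt}=(-1)^{m+1}M^m(-1)^{m+1}M^m\vec{W}=(-1)^{2(m+1)}M^{2m}\vec{W}=M^{2m}\vec{W},$$
since $(-1)^{2(m+1)}=1$. This is item 3.

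I do not expect a genuine obstacle: the content is entirely the two structural observations above, and the only places demanding care are bookkeeping. One must keep left multiplication by $M^m$ distinct from right multiplication by $E$, so that the legitimate commutation $M^m(\vec{X}E)=(M^m\vec{X})E$ is used rather than a spurious $EM^m$, and one must track the sign so that the two factors of $(-1)^{m+1}$ in item 3 collapse to $+1$. The one conceptual point worth flagging is that the hypothesis really is the first order flow, not the hyperbolic one; this is precisely what allows item 3 to read as the clean second order identity $\frac{d^2\vec{W}}{dt^2}=M^{2m}\vec{W}$ with no dependence on $\beta$.
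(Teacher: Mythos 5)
Your proof is correct and takes essentially the same route as the paper, which simply cites \cite{MM24} for parts 1 and 2 and notes that part 3 ``follows by a simple calculation'' --- that calculation being exactly your differentiation of item 2 and reuse of the first-order flow equation, with the two factors of $(-1)^{m+1}$ collapsing to give $M^{2m}$. Your explicit flagging of the implicit hypothesis --- that $\vec{X}$ evolves by the first-order flow $\frac{d\vec{X}}{dt}=(-1)^{m+1}M^m\vec{X}$ of \cite{MM24}, not by \eqref{eqn:polyflow} --- is a useful clarification, since the lemma as stated in the hyperbolic paper leaves this unstated and items 2 and 3 would otherwise fail.
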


\begin{proof}
    The first two parts above are proved in \cite{MM24}.  The third follows by a simple calculation.
    \end{proof}

\begin{remark}
\begin{enumerate}
   \item As remarked in \cite{MM24}, the polygon $\vec{X}$ can be considered as a graph $G=\{V, E\}$ where $V= \{X_0,\ldots X_{n-1}\}$ are the vertices of the polygon, and $E$ is the set of edges between consecutive vertices and the degree of each vertex is 2. Therefore the matrix $L= -M$ is a Laplacian matrix which has a corresponding definition as a Laplacian operator \cite{chung1997spectral, wardetzky2007discrete}. We have $-L^m = (-1)^{m+1}M^m$ and the semi-discrete polyharmonic flow can be associated with a higher order linear hyperbolic flow for graphs.
\end{enumerate}
    
\end{remark}

\subsection{Matrix properties}

The eigenvectors of any $n\times n$ circulant matrix are in $\mathbb{C}^n$ and  given by,
\begin{equation}\label{eqn:epolygons}P_k = (1, \omega^k, \omega^{2k}, \ldots, \omega^{(n-1)k})^T, \text{ for } k = 0, 1, \ldots , n-1,
\end{equation}
where $\omega = e^{2\pi i/n}.$ 
Here powers of $\omega$ are the $n$th roots of unity.  Each $P_k$ may be thought of as a polygon by placing the entries of $P_k$ into $\mathbb{C}$ as the vertices of the polygon, and joining consecutive entries by arrows. In particular, $P_0 = (1,1,\ldots,1)^T$ is a point, and the remaining $P_k$ are either regular convex polygons or star shaped regular polygons. Each pair $P_k$ and $P_{n-k}$ comprises the same regular polygon, but with the arrows in the opposite orientation. The exception is when $n$ is even then $P_{\frac{n}{2}}$ in isolation is a  line interval where the arrows between points overlap each other $\frac{n}{2}$ times. For each $n$, the collection $\{P_k\}_{k=1}^{n-1}$ produces all regular polygons whose number of sides is a factor of $n$ with $P_{\frac{n}{2}}$ as the exception for even $n$.

\begin{prop}\label{prop:chow}\cite{chow2007semidiscrete}
The set of eigenvectors, $\{\frac{1}{\sqrt{n}}P_k\}^{n-1}_{k=0}$ of $M$ forms an orthonormal basis for $\mathbb{C}^n$ where the corresponding eigenvalue, $\lambda_k,$ for each $P_k$ is given by \begin{equation*} 
  \lambda_k = -4\sin^2\left(\frac{\pi k}{n}\right).
\end{equation*}
\end{prop}

Importantly for us, denoting by $\lambda_{m,k}$ the eigenvalues of matrix $(-1)^{m+1}M^m$, with corresponding to eigenvectors $P_k$, we have 
\begin{equation}\label{eqn:eigenvalues}
    \lambda_{m,k} := (-1)^{m+1}\lambda_k^m = - 4^m \left[ \sin^2\left(\frac{\pi k}{n}\right) \right]^m.
\end{equation}
Observe that all eigenvalues are negative, except for $\lambda_{m, 0}$.

The eigenvalues of a circulant matrix occur in conjugate pairs with some exceptions. That is, $\lambda_k = \overline{\lambda_{n-k}}$ with the exception of $\lambda_0$ and $\lambda_{\frac{n}{2}}$ for $n$ even (\cite{davismatrices},\cite{tee2007eigenvectors}). Furthermore, if the circulant matrix is real and symmetric, the eigenvalues are real and thus each eigenvalue pair $\lambda_k = \lambda_{n-k}$.  This holds for $(-1)^{m+1}M^m$ for each $m$ and so we have $\lambda_{m,0} = 0, \lambda_{m,k} = \lambda_{m,n-k}$ and $\lambda_{m, n/2}$ does not have an equal pair for $n$ even. Also $\lambda_{m,k} < \lambda_{m,1} < 0$ for all $k = 2,\ldots ,\lfloor \frac{n}{2} \rfloor.$

\begin{lemma}\label{lem:constant_vec}
Given a vector $\vec{x}\in \mathbb R^n,$ if  
\begin{equation}\label{eqn:constant_vec_eqn}
    (-1)^{m+1}M^m\vec{x} = \vec{c} 
\end{equation}

for a constant $c,$ then $c=0.$
\end{lemma}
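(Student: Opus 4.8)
The plan is to reduce the statement to the single observation that a constant vector lies in the kernel of $(-1)^{m+1}M^m$, so it cannot simultaneously lie in the range unless it vanishes. First I would rewrite the hypothesis: a vector all of whose entries equal $c$ is $\vec{c} = c\vec{1}$, so \eqref{eqn:constant_vec_eqn} reads $(-1)^{m+1}M^m\vec{x} = c\vec{1}$.

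The key step exploits Lemma \ref{lem:matrix_properties}(1), namely $M^m\vec{1} = \vec{0}$. Because $M$ is real and symmetric, so is $M^m$; hence $M^m\vec{1} = \vec{0}$ also yields $\vec{1}^T M^m = (M^m\vec{1})^T = \vec{0}^T$, i.e.\ every column of $M^m$ sums to zero. Left-multiplying the hypothesis by $\vec{1}^T$ (that is, summing all $n$ scalar equations) then annihilates the left-hand side, since $\vec{1}^T\left[(-1)^{m+1}M^m\right]\vec{x} = (-1)^{m+1}(\vec{1}^T M^m)\vec{x} = 0$. The right-hand side becomes $\vec{1}^T(c\vec{1}) = nc$, so $nc = 0$ and therefore $c = 0$.

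An equivalent and perhaps more conceptual route is via the eigenbasis of Proposition \ref{prop:chow}: since $P_0 = \vec{1}$ with associated eigenvalue $\lambda_{m,0} = 0$, while all other $\lambda_{m,k}$ are strictly negative, the kernel of $(-1)^{m+1}M^m$ is exactly $\operatorname{span}\{\vec{1}\}$ and, by symmetry, is orthogonal to the range. Taking the inner product of the hypothesis with $P_0$ and using self-adjointness gives $\langle \vec{c}, P_0\rangle = \langle (-1)^{m+1}M^m\vec{x}, P_0\rangle = \lambda_{m,0}\langle \vec{x}, P_0\rangle = 0$, and since $\langle \vec{c}, P_0\rangle = cn$ we again conclude $c = 0$. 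There is no real obstacle here; the only point requiring care is invoking the symmetry of $M^m$ to pass from $M^m\vec{1}=\vec{0}$ to the vanishing of the column sums (equivalently, to the orthogonality of kernel and range), after which the result is immediate.
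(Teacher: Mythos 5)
Your proposal is correct, and both of your routes land on the same underlying mechanism as the paper: testing equation \eqref{eqn:constant_vec_eqn} against the zero eigenvector $P_0 = \vec{1}$ of $(-1)^{m+1}M^m$. The paper's own proof invokes the full diagonalisation $(-1)^{m+1}M^m = \frac{1}{n}F\,\mathrm{diag}(\lambda_{m,k})\bar{F}$, rearranges to $\mathrm{diag}(\lambda_{m,k})\bar{F}\vec{x} = \bar{F}\vec{c}$, and reads off the first entry to get $\lambda_{m,0}\sum_j x_j = nc$; since the first row of $\bar{F}$ is all ones, this is exactly your second route (the inner product with $P_0$ combined with self-adjointness and $\lambda_{m,0}=0$), just written in matrix form. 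Your first route is genuinely more elementary: it bypasses the Fourier matrix and the eigenstructure altogether, using only $M^m\vec{1}=\vec{0}$ from Lemma \ref{lem:matrix_properties} together with the symmetry of $M^m$ to conclude $\vec{1}^T M^m = \vec{0}^T$, after which summing the $n$ scalar equations gives $nc=0$ directly. (In fact symmetry is not even essential here, since $M^m$ is circulant and so has zero row and column sums; either justification is fine.) What the paper's formulation buys is consistency with the diagonalisation machinery it reuses throughout; what yours buys is a shorter, self-contained argument free of complex arithmetic. No gaps.
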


\begin{proof}
    We use the diagonalisation of the matrix $(-1)^{m+1}M^m$ which is given by
\begin{equation}\label{eqn:diagnoalised}
    (-1)^{m+1}M^m = \frac{1}{n}F\text{diag}\left(\lambda_{m,k}\right)\bar{F},
\end{equation}

where $F$ is the Fourier matrix,
\begin{equation}\label{eqn:Fouriermatrix}
    F = \left[P_0 \bigg| P_1 \bigg| \cdots \bigg| P_{n-1}\right] = 
    \begin{bmatrix}
1 & 1 & 1 & \cdots  & 1\\
1 & \omega^1 & \omega^2  & \cdots & \omega^{n-1} \\
1 & \omega^2 & \omega^4 & \cdots  & \omega^{2(n-1)}\\
\vdots & \vdots & \vdots & \ddots & \vdots\\
1 & \omega^{n-1} & \omega^{2(n-1)} & \cdots & \omega^{(n-1)(n-1)}\\
\end{bmatrix},
\end{equation}
and $\text{diag}(\lambda_{m,k})$ is the diagonal matrix with diagonal entries given by the eigenvalues $\lambda_{m,0}, \lambda_{m,1},\ldots, \lambda_{m,n-1}.$ The matrix $\bar{F}$ in (\ref{eqn:diagnoalised}) is the complex conjugate of the matrix $F$, where $F^{-1} = \frac{1}{n}\bar{F}.$

Rearranging (\ref{eqn:constant_vec_eqn}) gives
$$\text{diag}(\lambda_{m,k})\bar{F}\vec{x} = \bar{F}\vec{c}.$$

Considering the first entry of each side in this equation gives $\lambda_{m,0}\sum_{j=1}^nx_j = nc$ and, given $\lambda_{m,0} = 0$, we conclude $c=0.$
\end{proof}

We complete this section with a restatement of \cite[Lemma 3.7]{MM24} which will be needed in next section.  The result characterises the solutions $\vec{x}$ in Lemma \ref{lem:constant_vec} as just the  vectors with all entries equal.

\begin{lemma}\label{lem:nullspace_matrix}
Given a vector $\vec{x}\in\mathbb{R}^n,$ if $(-1)^{m+1}M^m\vec{x} = \vec{0}$ then $\vec{x}$ is a constant vector with all entries equal. That is  $\vec{x} = \vec{c} = \left( c, c, \ldots, c\right)^T$ for a constant $c$. 
\end{lemma}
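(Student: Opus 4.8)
The plan is to exploit the diagonalisation of $(-1)^{m+1}M^m$ already recorded in \eqref{eqn:diagnoalised} together with the explicit eigenvalues \eqref{eqn:eigenvalues}, mirroring the argument used for Lemma \ref{lem:constant_vec}. The essential point is that the kernel of $(-1)^{m+1}M^m$ is exactly one-dimensional and is spanned by $P_0 = (1,1,\ldots,1)^T$, because all of the eigenvalues $\lambda_{m,k}$ are strictly negative except for $\lambda_{m,0} = 0$.

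Concretely, I would first substitute $(-1)^{m+1}M^m = \tfrac{1}{n}F\,\text{diag}(\lambda_{m,k})\,\bar F$ into the hypothesis $(-1)^{m+1}M^m\vec{x} = \vec{0}$ and multiply on the left by $F^{-1} = \tfrac{1}{n}\bar F$. Since $\tfrac1n \bar F F = I$, this reduces the equation to $\text{diag}(\lambda_{m,k})\,\bar F \vec{x} = \vec{0}$. Writing $\vec{y} = \bar F \vec{x}$ with components $y_k$, the system decouples into the scalar equations $\lambda_{m,k}\,y_k = 0$ for $k = 0,1,\ldots,n-1$. By \eqref{eqn:eigenvalues} we have $\lambda_{m,k} = -4^m\big[\sin^2(\pi k/n)\big]^m \neq 0$ for every $k = 1,\ldots,n-1$, so $y_k = 0$ for all such $k$, while $y_0$ is unconstrained since $\lambda_{m,0}=0$. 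Inverting, $\vec{x} = \bar F^{-1}\vec{y} = \tfrac1n F\,(y_0,0,\ldots,0)^T = \tfrac{y_0}{n}\,P_0$, which is precisely a constant vector with every entry equal to $c := y_0/n$. Equivalently, and perhaps more transparently, one may expand $\vec{x} = \sum_{k=0}^{n-1} c_k P_k$ in the orthonormal eigenbasis of Proposition \ref{prop:chow}, apply $(-1)^{m+1}M^m$ term by term to obtain $\sum_k c_k \lambda_{m,k} P_k = \vec 0$, and conclude from linear independence of the $P_k$ that $c_k\lambda_{m,k}=0$, hence $c_k = 0$ for $k\geq 1$ and $\vec{x} = c_0 P_0$.

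I expect no serious obstacle here: the content of the lemma is entirely the assertion that the zero eigenvalue is simple, which is immediate from the sign information already established after \eqref{eqn:eigenvalues}. The only points requiring a line of care are, first, keeping the conjugate Fourier factors straight when inverting (using $F^{-1} = \tfrac1n\bar F$ and $\bar F^{-1} = \tfrac1n F$), and second, observing that reality of $\vec{x}$ forces the scalar $c = y_0/n = \tfrac1n\sum_{j} x_j$ to be real, so that $\vec{x} = (c,c,\ldots,c)^T$ with $c\in\mathbb{R}$ as claimed. This lemma is the converse refinement of Lemma \ref{lem:constant_vec}, upgrading the conclusion ``$c=0$'' there to a complete description of the null space.
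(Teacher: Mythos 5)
Your proof is correct. Note that the paper itself gives no proof of this lemma---it is quoted as a restatement of \cite[Lemma 3.7]{MM24}---but your diagonalisation argument is exactly the technique the paper uses to prove Lemma \ref{lem:constant_vec}, and both of your variants (inverting via $F^{-1}=\tfrac{1}{n}\bar F$, or expanding $\vec{x}$ in the eigenbasis $\{P_k\}$ of Proposition \ref{prop:chow}) correctly reduce the claim to the simplicity of the zero eigenvalue, namely $\lambda_{m,k}\neq 0$ for $k=1,\ldots,n-1$, which follows from \eqref{eqn:eigenvalues}.
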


\section{Planar solutions to the semi-discrete linear hyperbolic polyharmonic flow} \label{S:planar}

In this section we consider planar solutions of \eqref{eqn:polyflow}.  First, in part \ref{sec:selfsim_solutions}, we consider self similar solutions, then, in part \ref{sec:flow_solutions} we consider solutions with general initial polygon $\vec{X}(0) = \vec{X}_0.$  Because \eqref{eqn:polyflow} is second order in time, specifying only $\vec{X}(0)= \vec{X}_0$ does not yield a unique solution in general.

\subsection{Planar self-similar solutions}\label{sec:selfsim_solutions}
Here we are interested in \emph{self-similar solutions} to \eqref{eqn:polyflow}, that is, solutions $\vec{X}(t)$ related to the initial polygon $\vec{X}(0) = \vec{X}_0$ via the formula

\begin{equation} \label{E:ss}
\vec{X}(t) =  g(t)\vec{X}_0R(f(t)) + \vec{h}(t),
\end{equation} 
where $g,f :\mathbb{R} \rightarrow\mathbb{R}$  and $\vec{h}: \mathbb{R} \rightarrow \mathbb{M}_{n\times 2}\left( \mathbb{R}\right)$ are twice-differentiable functions.  Here $g(t)$ represents scaling ($g(t) > 0$ for all $t\in \left[ 0, T\right)$), $R(f(t))$ is the $2\times 2$ rotation matrix by angle  $f(t),$ and $\vec{h}(t)$ is a $n\times 2$ matrix corresponding to translation where $\vec{h}(t) = (\vec{h}_1(t)\ \vec{h}_2(t))$ and $h_1, h_2: \mathbb{R} \rightarrow \mathbb{R}$. We have $g(0) = 1,\  f(0) = 0,$ and $\vec{h}(0) = 0_{n\times 2},$ where $0_{n\times 2}$ is the $n\times 2$ zero matrix, such that $\vec{X}(0) = \vec{X}_0.$ Note that when considering $\vec{X}$ as a polygon in $\mathbb{C},$ equation \eqref{E:ss} is instead written
$$\vec{X}(t) = g(t) e^{i \, f(t)} \vec{X}_0 + \vec{h}(t).$$
where rotation is given by $e^{if(t)}$ and translation by $\vec{h}(t) \in \mathbb{C}^n.$

\begin{prop}\label{prop:selfsimilar}
If a family of polygons in the plane $\vec{X}(t)$ is a self-similar solution to the flow (\ref{eqn:polyflow}) by scaling, then $\vec{X}(t)$ has the form 

\begin{equation}
      \vec{X}(t) = g\left( t\right)(c_1 P_k + c_2 P_{n-k})
\end{equation}
for some fixed $k\in \left\{ 0, 1, \ldots, \lfloor \frac{n}{2} \rfloor \right\}$, where
$$g\left( t\right) = \begin{cases}
    c\, t + 1 & \mbox{ for }\beta=0 \mbox{ and } k=0\\
    \frac{c}{\beta} + \left[ 1 - \frac{c}{\beta} \right] e^{-\beta\, t} & \mbox{ for } \beta > 0 \mbox{ and } k =0\\
 e^{-\frac{\beta}{2}t} \left[ \cos \left( \gamma_{m, k}\, t\right) +   
    c\, \sin\left( \gamma_{m, k}\, t\right) \right] & \mbox{ for } 0 \leq \beta < 2 \sqrt{\left| \lambda_{m, k} \right|} \mbox{ and } k\in \left\{ 1, \ldots, \lfloor \frac{n}{2} \rfloor \right\} \mbox{.}\\
    e^{r_+ t} + c\, \left( e^{r_-t} - e^{r_+ t} \right) & \mbox{ for } \beta \geq 2 \sqrt{\left| \lambda_{m, k} \right|} \mbox{ and } k\in \left\{ 0, \ldots, \lfloor \frac{n}{2} \rfloor \right\}\\ 
   
\end{cases}$$
Here $r_\pm = - \frac{\beta}{2} \pm \sqrt{\left( \frac{\beta}{2} \right)^2 + \lambda_{m, k}}, \ \gamma_{m, k} = \sqrt{ \left| \left( \frac{\beta}{2} \right)^2 + \lambda_{m, k}\right|}$ and
$c \in \mathbb{R}, c_1, c_2 \in \mathbb{C}$ are constants.  In the case $n$ is even, there is only the one polygon corresponding to $k= \frac{n}{2}$. 
\end{prop}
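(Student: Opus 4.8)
The plan is to substitute the self-similar ansatz into \eqref{eqn:polyflow}, resolve the resulting identity in the Fourier (eigenvector) basis $\{P_k\}$, and then read off simultaneously which shapes are admissible and which scalar ODE governs the scale factor. Since ``self-similar by scaling'' refers to a homothetic motion, I would take $f\equiv 0$ in \eqref{E:ss}, so that in complex notation $\vec X(t)=g(t)\vec X_0+\vec h(t)$ with $g$ real, $g(0)=1$, and $\vec h(t)=h(t)P_0$ a translation along $P_0=\vec 1$ with $h(0)=0$. Differentiating twice and inserting into \eqref{eqn:polyflow}, I would use Lemma \ref{lem:matrix_properties} (so that $M^m\vec h=h\,M^m\vec 1=\vec 0$) together with the fact that $g$ is a scalar to obtain the single matrix identity $(\ddot g+\beta\dot g)\vec X_0+(\ddot h+\beta\dot h)P_0=g\,(-1)^{m+1}M^m\vec X_0$.

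Next I would expand $\vec X_0=\sum_{k=0}^{n-1}a_kP_k$ in the orthonormal eigenbasis supplied by Proposition \ref{prop:chow}, so that $(-1)^{m+1}M^m\vec X_0=\sum_k a_k\lambda_{m,k}P_k$ with the $\lambda_{m,k}$ of \eqref{eqn:eigenvalues}. Projecting the identity above onto each $P_k$ (using orthogonality) splits it into scalar equations: for $k=0$, where $\lambda_{m,0}=0$, a relation tying $h$ to $a_0$; and for every $k\geq 1$ with $a_k\neq 0$ the equation $\ddot g+\beta\dot g=\lambda_{m,k}\,g$.

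The decisive step, which I expect to be the main obstacle, is showing that only a single conjugate pair of modes can be active. Because $\ddot g+\beta\dot g$ is one fixed function of $t$ and $g\not\equiv 0$ (as $g(0)=1$), any two active indices $k\neq k'$ would force $\lambda_{m,k}\,g=\lambda_{m,k'}\,g$ and hence $\lambda_{m,k}=\lambda_{m,k'}$. Here I would invoke the eigenvalue structure recorded after \eqref{eqn:eigenvalues}: the pairing $\lambda_{m,k}=\lambda_{m,n-k}$ together with the strict inequalities $\lambda_{m,k}<\lambda_{m,1}<0$ for $2\leq k\leq\lfloor n/2\rfloor$ shows the only coincidence possible is $k'=n-k$. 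Thus the active indices lie in one conjugate pair $\{k,n-k\}$ (a single index when $n$ is even and $k=\tfrac n2$), giving $\vec X_0=a_0P_0+a_kP_k+a_{n-k}P_{n-k}$.

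Finally I would dispose of the $P_0$/translation part and solve the scalar ODE. Choosing $\vec h=-a_0\,g\,P_0$, equivalently placing the homothety centre at the origin so that $a_0=0$, reduces $\vec X(t)$ to $g(t)(c_1P_k+c_2P_{n-k})$ with $c_1=a_k$, $c_2=a_{n-k}$, which is the asserted shape. It then remains to solve $\ddot g+\beta\dot g=\lambda_{m,k}\,g$ with $g(0)=1$ by its characteristic roots $r_\pm=-\tfrac\beta2\pm\sqrt{(\tfrac\beta2)^2+\lambda_{m,k}}$: comparing $\beta$ with $2\sqrt{|\lambda_{m,k}|}$ (i.e. the sign of $(\tfrac\beta2)^2+\lambda_{m,k}$) separates the complex-root (oscillatory) case $e^{-\frac\beta2 t}[\cos(\gamma_{m,k}t)+c\sin(\gamma_{m,k}t)]$ from the real-root case $e^{r_+t}+c(e^{r_-t}-e^{r_+t})$, while the degenerate value $\lambda_{m,0}=0$ produces the $k=0$ forms $ct+1$ (for $\beta=0$) and $\tfrac c\beta+(1-\tfrac c\beta)e^{-\beta t}$ (for $\beta>0$). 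This exhausts the four cases in the statement, with the even-$n$, $k=\tfrac n2$ exception already accounted for by the single-index pairing in the previous paragraph.
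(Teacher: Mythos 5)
Your proposal is correct and follows essentially the same route as the paper: substitute the scaling ansatz into \eqref{eqn:polyflow}, use time-independence of the right-hand side to force $g''+\beta g'=Cg$ with $C$ an eigenvalue of $(-1)^{m+1}M^m$ and $\vec{X}_0$ in the corresponding eigenspace $\mathrm{span}\{P_k,P_{n-k}\}$ (your mode-by-mode projection and the paper's ``divide by $g$, the ratio must be constant'' step are the same fact), then solve the characteristic ODE case by case. The only substantive deviation is your inclusion of the translation term $\vec{h}$: the paper reads ``by scaling'' as $\vec{X}(t)=g(t)\vec{X}_0$ with $\vec{h}\equiv 0$, under which your own projection already forces $a_0=0$ whenever a mode with $k\geq 1$ is active (since $a_0(g''+\beta g')=0$ and $g''+\beta g'=\lambda_{m,k}g$ with $g(0)=1$ are incompatible unless $a_0=0$), so the final recentring ``choice'' of $\vec{h}$ --- which in any case is not free, as $\vec{h}(0)=0$ already pins it down --- is unnecessary.
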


We observe from above different behaviour corresponding to the zero eigenvalue $\lambda_{m, 0}.$ In this case, if $\beta=0$ the point $c_1 P_0 + c_2 P_n$ can be moving ($c\ne 0$) or stationary ($c=0$).  Proposition \ref{prop:selfsimilar} shows that each regular polygon $P_k$ is a scaling self-similar solution of (\ref{eqn:polyflow}).  Since the flow \eqref{eqn:polyflow} is invariant under affine transformations, scaling self-similar solutions are in the form of affine transformations of the basis polygons.
For $k>0$ we observe that the self-similar polygon solutions decay with oscillations for small damping coefficient, but for large damping there are no oscillations as they decay.  

\begin{proof}
    Since $\vec{X}(t) = g(t)\vec{X}_0$ is to satisfy equation (\ref{eqn:polyflow}) for all $t$, we use this to find an equivalent expression in terms of $\vec{X}_0.$  We have
    $$\frac{d\vec{X}}{dt} = g'\left( t\right) \vec{X}_0$$
    and
    $$\frac{d^2\vec{X}}{dt^2}= g''\left( t\right) \vec{X}_0$$
    so \eqref{eqn:polyflow} becomes
    $$g''\left( t\right) \vec{X}_0 + \beta g'\left( t\right) \vec{X}_0 = \left( -1\right)^{m+1} g\left( t\right) M^m \vec{X}_0 \mbox{,}$$
    that is,
    $$\left[ \frac{g''\left( t\right)}{g\left( t\right)} + \beta \frac{g'\left( t\right)}{g\left( t\right)}\right] \vec{X}_0 = \left( -1\right)^{m+1} M^m \vec{X}_0 \mbox{.}$$
    The right hand side above is independent of $t$ so the scaling factor $g\left( t\right)$ must satisfy 
  \begin{equation}\label{E:scaling_function_DE}
      g''\left( t\right) + \beta\, g'\left( t\right) - C\, g\left( t\right) = 0
  \end{equation}
    for some constant $C$.  
    
    This being the case, the remaining equation for $\vec{X}_0$ is
    $$C \vec{X}_0 = \left( -1\right)^{m+1} M^m \vec{X}_0 \mbox{.}$$
    For this equation to have a nonzero solution $\vec{X}_0$, it must be that $C$ is an eigenvalue of $\left( -1\right)^{m+1} M^m$.  Thus we have the possibilities $C=\lambda_{m, k}$ for any $k$ with corresponding eigenvectors $P_k$ and $P_{n-k}$. 

    Solving the differential equation  (\ref{E:scaling_function_DE}) with $C=\lambda_{m,k}$ for $k\in \{0,1,\ldots, \lfloor \frac{n}{2} \rfloor\}, \beta$ and the initial condition $g(0)=1$ we obtain the expressions for $g(t)$ as in the statement of the Proposition.
\end{proof}

\begin{prop}\label{prop:rotate_plane}
Consider the family of polygons in the plane, $\vec{X}(t)$ given by 
\begin{equation}\label{eqn:rotateselfsim}
    \vec{X}(t) = \vec{X}_0R(f(t)),
\end{equation}
where $R(f(t))$ represents the rotation of the polygon  $\vec{X}_0$ by angle $f(t),$ with $f(0) = 0.$  
\begin{enumerate}
    \item If $\vec{X}(t)$ satisfies (\ref{eqn:polyflow}) with $\beta = 0$ for all $t,$ then $\vec{X}(t)$ has the form
\begin{equation*}
    \vec{X}(t) = (c_1P_k + c_2P_{n-k})R(\pm \sqrt{-\lambda_{m,k}}t),
\end{equation*}
 for some $k \in \{1,2, \ldots, \lfloor\frac{n}{2}\rfloor\}$ and any constants $c_1, c_2 \in \mathbb{C},$ where $\lambda_{m,k}$ is the corresponding eigenvalue for eigenvectors $P_k$ and $P_{n-k}.$ 
   \item In the case $\beta>0$, there are no nontrivial solutions of \eqref{eqn:polyflow} that evolve by pure rotation.
\end{enumerate}
 \end{prop}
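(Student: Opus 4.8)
The plan is to substitute the pure-rotation ansatz \eqref{eqn:rotateselfsim} into the flow \eqref{eqn:polyflow} and reduce the matrix ODE to a scalar ODE for the rotation angle $f(t)$, exploiting the fact that in the complex picture a rotation is simply multiplication by $e^{if(t)}$. Writing $\vec{X}(t) = e^{if(t)} \vec{X}_0$ as a vector in $\mathbb{C}^n$, I would compute
$$\frac{d\vec{X}}{dt} = i f'(t) e^{if(t)} \vec{X}_0, \qquad \frac{d^2\vec{X}}{dt^2} = \left( i f''(t) - (f'(t))^2 \right) e^{if(t)} \vec{X}_0.$$
Substituting into \eqref{eqn:polyflow} and cancelling the common factor $e^{if(t)}$ gives
$$\left[ i f''(t) - (f'(t))^2 + \beta i f'(t) \right] \vec{X}_0 = (-1)^{m+1} M^m \vec{X}_0.$$
As in the proof of Proposition \ref{prop:selfsimilar}, the right-hand side is a fixed vector while the bracket on the left is a scalar depending only on $t$; for the equation to hold for all $t$ the bracketed scalar must equal a constant eigenvalue, so $\vec{X}_0$ must be an eigenvector of $(-1)^{m+1}M^m$, forcing $\vec{X}_0 = c_1 P_k + c_2 P_{n-k}$ for some $k$ with eigenvalue $\lambda_{m,k}$.

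The key step is then to extract the real and imaginary parts of the resulting scalar condition
$$i f''(t) - (f'(t))^2 + \beta i f'(t) = \lambda_{m,k}.$$
Since $\lambda_{m,k}$ is real, separating parts yields two simultaneous ODEs: the imaginary part gives $f''(t) + \beta f'(t) = 0$ and the real part gives $(f'(t))^2 = -\lambda_{m,k}$. This pair is the heart of the argument and where the two cases diverge.

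For part (1), with $\beta = 0$, the imaginary equation forces $f''(t) = 0$, so $f'(t)$ is constant, and the real equation then gives $(f'(t))^2 = -\lambda_{m,k}$, hence $f'(t) = \pm\sqrt{-\lambda_{m,k}}$. Together with $f(0) = 0$ this yields $f(t) = \pm\sqrt{-\lambda_{m,k}}\,t$, which is exactly the claimed form; the constraint $k \geq 1$ appears because $\lambda_{m,0} = 0$ would give $f' = 0$, i.e.\ no genuine rotation. For part (2), with $\beta > 0$, I expect the main obstacle to be showing the two ODEs are genuinely incompatible for a nontrivial solution. The real equation $(f')^2 = -\lambda_{m,k}$ forces $f'$ to be a nonzero constant whenever $\lambda_{m,k} < 0$ (the only way to get rotation), but a nonzero constant $f'$ makes $f''=0$, whereupon the imaginary equation $f'' + \beta f' = 0$ collapses to $\beta f' = 0$; since $\beta > 0$ this forces $f' = 0$, contradicting rotation. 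The remaining possibility $\lambda_{m,k} = 0$ (the case $k=0$) already corresponds to the trivial point, so no nontrivial pure-rotation solution survives. I would present this as a short contradiction argument, taking care to note that $\vec{X}_0$ being a genuine eigenvector combination (not the zero polygon) is what rules out the degenerate escape routes.
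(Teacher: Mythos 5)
Your proof is correct, and it takes a genuinely different (and in one respect cleaner) route than the paper. The paper works entirely in the real picture: it writes $R(f(t))=\exp(S(f(t)))$ with $S\in\mathfrak{so}(2)$, derives $\vec{X}_0\left[(f''+\beta f')S(1)-(f')^2I_2\right]=(-1)^{m+1}M^m\vec{X}_0$, deduces that $f'\equiv b$ is constant, and then splits into cases: for $\beta=0$ it solves the eigenvalue problem $\left[(-1)^{m+1}M^m+b^2I_n\right]\vec{X}_0=0$ exactly as you do, but for $\beta>0$ it \emph{squares} the matrix identity to obtain an equation in $M^{2m}$, and then shows the relevant determinant is a product of sums of squares, which can only vanish when $b=0$. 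Your complex formulation $\vec{X}(t)=e^{if(t)}\vec{X}_0$ replaces that squaring-and-determinant computation with a one-line observation: the scalar $if''-(f')^2+i\beta f'$ must equal a (necessarily real, since $(-1)^{m+1}M^m$ is real symmetric) eigenvalue $\lambda_{m,k}$, so the imaginary part gives $f''+\beta f'=0$ and the real part gives $(f')^2=-\lambda_{m,k}$, and for $\beta>0$ these are incompatible with $f'\neq 0$. A side benefit of your route is that the constancy of the time-dependent factor is immediate (a nonzero vector in $\mathbb{C}^n$ times a scalar equals a fixed vector forces the scalar to be constant), whereas the paper's corresponding assertion that the $2\times 2$ matrix coefficient must be constant quietly relies on the special "complex-number" structure $aI_2+bS(1)$ of that matrix. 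What the paper's real-matrix formulation buys in exchange is that it generalises directly to higher codimension, where rotations lie in $SO(p)$ and no complex-scalar trick is available --- this is exactly the method reused in Proposition \ref{lem:selfsim_rotate_highcodim}. One small polish for your write-up: state explicitly at the outset that $\vec{X}_0\neq 0$ (otherwise everything is vacuous), and note that in your final case $\lambda_{m,k}=0$ one has $P_n=P_0$, so $c_1P_0+c_2P_n$ is genuinely a single point.
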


 \begin{remark}
   \begin{enumerate}
   \item We did not include $k=0$ in case 1. above because then $\lambda_{m,0}=0$ and $\vec{X}\left( t\right)=\vec{c}$ for a complex constant $c;$ this is just the trivial solution of \eqref{eqn:polyflow}.
   \item It is not surprising there are no solutions evolving by purely rotation with $\beta>0$ as this corresponds physically to a damping term.
   \end{enumerate}
\end{remark}

\begin{proof}
To establish an expression for the rotation matrix $R(f(t))$ in $SO(2)$ we consider the skew symmetric matrix $S:\mathbb{R} \rightarrow \mathfrak{so}(2)$ such that 
\begin{equation*}
    S(f(t)) = 
    \begin{bmatrix}
        0 & -f(t)\\
        f(t) & 0\\
    \end{bmatrix}.
\end{equation*}
Note that $S(f(t)) = f(t)S(1)$ and so $\frac{d}{dt}S(f(t)) = f'(t)S(1)$ and $\frac{d^2}{dt^2}S(f(t)) = f''(t)S(1).$ Considering the map $\exp:\mathfrak{so}(2) \rightarrow SO(2),$ we have $\exp(S(f(t))) = R(f(t)).$ Furthermore 
\begin{equation*}
    \frac{d}{dt}R(f(t)) = f'(t)S(1)R(f(t)) \text{ and } \frac{d^2}{dt^2}R(f(t)) = \left[f'(t)S(1)\right]^2 R(f(t)) + f''(t)S(1)R(f(t)).
\end{equation*}
We also note that $S(1)^2 = -I_2.$ Therefore for $\vec{X}(t) = \vec{X}_0R(f(t))$ to satisfy (\ref{eqn:polyflow}) we have

\begin{equation}\label{E:rotate_arrange}
    \vec{X}_0\left[ (f''(t) + \beta f'(t))S(1) - (f'(t))^2I_2\right] = (-1)^{m+1}M^m\vec{X}_0.
\end{equation}

The right hand side above is independent of $t$  so for a nonzero solution $\vec{X}$ it must be that $(f''(t) + \beta f'(t))S(1) - (f'(t))^2I_2$ is constant for all $t.$  This requires $f'(t) \equiv b$ for some constant $b$ and so $f''(t) \equiv 0.$
Therefore (\ref{E:rotate_arrange}) becomes
\begin{equation}\label{E:rotate_withbeta}
    \vec{X}_0(\beta bS(1) - b^2I_2) = (-1)^{m+1}M^m\vec{X}_0.
\end{equation}

If $\beta =0,$ then the above reduces to $-b^2\vec{X}_0 = (-1)^{m+1}M^m\vec{X}_0,$ and so
\begin{equation}\label{E:rotate_zerobeta}
\left[(-1)^{m+1}M^m + b^2I_n\right]\vec{X}_0 = 0_{n\times 2}.
\end{equation}

For a nontrivial solution $\vec{X}_0$ we require 
\begin{equation*}
    \det\left[(-1)^{m+1}M^m + b^2I_n\right] = \prod_{k=0}^{\lfloor\frac{n}{2}\rfloor}(\lambda_{m,k} +b^2) = 0.
\end{equation*}
Hence $b= \pm \sqrt{-\lambda_{m,k}}$ for some $k\in \{0,1,\ldots, \lfloor\frac{n}{2}\rfloor\}.$  Given that $-b^2 = \lambda_{m,k},$ the null space of $(-1)^{m+1}M^m + b^2I_n$ is spanned by $P_k$ and $P_{n-k}.$ Therefore $\vec{X}_0 = c_1P_k + c_2P_{n-k}$ for complex constants $c_1, c_2$ and the rate of rotation $f(t)$ is given by $f(t) = \pm \sqrt{-\lambda_{m,k}}t$ in this case. Note that for $k=0$ we have $\lambda_{m,0} = 0$ and $P_0 = (1,\ldots, 1)^T$ and so $f(t)\equiv 0$ and $\vec{X}_0$ is a constant vector. 

We now consider the case $\beta > 0.$ From (\ref{E:rotate_withbeta}) we have
\begin{equation} \label{E:ssr}
    \vec{X}_0[\beta bS(1) - b^2I_2]^2  = (b^4 - \beta b^2)\vec{X}_0 - 2\beta b^3 \vec{X}_0S(1) = M^{2m}\vec{X}_0.
\end{equation}
Letting $\vec{X}_0 = [\vec{x}\  \vec{y}]$ we therefore have
\begin{equation*}
 (b^4 - \beta^2 b^2)[\vec{x}\  \vec{y}] - 2\beta b^3[\vec{y}\ -\vec{x}] = M^{2m}[\vec{x}\  \vec{y}]
\end{equation*}
which, with a rearrangement is equivalent to

\begin{equation*}
\left[(M^{2m} - (b^4 - \beta^2b^2)I_n)^2 + 4\beta^2b^6\right]\vec{X}_0 = 0_{n\times 2}.
\end{equation*}
For a non trivial $\vec{X}_0$ we require
\begin{equation*}
    \det\left\{ \left[ M^{2m} - (b^4 - \beta^2b^2)I_n\right]^2 + 4\beta^2b^6\right\} = \prod_{k=0}^{\lfloor\frac{n}{2}\rfloor}\left\{ \left[\lambda_{m,k}^2 - (b^4 - \beta^2b^2)\right]^2 + 4\beta^2b^6\right\} = 0.
\end{equation*}
Each factor in the product above is the sum of two squares.  Since $\beta>0$, the only chance of a zero factor is if $b=0$ which implies $f\left( t\right) \equiv 0$.  Substituting $b=0$ back into \eqref{E:ssr} we find only the trivial solution $\vec{X}(t) = \vec{c}$ for any complex constant $c.$
\end{proof}

\begin{prop}\label{prop:selfsim_plane_translate}
  Consider the family of polygons in the plane, $\vec{X}(t)$ given by 
  \begin{equation}\label{eqn:translateselfsim2}
    \vec{X}(t) = \vec{X}_0 + \vec{h}(t),
\end{equation}
where $\vec{h}(t)$ represents the translation of the polygon $\vec{X}_0$ and $\vec{h}(0) = \vec{0}.$  If $\vec{X}(t)$ satisfies (\ref{eqn:polyflow}) for all $t$ then $\vec{X}_0$ corresponds to a single point in the plane. That is, there are no non-trivial self-similar solutions by translation under the semi-discrete polyharmonic flow.  
\end{prop}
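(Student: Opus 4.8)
The plan is to substitute the translational ansatz \eqref{eqn:translateselfsim2} directly into the flow \eqref{eqn:polyflow} and exploit the fact that $M^m$ annihilates translations. Working in complex notation, a pure translation has all rows equal, so I write $\vec{h}(t) = h(t)\vec{1}$ with $\vec{1} = (1,\ldots,1)^T$ and a scalar function $h : \mathbb{R} \to \mathbb{C}$ satisfying $h(0)=0$. Then $\frac{d\vec{X}}{dt} = h'(t)\vec{1}$ and $\frac{d^2\vec{X}}{dt^2} = h''(t)\vec{1}$, while on the right-hand side $M^m\vec{h}(t) = h(t)\, M^m\vec{1} = \vec{0}$ by part 1 of Lemma \ref{lem:matrix_properties}. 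Hence \eqref{eqn:polyflow} collapses to the identity $(-1)^{m+1}M^m\vec{X}_0 = \big(h''(t) + \beta h'(t)\big)\vec{1}$.

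The key observation is now that the left-hand side is a fixed vector, independent of $t$, while the right-hand side is a time-dependent scalar multiple of $\vec{1}$. Equating them forces $h''(t) + \beta h'(t)$ to be a constant $c_0 \in \mathbb{C}$, so that $(-1)^{m+1}M^m\vec{X}_0 = c_0\vec{1}$ is a vector with all entries equal. Splitting $\vec{X}_0$ into its real and imaginary parts and applying Lemma \ref{lem:constant_vec} to each component vector, I conclude that $c_0 = 0$, and therefore $(-1)^{m+1}M^m\vec{X}_0 = \vec{0}$. Finally, Lemma \ref{lem:nullspace_matrix}, again applied to real and imaginary parts, characterises the null space as exactly the constant vectors, so $\vec{X}_0 = c\,\vec{1}$ for some $c \in \mathbb{C}$; that is, every vertex of $\vec{X}_0$ coincides at the single point $c$ in the plane, which is the assertion. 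The residual equation $h''(t) + \beta h'(t) = 0$ merely records the rigid translational motion of that point and plays no role in the conclusion.

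I expect no serious obstacle here: the argument is the same ``annihilation, then null space'' pattern already used for the scaling and rotational self-similar cases. The only point requiring genuine care is the recognition that $\vec{h}'' + \beta\vec{h}'$ is itself a pure translation — a scalar multiple of $\vec{1}$ — since this is precisely what allows the constant-vector hypothesis of Lemma \ref{lem:constant_vec} to apply and then forces the constant to vanish. Everything else reduces to the elementary computation of the two time derivatives of $\vec{h}(t)$ and the bookkeeping of passing between the complex and real-and-imaginary descriptions of a planar polygon.
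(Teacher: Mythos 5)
Your proposal is correct and follows essentially the same route as the paper: substitute the translation ansatz, use that $M^m$ annihilates constant vectors so the equation reads $\vec{h}''+\beta\vec{h}' = (-1)^{m+1}M^m\vec{X}_0$, deduce from time-independence of the right side that $h''+\beta h'$ is a constant, and then invoke Lemma \ref{lem:constant_vec} to kill that constant and Lemma \ref{lem:nullspace_matrix} to conclude $\vec{X}_0$ is a single point. Your version is in fact slightly more careful than the paper's in separating the two lemma applications and in noting the real/imaginary-part splitting needed because those lemmas are stated for vectors in $\mathbb{R}^n$.
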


\begin{proof}
Since $\vec{h}(t)$ translates each vertex of $\vec{X}_0$ in the same way, it follows that $\vec{h}(t)$ must be a vector in $\mathbb{C}^n$ with every entry equal to the same function of $t.$ Since (\ref{eqn:translateselfsim2}) is to satisfy (\ref{eqn:polyflow}), it follows that $\vec{h}\left( t\right)$ must satisfy
\begin{equation*}
    \vec{h}''(t) + \beta \vec{h}'\left( t\right) = (-1)^{m+1}M^m \left[ \vec{X}_0 + \vec{h}(t) \right]
    = (-1)^{m+1}M^m\vec{X}_0.
\end{equation*}
Again, the right hand side is independent of $t$, so each element $h(t)$ of $\vec{h}\left( t\right)$ must satisfy
$$h''\left( t\right) + \beta\, h'\left( t\right) = C$$
for some complex constant $C$.  In view of Lemmas \ref{lem:constant_vec} and \ref{lem:nullspace_matrix}, the only solutions of 
$$(-1)^{m+1}M^m \vec{X}_0 = \vec{C}$$
are the constants $\vec{X}_0 = \vec{a}_0$, corresponding to trivial solutions of \eqref{eqn:polyflow}.
 \end{proof}

\noindent \emph{Remark:}  
One can solve the ODE for $h$ to obtain the expression for the path of translating point in both the $\beta=0$ and $\beta>0$ cases.  The results are, for $\beta=0$,
$$h\left( t\right) = d\, t$$
and for $\beta>0$,
$$h\left( t\right) = \frac{d}{\beta}\left( 1 - e^{-\beta \, t}\right)$$
for constant $d$ not equal to zero.

 \subsection{Planar solutions for general initial data}\label{sec:flow_solutions}

In this subsection we move from considering those solutions to \eqref{eqn:polyflow} that move self-similarly to general solutions given specified initial data.  As the governing equation is hyperbolic there are natural analogues of initial position and velocity; the former is clear in our setting while the latter gives rise to several options.  Consequently we specify our results in this section for given initial polygon $\vec{X}_0$ only and include free parameters that can be determined from an appropriate additional condition.  Two possible such conditions are outlined in the remark below.

\begin{theorem} \label{thm:planenodamp}
Given an initial polygon $\vec{X}_0 = \sum_{k=0}^{n-1} \alpha_k^0 P_k$ with $n$ vertices in $\mathbb{R}^2$ and any $m\in \mathbb{N}$, the equation \eqref{eqn:polyflow} with $\beta=0$ and initial data $\vec{X}\left(0 \right) = \vec{X_0}$ has a unique solution given by
\begin{equation} \label{E:flowsolnnodamp}
  \vec{X}(t) = \left( \alpha_0^0 + a_0\, t\right) P_0 + \sum_{k=1}^{n-1} \left[ \alpha_k^0 \cos\left( \sqrt{- \lambda_{m, k}} t\right) + a_k \sin \left( \sqrt{- \lambda_{m, k}} t\right) \right] P_k,
\end{equation}
where the $a_k$, $k=0, \ldots, n-1$ are arbitrary constants.
\end{theorem}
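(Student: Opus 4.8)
The plan is to diagonalise the flow by expanding everything in the eigenbasis $\{P_k\}_{k=0}^{n-1}$ furnished by Proposition \ref{prop:chow}, thereby reducing the coupled system \eqref{eqn:polyflow} to $n$ independent scalar second-order ODEs. Since the $P_k$ span $\mathbb{C}^n$, I would write any candidate solution as $\vec{X}(t) = \sum_{k=0}^{n-1} \alpha_k(t) P_k$ with time-dependent coefficients $\alpha_k(t) \in \mathbb{C}$, observing that the initial-position hypothesis $\vec{X}(0) = \vec{X}_0 = \sum_k \alpha_k^0 P_k$ forces $\alpha_k(0) = \alpha_k^0$.

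Substituting this expansion into \eqref{eqn:polyflow} with $\beta = 0$ and using the eigenvalue relation $(-1)^{m+1}M^m P_k = \lambda_{m,k} P_k$ together with formula \eqref{eqn:eigenvalues}, the left-hand side becomes $\sum_k \alpha_k''(t) P_k$ while the right-hand side becomes $\sum_k \lambda_{m,k}\,\alpha_k(t) P_k$. By linear independence of the $P_k$, matching coefficients decouples the system into the scalar equations
$$\alpha_k''(t) = \lambda_{m,k}\, \alpha_k(t), \qquad k = 0, 1, \ldots, n-1.$$
This decoupling is the real content of the proof; everything that follows is elementary.

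Next I would solve these equations, splitting on the sign of the eigenvalue. For $k=0$ we have $\lambda_{m,0}=0$, so $\alpha_0'' = 0$ integrates to the affine function $\alpha_0(t) = \alpha_0^0 + a_0\, t$. For each $k \geq 1$ we have $\lambda_{m,k} < 0$ (recorded just before Lemma \ref{lem:constant_vec}), so the equation $\alpha_k'' = \lambda_{m,k}\alpha_k$ is oscillatory with general solution $\alpha_k(t) = \alpha_k^0 \cos\!\left(\sqrt{-\lambda_{m,k}}\, t\right) + a_k \sin\!\left(\sqrt{-\lambda_{m,k}}\, t\right)$. Here the cosine coefficient is pinned to $\alpha_k^0$ precisely by $\alpha_k(0) = \alpha_k^0$, while the sine coefficient $a_k$ is left free; assembling these modes reproduces \eqref{E:flowsolnnodamp}.

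For uniqueness I would appeal to the standard theory for linear constant-coefficient systems already invoked in the remarks following \eqref{eqn:polyflow}: prescribing $\vec{X}(0)$ together with $\vec{X}'(0)$ determines a unique solution, and in the eigenbasis the velocity data is exactly $\alpha_0'(0) = a_0$ and $\alpha_k'(0) = \sqrt{-\lambda_{m,k}}\, a_k$ for $k \geq 1$, so the $n$ constants $a_k$ parametrise this freedom bijectively. Thus \eqref{E:flowsolnnodamp} is a solution for every choice of the $a_k$ and the only solution once that data is fixed. I do not anticipate a genuine obstacle, since the problem is linear with constant coefficients; the only points requiring care are the exceptional zero mode $k=0$, which produces drift linear in $t$ rather than an oscillation, and making the phrase \emph{unique solution with arbitrary constants} precise, namely that the $a_k$ encode the unspecified initial velocity.
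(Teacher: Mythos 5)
Your proposal follows essentially the same route as the paper's proof: expand $\vec{X}(t)=\sum_{k=0}^{n-1}\alpha_k(t)P_k$ in the eigenbasis of $(-1)^{m+1}M^m$, decouple \eqref{eqn:polyflow} into the scalar equations $\alpha_k''=\lambda_{m,k}\alpha_k$, and solve the zero mode affinely and the $k\geq 1$ modes as oscillations pinned by $\alpha_k(0)=\alpha_k^0$. Your added observation that the free constants $a_k$ correspond bijectively to the unspecified initial velocity data is a correct and slightly more explicit account of the ``arbitrary constants'' than the paper gives in its proof, matching what it defers to the subsequent remark.
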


\begin{proof}
The proof is similar to the parabolic flow cases in  \cite{chow2007semidiscrete} for $m=1$ and in \cite{MM24} for general $m$.  The set of eigenvectors $\{P_k\}_{k=0}^{n-1}$ of $(-1)^{m+1}M^m$ forms a basis for $\mathbb{C}^n$. Therefore by considering $\vec{X}(t) \in \mathbb{C}^n$ we can write our polygon in the form 
\begin{equation} \label{E:Xform}
  \vec{X}(t) = \sum_{k=0}^{n-1}\alpha_k(t)P_k
  \end{equation}
  where the coefficients $\alpha_k(t)$ are complex.  We have 
$$\frac{d\vec{X}}{dt}  = \sum_{k=0}^{n-1} \alpha_k'\left( t\right) P_k$$
and
$$\frac{d^2\vec{X}}{dt^2}  = \sum_{k=0}^{n-1}\alpha_k''\left( t\right)  P_k \mbox{.}$$
Since $\vec{X}(t)$ satisfies (\ref{eqn:polyflow}) with $\beta=0$, this gives\begin{equation*}
    \sum_{k=0}^{n-1} \alpha_k''\left( t\right) P_k  
     = (-1)^{m+1}M^m\sum_{k=0}^{n-1}\alpha_k(t)P_k
     = \sum_{k=0}^{n-1}\alpha_k(t)\lambda_{m,k}P_k.
\end{equation*}
Hence
$$\alpha_k''(t) = \lambda_{m, k} \alpha_k(t)$$
for each $t$ which implies
$$\alpha_0(t) = c_0 + a_0\, t$$
while for the other $k$,
$$\alpha_k(t) = c_k\, \cos \left( \sqrt{- \lambda_{m, k}} t\right) + a_k \sin \left( \sqrt{- \lambda_{m, k}} t\right) \mbox{,}$$
for constants $c_k, a_k, k=0, 1, \ldots, n-1$.  The result follows in view of the initial coefficients.
\end{proof}

\begin{remark}
    The appearance of arbitrary constants $a_0, \ldots a_{n-1}$ in the solution formula \eqref{E:flowsolnnodamp} is not surprising.  Prescribing only the initial polygon does not give provide enough information to solve \eqref{eqn:polyflow} uniquely.  There are of course many ways extra information can be given yielding a unique solution.  Two of these are 
    \begin{enumerate}
        \item A `zero initial velocity' condition.  Clearly, in view of \eqref{E:flowsolnnodamp}, this will result in $a_0=a_1=\ldots = a_{n-1}=0$.
        \item A `prescribed polygon at later time' condition.  In other words, not only do we specify the initial polygon, but we also specify another polygon at some later time.  In view of the arguments of the sine functions in \eqref{E:flowsolnnodamp}, this can be messy in general, however, suppose for a specific example we required
        $$\vec{X}\left( \frac{\pi}{2\sqrt{-\lambda_{m, 1}}} \right) = P_0 + P_1 \mbox{.}$$
        From \eqref{E:flowsolnnodamp}, we must therefore have
        $$\alpha_0^0 + a_0 \frac{\pi}{2\sqrt{-\lambda_{m, 1}}} = 1 \mbox{, } 
          a_1 = 1 \mbox{ and } \alpha^0_k \cos \left( \frac{\pi}{2}\sqrt{ \frac{\lambda_{m, k}}{\lambda_{m, 1}}}\right) +  a_k\sin \left( \frac{\pi}{2}\sqrt{ \frac{\lambda_{m, k}}{\lambda_{m, 1}}}\right) = 0,\text{ for } k= 2, \ldots, n-1 \mbox{.}$$
        This kind of specification of $\vec{X}$ at another time is also a kind of approach to the discrete Yau problem where we consider the solution only on a finite time interval.

    \end{enumerate}
    \end{remark}

    Next we consider the case of $\beta>0$.  The damping ensures convergence to a point that, under certain conditions and under appropriate rescaling, is an affine transformation of a regular polygon.  This is fundamentally different behaviour from the undamped case where the solution \eqref{E:flowsolnnodamp} exhibits continued undamped oscillations.

\begin{theorem}\label{thm:planedamp}
Given an initial polygon $\vec{X}_0 = \sum_{k=0}^{n-1} \alpha_k^0 P_k$ with $n$ vertices in $\mathbb{R}^2$ and any $m\in \mathbb{N}$, the equation \eqref{eqn:polyflow} with $\beta >0$ constant and initial data $\vec{X}\left(0 \right) = \vec{X}_0$ has a unique solution given by
\begin{equation} \label{E:flowsolndamp}
  \vec{X}(t) = \left[ \alpha_0^0 + \frac{a_0}{\beta} - \frac{a_0}{\beta} e^{-\beta\, t} \right] P_0 + \sum_{k=1}^{n-1}\alpha_k(t) P_k,
\end{equation}

where
\begin{equation} \label{E:alphak}
  \alpha_k\left( t\right) = \begin{cases}
        \alpha^0_k e^{r_{+m, k} t} + a_k \left(e^{r_{-m, k} t} - e^{r_{+m, k} t}\right) &\mbox{ for } \left| \lambda_{m, k} \right| < \frac{\beta^2}{4}\\
        (\alpha^0_k + a_k t) e^{r_{m, k} t} &\mbox{ if } \lambda_{m, k} = - \frac{\beta^2}{4}\\
        e^{-\frac{\beta}{2} t} \left[ \alpha^0_k \cos \left( \gamma_{m, k} t \right) + a_k \sin\left( \gamma_{m, k} t\right) \right],     &\mbox{ for } \left| \lambda_{m, k} \right| > \frac{\beta^2}{4}    
  \end{cases}
  \end{equation}
 $$r_{\pm m, k} = - \frac{\beta}{2} \pm \sqrt{\left( \frac{\beta}{2}\right)^2 + \lambda_{m, k}} $$ and $$\gamma_{m, k} = \sqrt{-\lambda_{m, k} - \left( \frac{\beta}{2}\right)^2} \mbox{.}$$
  The constants $a_k$ for $k = 0,\ldots, n-1$ are arbitrary. The solution exists for all time and converges exponentially to a point.  

When the dominant eigenvalue $\lambda_{m,d}$ for some $d\in \{1,\ldots, \lfloor \frac{n}{2}\rfloor\}$ in the expression of $\vec{X}_0$ satisfies the condition $|\lambda_{m,d}| \leq \frac{\beta^2}{4},$ then under appropriate rescaling, the solution is asymptotic as $t\rightarrow \infty$ to an affine transformation of a regular polygon with $n$ vertices. Otherwise if $|\lambda_{m,d}| > \frac{\beta^2}{4},$ then the solution in (\ref{E:flowsolndamp}) exhibits continued oscillating behaviour as it shrinks to a point.
\end{theorem}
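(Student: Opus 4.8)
The plan is to follow the same eigenbasis decoupling used in the undamped case of Theorem \ref{thm:planenodamp}. First I would write $\vec{X}(t) = \sum_{k=0}^{n-1}\alpha_k(t)P_k$ in the eigenbasis $\{P_k\}$ of $(-1)^{m+1}M^m$ from Proposition \ref{prop:chow}, substitute into \eqref{eqn:polyflow}, and use $(-1)^{m+1}M^m P_k = \lambda_{m,k}P_k$ together with the linear independence of the $P_k$ to obtain the decoupled scalar system $\alpha_k''(t) + \beta\alpha_k'(t) - \lambda_{m,k}\alpha_k(t) = 0$. Each is a constant-coefficient linear ODE with characteristic roots $r_{\pm m,k} = -\frac{\beta}{2}\pm\sqrt{(\frac{\beta}{2})^2 + \lambda_{m,k}}$. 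Since $\lambda_{m,0}=0$ and $\lambda_{m,k}<0$ for $k\geq 1$, the qualitative form of the solution splits according to the sign of $(\frac{\beta}{2})^2 + \lambda_{m,k}$, that is, according to whether $|\lambda_{m,k}|$ is less than, equal to, or greater than $\frac{\beta^2}{4}$: distinct negative real roots, a repeated root, or a complex conjugate pair with real part $-\frac{\beta}{2}$. Imposing $\alpha_k(0)=\alpha_k^0$ while leaving $\alpha_k'(0)$ free (absorbed into the constant $a_k$) and solving each case yields precisely \eqref{E:alphak}, with the $k=0$ equation integrating directly to $\alpha_0^0 + \frac{a_0}{\beta} - \frac{a_0}{\beta}e^{-\beta t}$. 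Uniqueness is then standard once the initial polygon and a second velocity-type condition are specified.

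For existence for all time and convergence to a point I would observe that for every $k \geq 1$ the relevant exponents have strictly negative real part: in the real-root cases both $r_{\pm m,k} < 0$ because $\sqrt{(\frac{\beta}{2})^2 + \lambda_{m,k}} < \frac{\beta}{2}$ when $\lambda_{m,k}<0$, while in the complex case the real part is $-\frac{\beta}{2}<0$. Hence every $\alpha_k(t)$ with $k \geq 1$ decays exponentially to $0$, whereas $\alpha_0(t) \to \alpha_0^0 + \frac{a_0}{\beta}$. Thus $\vec{X}(t) \to (\alpha_0^0 + \frac{a_0}{\beta})P_0$, a single point, and since every nonconstant exponent present is strictly negative the convergence is exponential.

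The delicate part is the rescaled asymptotic shape. I would first recentre by subtracting the limit point, writing $\tilde{X}(t) = \vec{X}(t) - (\alpha_0^0 + \frac{a_0}{\beta})P_0$, so the $P_0$ contribution decays at rate $-\beta$. I then take the \emph{dominant} eigenvalue $\lambda_{m,d}$ to be the one of smallest magnitude among the modes actually present in $\tilde{X}$; by the ordering $|\lambda_{m,1}| < |\lambda_{m,k}|$ recorded after \eqref{eqn:eigenvalues} this is the slowest-decaying mode. The key comparison is that for an over- or critically damped mode the dominant exponent $r_{+m,k} = -\frac{\beta}{2} + \sqrt{(\frac{\beta}{2})^2+\lambda_{m,k}}$ lies in $(-\frac{\beta}{2},0)$ (equal to $-\frac{\beta}{2}$ in the critical case), whereas every underdamped mode decays exactly like $e^{-\frac{\beta}{2}t}$. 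Hence if $|\lambda_{m,d}| \leq \frac{\beta^2}{4}$ then mode $d$, together with its conjugate mode $n-d$ sharing the eigenvalue $\lambda_{m,d}=\lambda_{m,n-d}$, strictly dominates all others, and rescaling $\tilde{X}(t)$ by $e^{-r_{+m,d}t}$ (or by $\frac{1}{t}e^{\frac{\beta}{2}t}$ in the critical case, to absorb the polynomial factor) sends every subdominant term to $0$ and leaves a nonzero limit of the form $c_1 P_d + c_2 P_{n-d}$, which by the discussion in Section \ref{sec:selfsim_solutions} is an affine transformation of a regular $n$-gon. Conversely, if $|\lambda_{m,d}| > \frac{\beta^2}{4}$ then every present mode is underdamped, so after rescaling by $e^{\frac{\beta}{2}t}$ each surviving coefficient becomes $\alpha_k^0\cos(\gamma_{m,k}t) + a_k\sin(\gamma_{m,k}t)$, which does not converge, yielding the stated continued oscillation.

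The main obstacle I anticipate is the decay-rate bookkeeping in this last step: one must verify that the chosen rescaling exponent strictly dominates every other exponent present, including the $-\beta$ rate of the recentred $P_0$ term and the $-\frac{\beta}{2}$ rate of the underdamped modes, so that the limit is exactly the dominant pair and is nonzero. The critical case $|\lambda_{m,d}| = \frac{\beta^2}{4}$ needs separate care because of the extra factor of $t$, and one should also flag the degenerate possibility $d = \frac{n}{2}$ (even $n$), where the ``polygon'' $P_{n/2}$ collapses to a doubly-traced segment; otherwise the argument goes through verbatim.
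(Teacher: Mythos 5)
Your proposal is correct and takes essentially the same approach as the paper's proof: expansion in the eigenbasis $\{P_k\}$, the decoupled damped-oscillator equations $\alpha_k'' + \beta\,\alpha_k' = \lambda_{m,k}\alpha_k$ solved in the three cases, exponential decay of all modes $k\geq 1$ giving convergence to a point, and rescaling at the rate of the dominant present mode (with $e^{-r_{+m,d}t}$, $\tfrac{1}{t}e^{\beta t/2}$, or $e^{\beta t/2}$ respectively) to extract the limiting affine image of $P_d$ or the persistent oscillation. The only cosmetic difference is that you recentre by the constant limit point rather than the paper's time-dependent $P_0$ coefficient, and you correctly verify that the resulting residual still vanishes under the rescaling.
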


\noindent \emph{Remark:} In view of \eqref{E:alphak}, all modes of the solution are exponentially decaying except for $P_0$.  

\begin{proof}
Again writing  $\vec{X}(t) = \sum_{k=0}^{n-1}\alpha_k(t)P_k$, since $\vec{X}(t)$ satisfies (\ref{eqn:polyflow}) with $\beta>0$, this gives
\begin{equation*}
    \sum_{k=0}^{n-1}\left[ \alpha_k''\left( t\right) + \beta \alpha_k'\left( t\right) \right] P_k  
     = (-1)^{m+1}M^m\sum_{k=0}^{n-1}\alpha_k(t)P_k
     = \sum_{k=0}^{n-1}\alpha_k(t)\lambda_{m,k}P_k.
\end{equation*}

Therefore for each $k$,
$$\alpha_k''\left( t\right) + \beta \, \alpha_k'\left( t\right)  = \lambda_{m,k}\, \alpha_k(t).$$ 

Thanks Solving the above equation for different cases of the eigenvalues $\lambda_{m,k}$ in relation to $\beta,$ solves the coefficients $\alpha_k(t)$ as given in (\ref{E:alphak}).

Since $\beta > 0$ and each $\lambda_{m,k} <0$ for non zero $k,$ then each $\alpha_k(t)$ goes to zero as $t \to \infty$ for each $k = 1,\ldots, n-1.$  Therefore 
\begin{equation*}
    \lim_{t\to \infty}\vec{X}(t) = \left[\alpha_0^0+ \frac{a_0}{\beta}\right]P_0,
\end{equation*}

that is, each vertex of the polygon converges to the same point, by $\alpha_0^0+ \frac{a_0}{\beta}$ where $\alpha_0^0$ is the complex constant given by the initial polygon, and $a_0$ is an arbitrary complex constant.

To determine the limiting shape of the polygon, we consider an appropriately scaled and translated version of $\vec{X}(t)$ given by 
\begin{equation*}
    \vec{Y}(t) = g(t)\left(\vec{X}(t) - \left[\alpha_0^0+ \frac{a_0}{\beta} - \frac{a_0}{\beta}e^{-\beta t}\right]P_0\right).
\end{equation*}

The scaling factor $g(t)$ is determined by the value of the damping term $\beta$ and the relationship of the eigenvalues $\lambda_{m,k},$ to this damping term, such that this therefore determines the coefficient terms as given in (\ref{E:alphak}).

Suppose that for the dominant eigenvalue $\lambda_{m,1},$ we have $|\lambda_{m,1}| < \frac{\beta^2}{4}.$ Then we choose the scaling factor to be 
\begin{equation*}
    g(t) = e^{-r_{+m,1}t} = e^{\left(\frac{\beta}{2} -\sqrt{\frac{\beta^2}{4} + \lambda_{m,1}}\right)t}.
 \end{equation*}

Note that for $k = 2,\ldots, n-1$ we have
$$r_{\pm m,k} - r_{+m,1} = \pm \sqrt{\frac{\beta^2}{4}+ \lambda_{m,k}} - \sqrt{\frac{\beta^2}{4}+ \lambda_{m,1}} < 0$$ and 
$$-\frac{\beta}{2} - r_{+m,1} = - \sqrt{\frac{\beta^2}{4} + \lambda_{m,1}} < 0.$$

Therefore for $k =2\ldots, n-1$ the expression $e^{-r_{+m,1}t}\alpha_k(t)$ will go to zero as $t\to \infty$ and we have 
\begin{equation*}
    \lim_{t\to \infty}\vec{Y}(t) = (\alpha_1^0 - a_1)P_1 + (\alpha_{n-1}^0 - a_{n-1})P_{n-1}.
\end{equation*}
Therefore $\vec{Y}(t)$ converges to an affine transformation of $P_1.$

If we have the condition $\lambda_{m,1} = \frac{\beta^2}{4},$
then we choose the scaling factor $g(t) = \frac{e^{\frac{\beta}{2}t}}{t}.$
We therefore obtain

\begin{equation*}
    \lim_{t\to \infty}\vec{Y}(t) = a_1P_1 + a_{n-1}P_{n-1},
\end{equation*}
where the limiting shape is again an affine transformation of $P_1.$

In the case of the original polygon being orthogonal to $P_1,$ then we instead consider the next dominant eigenvalue $\lambda_{m,d}$ for some $d\in \{2,\ldots \lfloor \frac{n}{2} \rfloor\}$ where the initial polygon is not orthogonal to $P_d.$ The scaling factor $g(t)$ is therefore chosen in the same way as described above, but instead involving $\lambda_{m,d}.$

In the case of the dominant eigenvalue with property $|\lambda_{m,d}|>\frac{\beta^2}{4}$ then taking the scaling factor to be $g(t) = e^{\frac{\beta }{2}t}$ gives
$$\vec{Y}(t) = \sum_{k=1}^{n-1}\left[ \alpha^0_k \cos \left( \gamma_{m, k} t \right) + a_k \sin\left( \gamma_{m, k} t\right) \right].$$ Therefore in this case we have oscillating behaviour of the polygon as it shrinks to a point. 
\end{proof}

\begin{remark}
As we did earlier for the undamped case, we can supplement the initial condition in Theorem \ref{thm:planenodamp} with an additional condition to ensure a unique solution.  In the case of prescribing a second polygon at a later time, we create a hyperbolic flow that begins with one polygon, transitions through a second polygon at some fixed time and then converges exponentially to a point that, under rescaling, is an affine transformation of a regular polygon for certain values of $\beta$.
\end{remark}

Figure \ref{fig:general_n=5} depicts the evolution of a pentagon under the semi-discrete hyperbolic polyharmonic flow at a series of time steps for select values of $m.$ A damping term of $\beta=4$ is included. The figures demonstrate the behaviour of the flow as the polygon shrinks and converges to an affine transformation of the regular polygon. In the case of $m=1$ we have $|\lambda_{m,k}|<\frac{\beta^I 2}{4}$ for all eigenvalues. In the case of $m=2$ and $m=3,$ only the dominant eigenvalue $\lambda_{m,1}$ satisfies this condition, and so the terms in our solution that involve the eigenvectors associated with non-dominant eigenvalues, include coefficients with oscillating expressions as set out in (\ref{E:alphak}). Convergence is faster for higher $m.$

Figure \ref{fig:general_n=5_intermediate} depicts the evolution of the same pentagon as given in Figure \ref{fig:general_n=5} under the semi-discrete hyperbolic polyharmonic flow at the same overlay-ed time steps and for select values of $m.$ In this case however, non zero arbitrary coefficients $a_k$ are prescribed such that the polygon flows to a specific polygon at a particular time value, before shrinking to a point. In the examples given in Figure \ref{fig:general_n=5_intermediate}, the prescribed intermediate polygon is depicted in red.

\begin{figure*}

  \begin{subfigure}[b]{0.31\textwidth}
    \includegraphics[width=\textwidth]{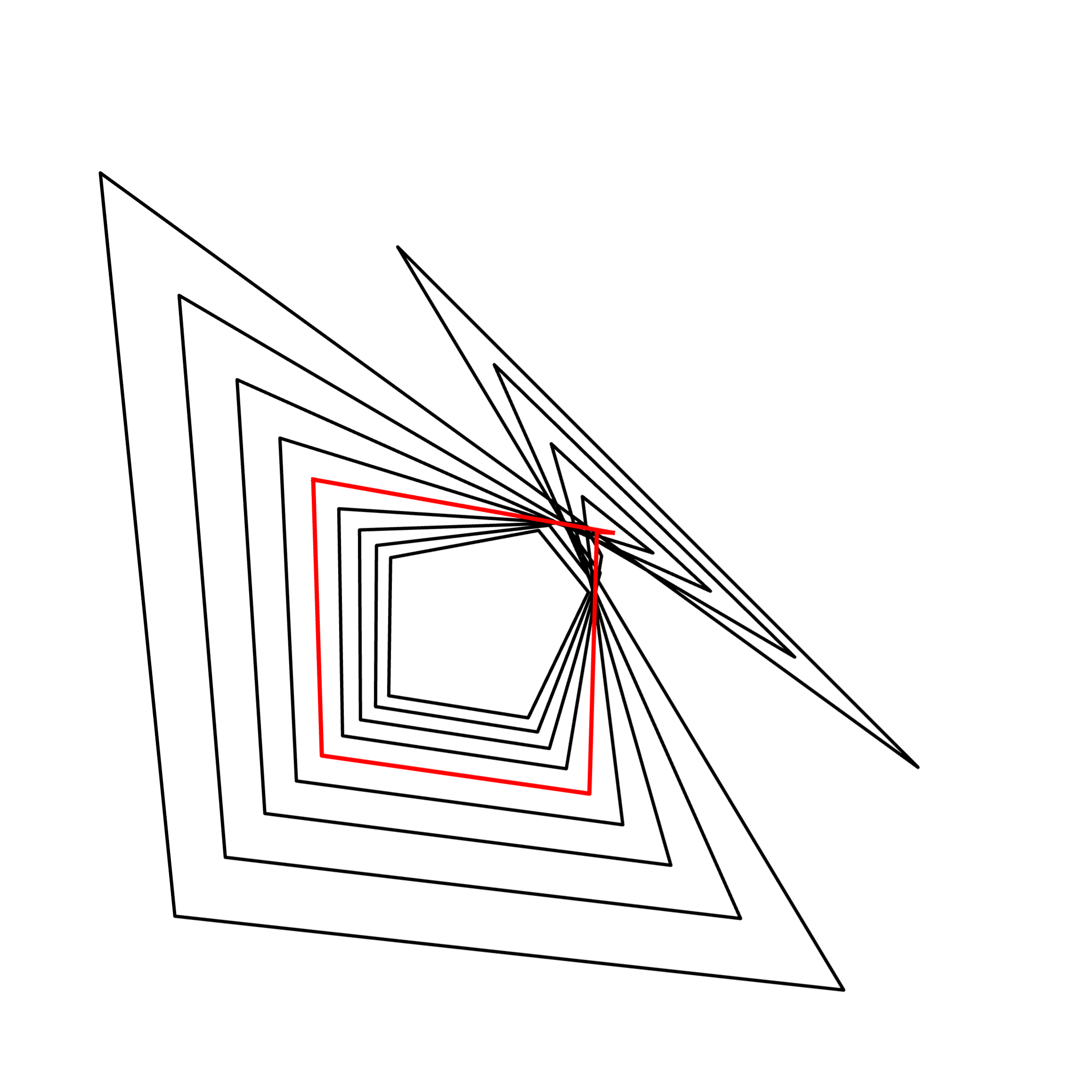}
    \caption{$m=1, \beta = 4$}
    \label{fig:general_n=5_m=1}
  \end{subfigure}
  \hfill
  \begin{subfigure}[b]{0.31\textwidth}
    \includegraphics[width=\textwidth]{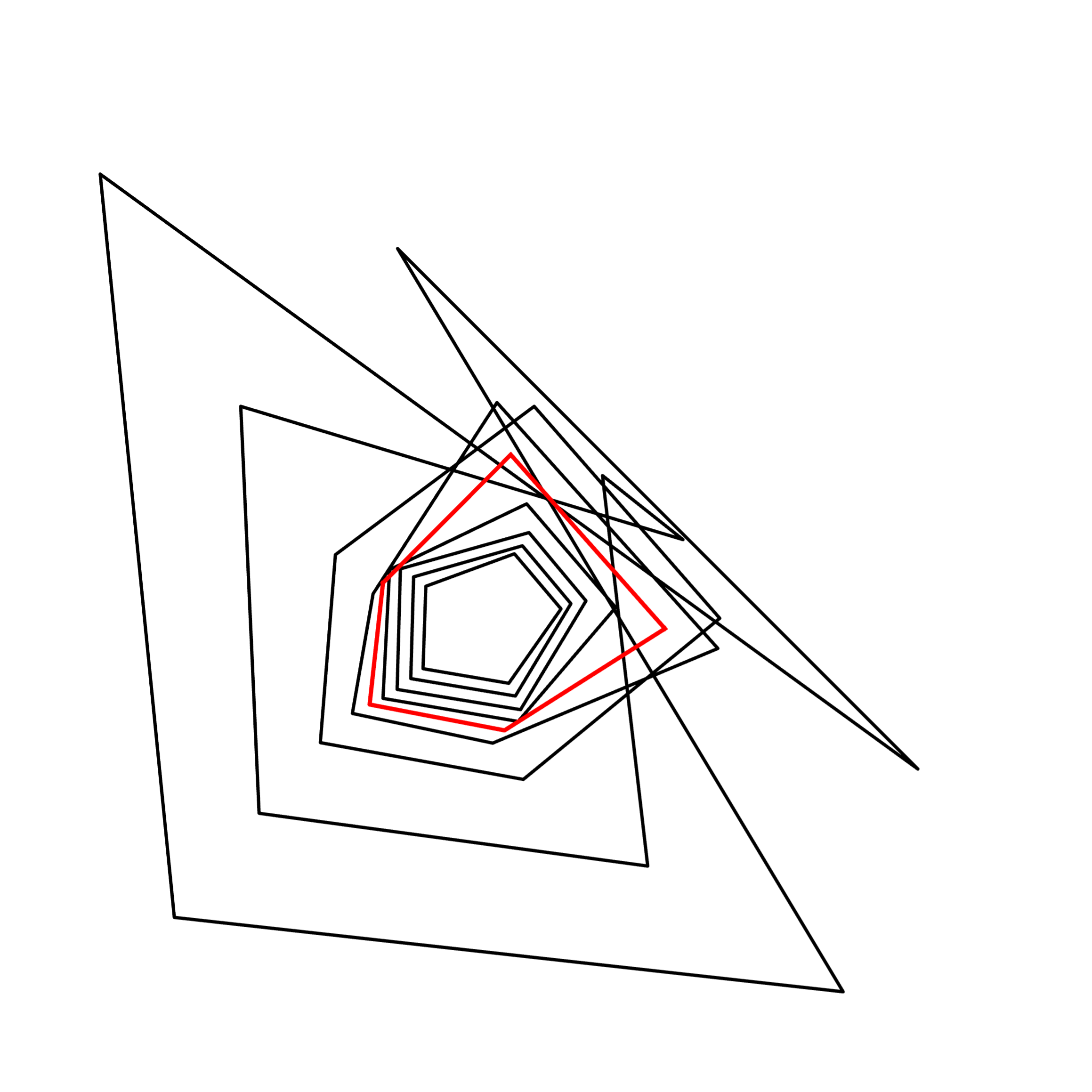}
    \caption{$m=2, \beta = 4$}
    \label{fig:general_n=5_m=2}
  \end{subfigure}
  \hfill
   \begin{subfigure}[b]{0.31\textwidth}
    \includegraphics[width=\textwidth]{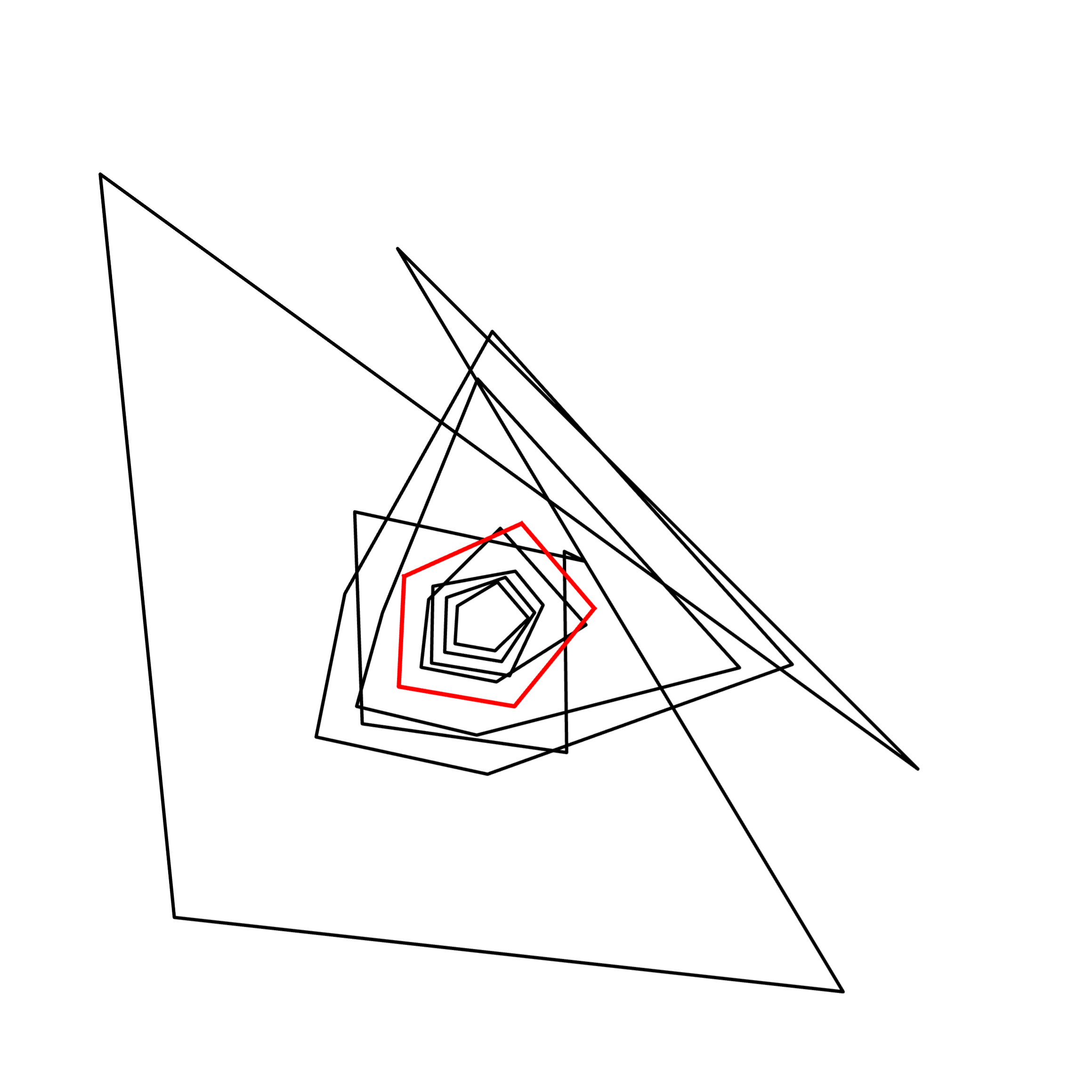}
    \caption{$m=3, \beta = 4$}
    \label{fig:general_n=5_m=3}
  \end{subfigure}
 
  \caption{Evolution of a pentagon under the semi-discrete hyperbolic polyharmonic flow for different values of $m$ and where we have a damping term $\beta = 4.$ All arbitrary constants $a_k$ are chosen to be zero. Distinct time steps of the evolution are shown superimposed over the initial polygon. The same time step values are used for each case of $m$. The polygon at time step $t=1.2$ is shown in red in each case which is to provide comparison to Figure \ref{fig:general_n=5_intermediate} where constants $a_k$ are chosen so that the polygon first flows to a intermediate polygon at time $t=1.2$ before shrinking to a point.} \label{fig:general_n=5}
\end{figure*}

\begin{figure*}

  \begin{subfigure}[b]{0.31\textwidth}
    \includegraphics[width=\textwidth]{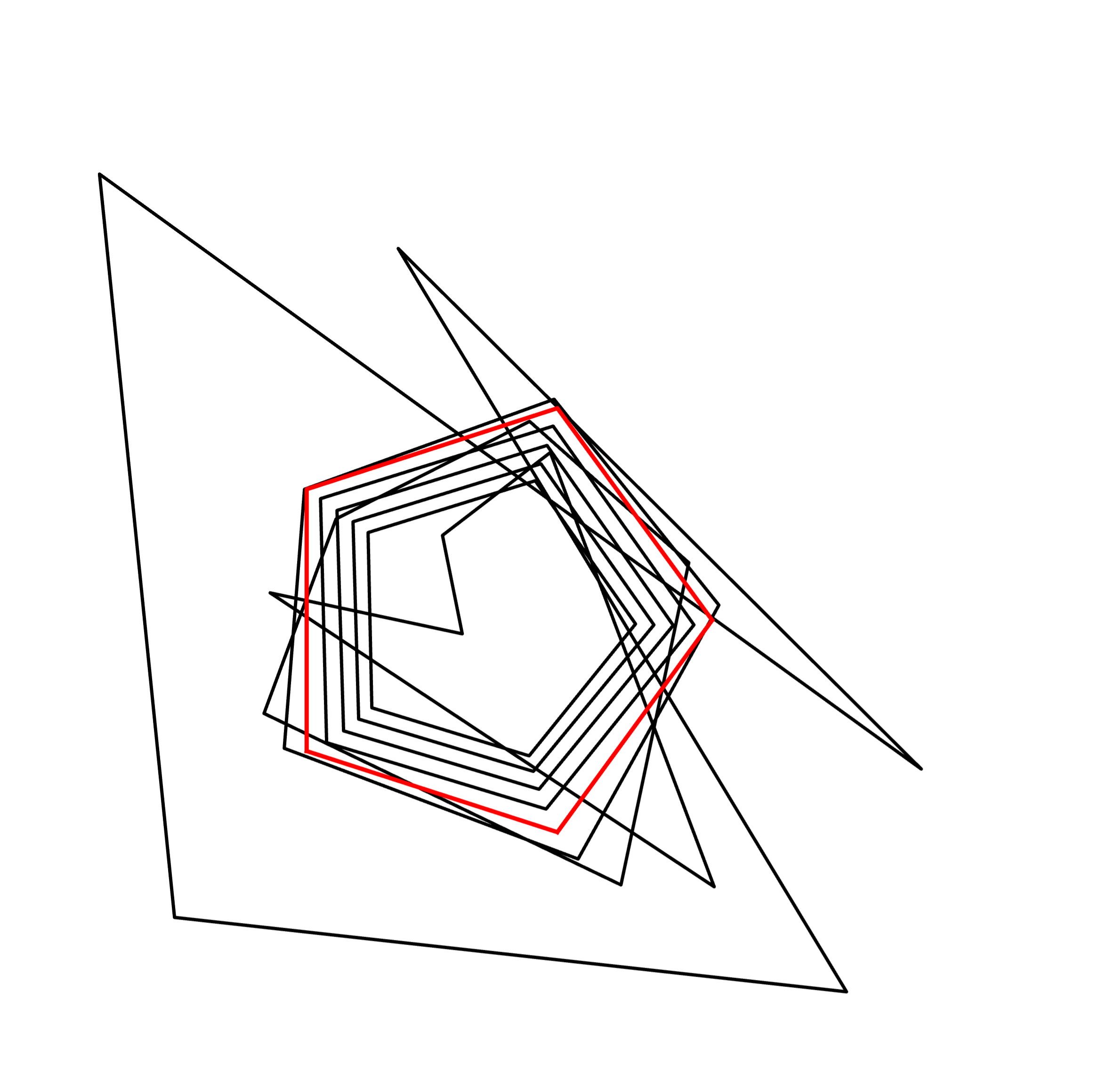}
    \caption{$m=1, \beta = 4$}
    \label{fig:general_n=5_m=1_inter}
  \end{subfigure}
  \hfill
  \begin{subfigure}[b]{0.31\textwidth}
    \includegraphics[width=\textwidth]{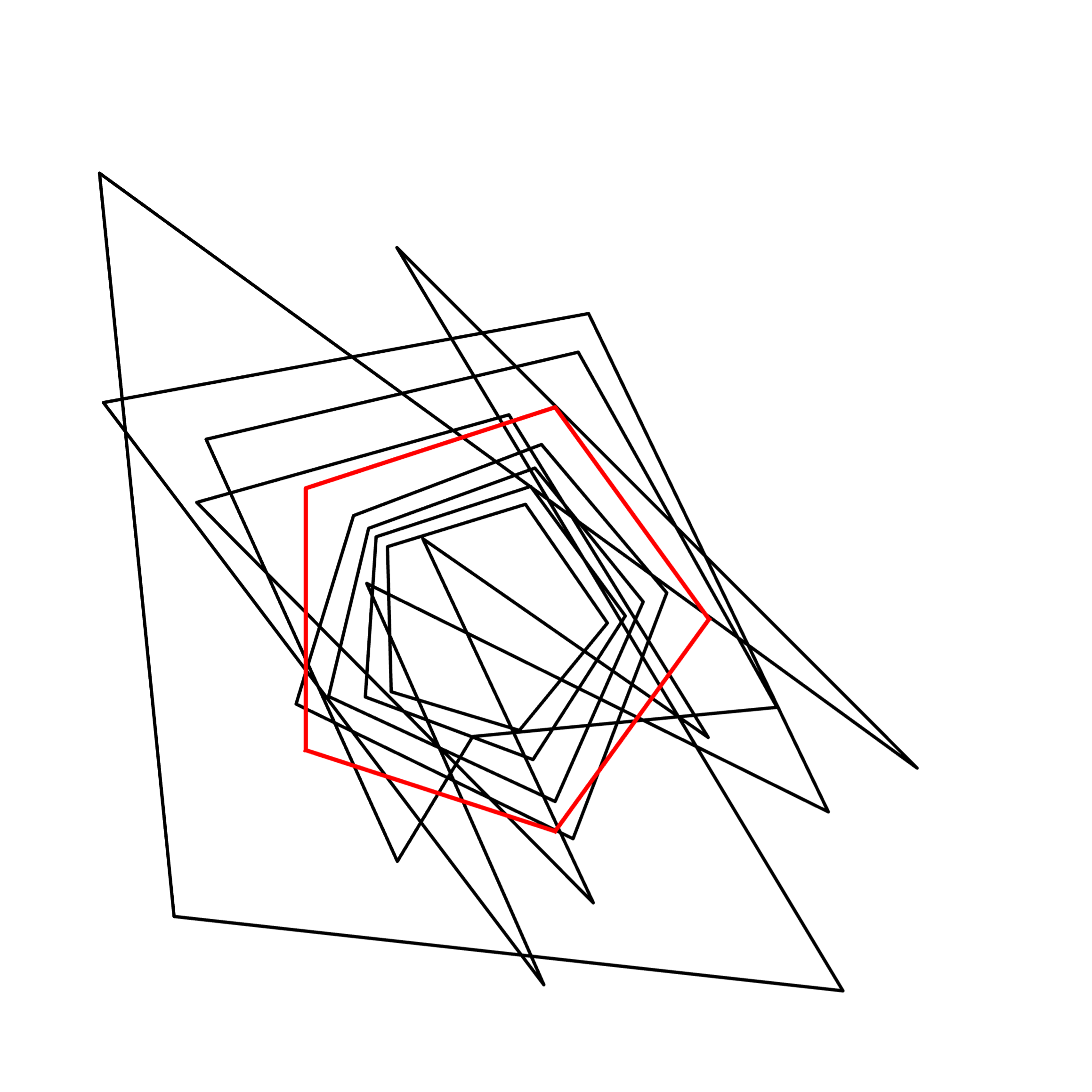}
    \caption{$m=2, \beta = 4$}
    \label{fig:general_n=5_m=2_inter}
  \end{subfigure}
  \hfill
   \begin{subfigure}[b]{0.31\textwidth}
    \includegraphics[width=\textwidth]{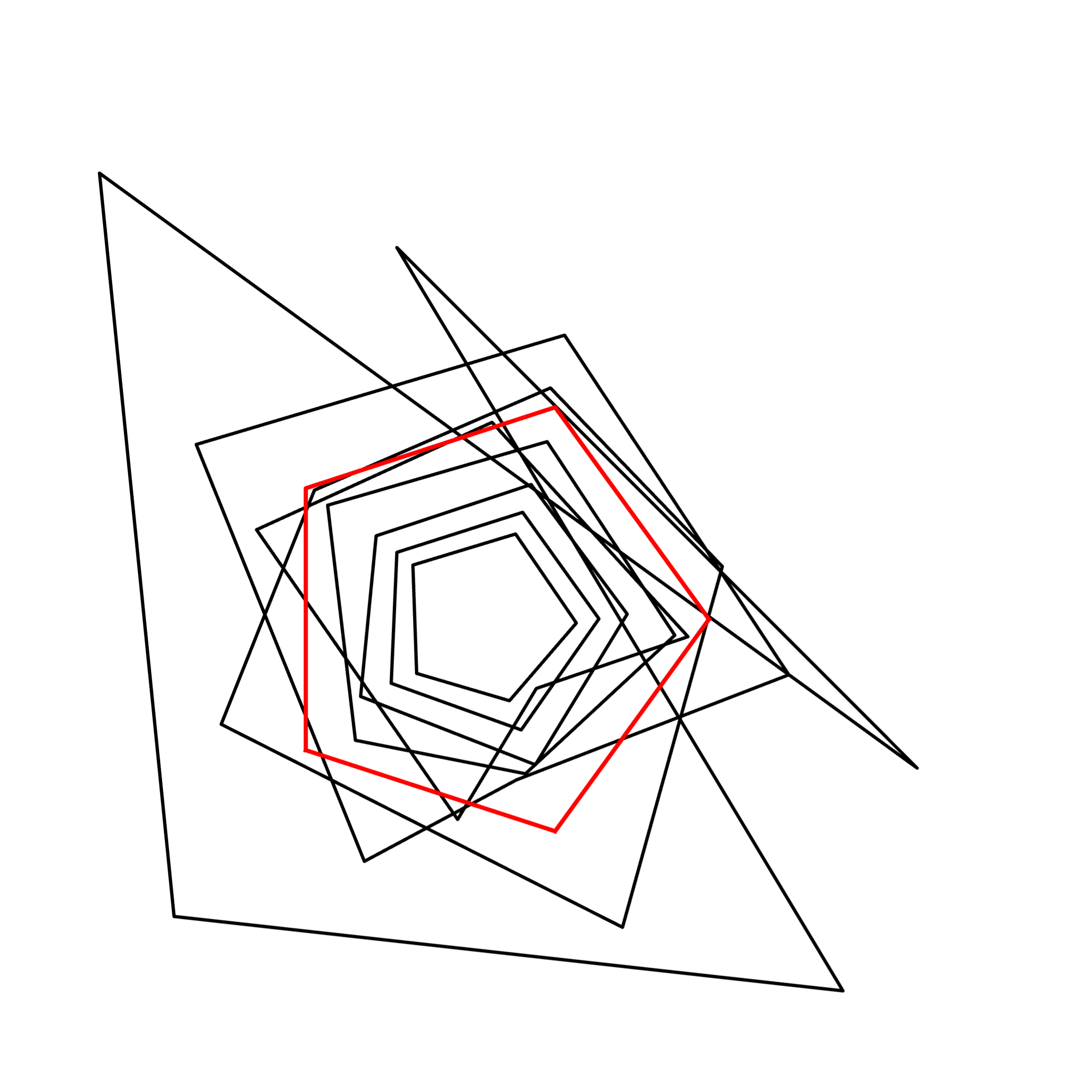}
    \caption{$m=3, \beta = 4$}
    \label{fig:general_n=5_m=3_inter}
  \end{subfigure}
 
  \caption{Evolution of a pentagon under the semi-discrete hyperbolic polyharmonic flow for different values of $m$ and where we have a damping term $\beta = 4.$ The constants $a_k$ are chosen so that at time $t=1.2$ we have $\vec{X}(1.2) = \alpha_0^0P_0+ 3P_1$. Distinct time steps of the evolution are shown superimposed over the initial polygon with the intermediate prescribed polygon at time $t=1.2$ shown in red. The same time step values are used for each case of $m$.} \label{fig:general_n=5_intermediate}
\end{figure*}

\section{Solutions in higher codimension}\label{sec:higher_codimension}

To consider the flow in higher codimension, we set up similarly as in \cite{chow2007semidiscrete}.  Let each vertex $X_j \in \mathbb{R}^p$ be denoted as $X_j = (x_{1j}, x_{2j},\ldots, x_{pj}).$ Consider the $i$th coordinate for every vertex in the polygon, for $i = 1, 2, \ldots, p$, we can define
$$\vec{x}_i = (x_{i0}, x_{i1}, \ldots, x_{i(n-1)})^T,$$ which is a vector in $\mathbb{R}^n.$
Therefore
\begin{equation}\label{eqn:polygon_coordinates}
    \vec{X} = (\vec{x}_1\  \cdots\  \vec{x}_p).
\end{equation}

For $k = 0, 1, \ldots, \lfloor \frac{n}{2}\rfloor,$ let us define the following vectors in $\mathbb{R}^n$
\begin{equation}\label{eqn:ck}
\vec{c}_k  = \left(1, \cos\left(\frac{2\pi k}{n}\right), \cos\left(\frac{4\pi k}{n}\right), \ldots, \cos\left(\frac{2(n-1)\pi k}{n} \right)\right)^T 
\end{equation}
\begin{equation}\label{eqn:sk}
\vec{s}_k  = \left(0, \sin\left(\frac{2\pi k}{n}\right), \sin\left(\frac{2\pi k}{n}\right), \ldots, \sin\left(\frac{2(n-1)\pi k}{n} \right)\right)^T \mbox{.}
\end{equation}
The vectors $\vec{c}_k$ and $\vec{s}_k$ are the real and imaginary parts respectively of eigenvector $P_k.$ Thinking of each entry of $P_k$ as expressed as a vector in $\mathbb{R}^2,$ we have $P_k = (\vec{c}_k\  \vec{s}_k).$ Furthermore, nonzero elements from the set $\{\vec{c}_k, \vec{s}_k\}_{k=0, 1, \ldots, \lfloor \frac{n}{2} \rfloor}$ are mutually orthogonal and form a basis for $\mathbb{R}^n.$ 

Therefore each $\vec{x}_i$ for $i=1,\ldots, p$ can be expressed as 
\begin{equation*}
    \vec{x}_i = \sum_{k=0}^{\lfloor \frac{n}{2} \rfloor}(\alpha_{ik}\vec{c}_k + \beta_{ik}\vec{s}_k)
\end{equation*}
for real coefficients $\alpha_{ik}$ and $\beta_{ik}.$

A family of polygons can therefore be expressed as 

 \begin{equation}\label{eqn:highcodim_polygons}
    \vec{X}(t)  = (\vec{x}_1(t)\  \cdots\  \vec{x}_p(t))
             = \sum_{k=0}^{\lfloor \frac{n}{2} \rfloor}(\vec{c}_k\ \vec{s}_k)
            \begin{bmatrix} 
            \alpha_{1k}(t) & \cdots & \alpha_{pk}(t)\\
            \beta_{1k}(t) & \cdots & \beta_{pk}(t)\\
            \end{bmatrix}.
\end{equation}

\subsection{Self-similar solutions in higher codimension}

We get obtain the behaviour of self-similar solutions in the higher codimension that we see in the plane polygon case. In regards to solutions that are self-similar by scaling, these solution polygons are planar in $\mathbb R^p.$ There are no non-trivial self similar solutions by translation. In the case of self similar solutions by rotation, when a damping term $\beta>0$ is involved again we have no non-trivial solutions. In the rotation case where $\beta=0,$ pure rotations are possible.  We provide a simple construction of a planar rotator, but do not give a complete classification of general rotators.

\begin{prop}
   If a family $\vec{X}(t)$ of polygons with $n$ vertices in $\mathbb{R}^p$ is a self-similar scaling solution to the flow (\ref{eqn:polyflow}), then $\vec{X}(t)$ has the form 
   \begin{equation}\label{eqn:selfsim_highcodim_scale}
       \vec{X}(t) = g(t)(\vec{c}_k\ \vec{s}_k)
    \begin{bmatrix} 
    \alpha_{1} & \cdots & \alpha_{p}\\
    \beta_{1} & \cdots & \beta_{p}\\
    \end{bmatrix}\\,
   \end{equation}
   for any $k \in \{1, 2, ..., \lfloor \frac{n}{2} \rfloor\}$ and real constants $\alpha_j$ and $\beta_j$ for $j = 1,\ldots, p,$ where 
   
   \begin{equation}\label{eqn:selfsim_scale_cases}
       g\left( t\right) = \begin{cases}
    c\, t + 1 & \mbox{ for }\beta=0 \mbox{ and } k=0\\
    \frac{c}{\beta} + \left[ 1 - \frac{c}{\beta} \right] e^{-\beta\, t} & \mbox{ for } \beta > 0 \mbox{ and } k =0\\
 e^{-\frac{\beta}{2}t} \left[ \cos \left( \gamma_{m, k}\, t\right) +   
    c\, \sin\left( \gamma_{m, k}\, t\right) \right] & \mbox{ for } 0 \leq \beta < 2 \sqrt{\left| \lambda_{m, k} \right|} \mbox{ and } k\in \left\{ 1, \ldots, \lfloor \frac{n}{2} \rfloor \right\} \\
    e^{r_+ t} + c\, \left( e^{r_-t} - e^{r_+ t} \right) & \mbox{ for } \beta \geq 2 \sqrt{\left| \lambda_{m, k} \right|} \mbox{ and } k\in \left\{ 0, \ldots, \lfloor \frac{n}{2} \rfloor \right\}\\ 
   \end{cases}
\end{equation}
such that $r_\pm = - \frac{\beta}{2} \pm \sqrt{\left( \frac{\beta}{2} \right)^2 + \lambda_{m, k}}, \ \gamma_{m, k} = \sqrt{ \left| \left( \frac{\beta}{2} \right)^2 + \lambda_{m, k}\right|}$ and
$c \in \mathbb{R}, c_1, c_2 \in \mathbb{C}$ are constants.  
   
 The higher codimension polygon $\vec{X}(t)$ is the image of a linear transformation of a regular polygon $P_k = (c_k\  s_k)$ with $n$ vertices in $\mathbb{R}^2,$ with linear transformation $T:\mathbb{R}^2 \to \mathbb{R}^p$ given by
   \begin{equation}\label{eqn:linear_transform}
T(x,y) = (x \ \ y)\begin{bmatrix} 
    \alpha_1 & \cdots & \alpha_p\\
    \beta_1 & \cdots & \beta_p\\
    \end{bmatrix}.
\end{equation} 
\end{prop}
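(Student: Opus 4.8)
The plan is to mimic the proof of Proposition \ref{prop:selfsimilar} but now tracking the $p$ spatial coordinates simultaneously. Writing $\vec{X}_0 = (\vec{x}^0_1\ \cdots\ \vec{x}^0_p)$ as an $n\times p$ matrix with columns $\vec{x}^0_i \in \mathbb{R}^n$, I substitute the scaling ansatz $\vec{X}(t) = g(t)\vec{X}_0$ into \eqref{eqn:polyflow}. Since the difference operator $(-1)^{m+1}M^m$ acts on the left, i.e.\ on the vertex ($n$-)index, it commutes with extracting the $i$th column, and the flow equation separates into the $p$ scalar-coefficient relations
$$\left[ \frac{g''(t)}{g(t)} + \beta\,\frac{g'(t)}{g(t)} \right] \vec{x}^0_i = (-1)^{m+1}M^m \vec{x}^0_i, \qquad i=1,\ldots,p.$$

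The first key step is the separation-of-variables observation: the right-hand sides are independent of $t$, so on any column with $\vec{x}^0_i\neq \vec{0}$ the bracketed scalar must equal a single constant $C$. Provided $\vec{X}_0$ is nontrivial at least one such column exists, whence $g$ solves $g'' + \beta g' - C g = 0$ and \emph{every} nonzero column satisfies $(-1)^{m+1}M^m\vec{x}^0_i = C\,\vec{x}^0_i$. Thus all columns lie in a single eigenspace of $(-1)^{m+1}M^m$, forcing $C=\lambda_{m,k}$ for one fixed $k$ by \eqref{eqn:eigenvalues} and Proposition \ref{prop:chow}. The crucial point — and the main obstacle to state cleanly — is precisely this coupling: although the $p$ coordinates could a priori sit in different eigenspaces, the shared scalar ratio $g''/g+\beta g'/g$ forces a common eigenvalue, so one index $k$ governs all coordinates at once.

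With $k$ fixed, I invoke the real description of the eigenspace from \eqref{eqn:ck}--\eqref{eqn:sk}: for $k\in\{1,\ldots,\lfloor n/2\rfloor\}$ the eigenspace of $\lambda_{m,k}$ is spanned over $\mathbb{R}$ by $\vec{c}_k$ and $\vec{s}_k$, so each column is $\vec{x}^0_i=\alpha_i\vec{c}_k+\beta_i\vec{s}_k$, and these assemble into the matrix form \eqref{eqn:selfsim_highcodim_scale}. The degenerate cases are handled separately: $k=0$ reduces, via Lemma \ref{lem:nullspace_matrix}, to constant columns, i.e.\ a single point with $g$ solving $g''+\beta g'=0$; and for even $n$ the value $k=n/2$ yields $\vec{s}_{n/2}=\vec{0}$, so the eigenspace is one-dimensional (the line-interval polygon). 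Reading the columns of the coefficient matrix as the images of the standard basis of $\mathbb{R}^2$ then identifies $\vec{X}(t)=g(t)\,T(P_k)$ with $T$ the linear map \eqref{eqn:linear_transform}, giving the stated geometric conclusion that scaling self-similar solutions are planar, being linear images of a regular polygon.

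It remains to solve $g''+\beta g'-\lambda_{m,k}\,g=0$ under the normalisation $g(0)=1$. The characteristic roots are exactly $r_\pm=-\tfrac{\beta}{2}\pm\sqrt{(\beta/2)^2+\lambda_{m,k}}$, and the four branches of \eqref{eqn:selfsim_scale_cases} correspond to the sign of the discriminant $(\beta/2)^2+\lambda_{m,k}$ together with the degenerate value $\lambda_{m,k}=0$: a repeated root ($0$, giving $ct+1$, or $\{0,-\beta\}$) when $k=0$; a complex-conjugate pair producing the $e^{-\beta t/2}$-modulated oscillation of frequency $\gamma_{m,k}$ when $0\le\beta<2\sqrt{|\lambda_{m,k}|}$; and two distinct real roots (over- or critically damped) when $\beta\ge 2\sqrt{|\lambda_{m,k}|}$. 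In each case $g(0)=1$ fixes one constant and leaves the single free parameter $c$ displayed in \eqref{eqn:selfsim_scale_cases}, reproducing the planar formulas of Proposition \ref{prop:selfsimilar} verbatim and completing the argument.
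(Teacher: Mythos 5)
Your proposal is correct and takes essentially the same approach as the paper's proof: substitute the scaling ansatz, separate variables to obtain the ODE $g'' + \beta g' - Cg = 0$ together with the eigenvalue equation $\left[(-1)^{m+1}M^m - CI_n\right]\vec{X}_0 = 0_{n\times p}$, identify $C = \lambda_{m,k}$ with real eigenspace spanned by $\vec{c}_k$ and $\vec{s}_k$, and solve for $g$ under $g(0)=1$. If anything, your explicit column-wise argument that the shared ratio $g''/g + \beta g'/g$ forces all $p$ coordinates into a \emph{single} eigenspace makes the classification (converse) direction slightly more transparent than the paper's write-up, which passes directly to the matrix kernel equation and then exhibits the linear images of $P_k$ as solutions.
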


\begin{proof}
    Suppose $\vec{X}(t) = g(t)\vec{X}_0$ for differentiable scaling function $g: \mathbb{R} \rightarrow \mathbb{R}.$ Following the same process as in the proof of Proposition \ref{prop:selfsimilar} we find
    \begin{equation*}
        \left(\frac{g"(t)}{f(t)} + \beta \frac{g'(t)}{g(t)}\right)\vec{X}_0 = (-1)^{m+1}M^m\vec{X}_0
    \end{equation*} such that since the right hand side of the above equation is independent of $t,$ we have
    \begin{equation*}
        g"(t) + \beta g'(t) - Cg(t) = 0.
    \end{equation*}

This results in the following
\begin{equation}\label{eqn:highcodim_Csolve}
    \left[(-1)^{m+1}M^m - CI_n\right]\vec{X}_0 = 0_{n\times p.}
\end{equation}
For non-zero $\vec{X}_0$ we therefore require $C$ to be the eigenvalues of the matrix $(-1)^{m+1}M^m$ such that $C= \lambda_{m,k}$ for $k \in \{0,1,\ldots, \lfloor\frac{n}{2}\rfloor\}.$ The corresponding eigenvectors are $P_k=(\vec{c}_k\ \vec{s}_k)$ and $P_{n-k}=(\vec{c}_k\ -\vec{s}_k),$ noting $\vec{s}_{0} = \vec{0}$ and $\vec{s}_{\frac{n}{2}} = \vec{0}$ for $n$ even.  Applying a linear transformation as given in (\ref{eqn:linear_transform}), produces a polygon in $\mathbb{R}^p$ given by
\begin{equation*}
    \vec{X}_0  = (\vec{c}_k\ \vec{s}_k)
            \begin{bmatrix} 
            \alpha_{1} & \cdots & \alpha_{p} \\
            \beta_{1} & \cdots & \beta_{p}\\
            \end{bmatrix},
\end{equation*} and furthermore this polygon solves  (\ref{eqn:highcodim_Csolve}), where $\alpha_{i}$ and $\beta_{i}$ for all $i = 1,\ldots, p$ are any real coefficients. 

Similar to the plane polygon case, solving equation (\ref{eqn:highcodim_Csolve}) for $C=\lambda_{m,k},$ $\beta$ and the required initial condition $g(0) =1$ gives us the expressions for $g(t)$ as stated in (\ref{eqn:selfsim_scale_cases}).

\end{proof}

\begin{prop}\label{lem:selfsim_rotate_highcodim}
    Consider the family $\vec{X}(t)$ of polygons in $\mathbb{R}^p$ such that \begin{equation}\label{eqn:rotateselfsim_highcodim}
    \vec{X}(t) = \vec{X}_0R(t),
\end{equation}
where $R: \mathbb{R} \rightarrow SO(p)$ represents a time-dependent $p\times p$ rotation of the polygon $\vec{X}_0$ in $\mathbb{R}^p$ such that $R(0) = I_p.$ 

If $\vec{X}(t)$ satisfies (\ref{eqn:polyflow}) for some $\beta>0$ and all $t$ then $R(t) \equiv I_p$ and $\vec{X}_0$ corresponds to a point in $\mathbb{R}^p,$ that is the only self similar solution by rotation in this case is the trivial solution. 

If $\vec{X}(t)$ satisfies (\ref{eqn:polyflow}) with $\beta=0$ for all $t$ then pure rotations are possible.

\end{prop}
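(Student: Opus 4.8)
The plan is to exploit the one invariant that right multiplication by $R(t)\in SO(p)$ preserves, namely the Gram matrix of the vertices, and to combine it with the convergence already established for $\beta>0$. First I would record that $R(t)R(t)^T=I_p$ gives
$\vec{X}(t)\vec{X}(t)^T=\vec{X}_0R(t)R(t)^T\vec{X}_0^T=\vec{X}_0\vec{X}_0^T$, so the $n\times n$ matrix of vertex inner products $\langle X_j,X_l\rangle$ is constant along the flow. This deliberately sidesteps the genuinely hard object produced by naive substitution: writing $A=R'R^T\in\mathfrak{so}(p)$ for the angular velocity, the flow forces $\vec{X}_0\,(A'+A^2+\beta A)=$ const, i.e.\ a matrix Riccati-type equation on $\mathfrak{so}(p)$ that I do not expect to be tractable in closed form for general $p$.

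Next, since $M^m$ acts on the left (on the $\mathbb{R}^n$ index), each column $\vec{x}_i(t)$ of $\vec{X}(t)=\vec{X}_0R(t)$ solves the scalar damped equation $\vec{x}_i''+\beta\vec{x}_i'=(-1)^{m+1}M^m\vec{x}_i$, so Theorem \ref{thm:planedamp} applies coordinate-wise. For $\beta>0$ every non-constant Fourier mode decays, whence $\vec{X}(t)\to\vec{X}_\infty$ as $t\to\infty$ with $\vec{X}_\infty=P_0\vec{v}^T$ a point polygon for some $\vec{v}\in\mathbb{R}^p$. Passing to the limit in the constant Gram matrix gives $\vec{X}_0\vec{X}_0^T=\vec{X}_\infty\vec{X}_\infty^T=|\vec{v}|^2\,P_0P_0^T$, every entry of which equals $|\vec{v}|^2$. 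In particular $|X_j|^2=|\vec{v}|^2$ and $\langle X_j,X_l\rangle=|X_j|\,|X_l|$ for all $j,l$; equality in Cauchy--Schwarz forces $X_j=X_l$, so $\vec{X}_0$ is a single point.

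To close the loop I would then observe that, $\vec{X}_0$ being a point, all columns are multiples of $P_0$, so $M^m\vec{X}\equiv\vec{0}$ by Lemma \ref{lem:matrix_properties} and the flow collapses to $\vec{X}''+\beta\vec{X}'=0$. The moving vertex $u(t)=v_0^TR(t)$ then satisfies $u''+\beta u'=0$, whose solutions trace the segment $u_0+\tfrac{1}{\beta}u'(0)\,(1-e^{-\beta t})$; constancy of the norm $|u(t)|=|v_0|$ forces $u'(0)=0$, so $u$ is stationary and $\vec{X}_0R(t)=\vec{X}_0$. Thus the rotation acts trivially on the polygon, giving the trivial stationary solution.

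Finally, for $\beta=0$ I would simply exhibit a planar rotator inside a coordinate $2$-plane of $\mathbb{R}^p$: placing $\vec{X}_0=c_1P_k+c_2P_{n-k}$ in that plane and rotating it within the plane at angular speed $\sqrt{-\lambda_{m,k}}$ reproduces the solution of Proposition \ref{prop:rotate_plane} and verifies the flow directly, establishing existence without attempting a classification. I expect the main obstacle to be the rigour of the convergence step and the genuine degeneracy in the claim $R(t)\equiv I_p$: the rotation is only pinned down on the line through $v_0$, so the honest statement is that the polygon is stationary, with $R(t)\equiv I_p$ holding only modulo rotations fixing $v_0$.
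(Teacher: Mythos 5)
Your proof is correct, but your route for the $\beta>0$ case is genuinely different from the paper's. The paper substitutes the rotating ansatz into the explicit representation formula of Theorem \ref{thm:highcodim_planedamp} and argues that, since every non-constant modal coefficient $\alpha_{ik}(t)$, $\beta_{ik}(t)$ carries an exponentially decaying factor, a solution of the form $\vec{X}_0R(t)$ forces all of these coefficients (and the $a_{i0}$) to vanish, leaving a point; the reason a rotating solution cannot tolerate decaying coefficients is left largely implicit there. Your Gram-matrix argument supplies exactly that missing rigidity: $\vec{X}(t)\vec{X}(t)^T\equiv\vec{X}_0\vec{X}_0^T$ under right multiplication by $SO(p)$, the column-wise application of the representation formula gives convergence to a point polygon $P_0\vec{v}^T$, so every entry of $\vec{X}_0\vec{X}_0^T$ equals $\left|\vec{v}\right|^2$, and equality in Cauchy--Schwarz collapses $\vec{X}_0$ to a single vertex. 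Your follow-up step --- that the point itself must then be stationary, because a constant-norm path cannot traverse the straight segment $u_0+\beta^{-1}u'(0)\left(1-e^{-\beta t}\right)$ unless $u'(0)=0$ --- is also sound and is not spelled out in the paper. Your closing caveat is apt: the conclusion ``$R(t)\equiv I_p$'' is literally provable only modulo the stabilizer of the limit point (rotations fixing $v_0$, or arbitrary $R(t)$ when $v_0=0$), and both your argument and the paper's really establish the invariant statement $\vec{X}_0R(t)=\vec{X}_0$. For $\beta=0$ your planar rotator in a coordinate $2$-plane is the same construction the paper gives via the skew-symmetric generator, so that half coincides. What your approach buys is rigour and coordinate-freeness in the rigidity step; what the paper's buys is uniformity, since the same modal bookkeeping is reused across all of its self-similarity propositions.
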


\begin{proof}

We first consider the case where $\beta >0.$ We also consider the explicit expression for the solutions for (\ref{eqn:polyflow}) stated and proven in Theorem \ref{thm:highcodim_planedamp}. This solution is given in (\ref{E:highcodim_flowsolndamp}) where 
\begin{equation*}
   \vec{X}_0 = \sum_{k=1}^{\lfloor \frac{n}{2} \rfloor}(\vec{c}_k\ \vec{s}_k)
   \begin{bmatrix} 
           \alpha^0_{1k} & \cdots & \alpha^0_{pk}\\
          \beta^0_{1k} & \cdots & \beta^0_{pk}\\
      \end{bmatrix},
\end{equation*}
and 
\begin{equation}\label{E:highcodim_rotateprop}
\vec{X}(t) = (\vec{\alpha}_{10}(t) \cdots \vec{\alpha}_{p0}(t)) + \sum_{k=1}^{\lfloor \frac{n}{2}\rfloor}(\vec{c}_k\ \vec{s}_k)
\begin{bmatrix} 
           \alpha_{1k}(t) & \cdots & \alpha_{pk}(t)\\
          \beta_{1k}(t) & \cdots & \beta_{pk}(t)\\
      \end{bmatrix},
  \end{equation}
where
    $$\alpha_{i0}(t) = \alpha_{i0}^0 + \frac{a_{i0}}{\beta} - \frac{a_{i0}}{\beta}e^{-\beta t},$$ for each $i=1,\ldots, p$ and $a_{i0}$ are arbitrary constants. The $\alpha_{ik}(t)$ and $\beta_{ik}(t)$ are given in (\ref{E:alphaik}) and (\ref{E:betaik}) also in the statement for Theorem \ref{thm:highcodim_planedamp}. T

We require $\vec{X}_0R(t),$ where $R(t)$ is a rotation function, to be of the form given in (\ref{E:highcodim_rotateprop}) above to satisfy (\ref{eqn:polyflow}). The expressions for $\alpha_{ik}(t)$ and $\beta_{ik}(t)$ all include an exponential decaying term $e^{-\frac{b}{2}t}$ and so we must have $a_{i0}=0$, $\alpha_{ik}(t)=0$ and $\beta_{ik}(t)=0$ for all $i=1,\ldots, p$ and $k = 1,\ldots, \lfloor \frac{n}{2} \rfloor.$ This gives

\begin{equation*}
    \vec{X}_0 = (\vec{\alpha}_{10}(t) \cdots \vec{\alpha}_{p0}(t)),
\end{equation*}
and so our initial polygon is a point in $\mathbb R^p$ and we have only the trivial solution for $\vec{X}(t)$ in the $\beta>0$ case. 

For the case where $\beta =0$ we give an example of pure rotation self-similar solutions, without classifying all possible solutions of this type.

We consider the skew symmetric matrix $S: \mathbb{R} \rightarrow \mathfrak{so}(p)$ such that for the exponential map, $\exp:\mathfrak{so}(p) \rightarrow SO(p),$ from the set of skew symmetric $p\times p$ matrices to the set of $p\times p$ rotation matrices, we have $\exp(S(t)) = R(t).$ The second derivative of $R$ is therefore given by

 \begin{equation*}\label{eqn:rotation_deriv_highcodim}
        \frac{d^2R}{dt^2}= \frac{d^2S}{dt^2}R(t) + \left(\frac{dS}{dt}\right)^2R(t).
    \end{equation*}
    
For $\vec{X}(t) = \vec{X}_0R(t)$ to satisfy (\ref{eqn:polyflow}) where $\beta=0$ we have
\begin{equation*}
    \vec{X}_0\frac{d^2R}{dt^2}\ = (-1)^{m+1}M^m\vec{X}_0R(t),
\end{equation*}
which in terms of the skew symmetric matrix and with rearrangement implies
\begin{equation}\label{E:highcodim_scalar}
\vec{X}_0\left[\frac{d^2S}{dt^2} + \left(\frac{dS}{dt}\right)^2\right] = (-1)^{m+1}M^m\vec{X}_0.
\end{equation}

For (\ref{E:highcodim_scalar}) to be true for all $t$ then $\frac{d^2S}{dt^2} + \left(\frac{dS}{dt}\right)^2$ must be a constant matrix we denote as $B,$ and 
\begin{equation}\label{eqn:rotation_highcodim_scalar}
\vec{X}_0B  = (-1)^{m+1}M^m\vec{X}_0.
\end{equation}

We can consider rotations in a plane given by orthogonal axes $x_i$ and $ x_j,$ for $i,j \in \{1,\ldots, p\}$ and $i\neq j,$ where remaining $p-2$ axes are invariant. We denote this rotation as $R_{x_i,x_j}(f_{ij}(t))$ where the differentiable function $f_{ij}:\mathbb{R} \rightarrow \mathbb{R}$ is the angle of rotation at time $t$ in the $x_i-x_j$ plane and where $f_{ij}(0) = 0.$ Furthermore $R_{x_i,x_j}(f_{ij}(t)) = \exp(S_{x_i,x_j}(f_{ij}(t)))$ for skew symmetric matrix $S_{x_i,x_j}(f_{ij}(t)).$  This matrix $S_{x_i,x_j}(f_{ij}(t))= [s_{ab}: a,b = 1,\ldots, p]$ has entries given by $s_{ij} = -s_{ji} = -f_{ij}(t),$ or $s_{ij} = -s_{ji} = f_{ij}(t),$  and all other entries are equal to zero. We also write $S_{x_i,x_j}(f_{ij}(t)) = f_{ij}(t)S_{x_i,x_j}(1)$ and note that 

$$\frac{d}{dt}S_{x_i,x_j}(f_{ij}(t)) = f'_{ij}(t)S_{x_i,x_j}(1) \mbox{ and } \frac{d^2}{dt^2}S_{x_i,x_j}(f_{ij}(t)) = f''_{ij}(t)S_{x_i,x_j}(1).$$ Denoting by $I_{ij,p}$ the $p \times p$ matrix with $1$s in the $(i,i)$th and $(j,j)$th diagonal entries, and zeroes elsewhere, we have  $(S_{x_i,x_j}(1))^2 = -I_{ij,p}.$

Following from (\ref{E:highcodim_scalar}), for a rotation on the $x_i - x_j$ sub-plane, the matrix 
$$[f''_{ij}(t)S_{x_i,x_j}(1) + (f'_{ij}(t))^2(S_{x_i,x_j}(1))^2]$$
has $-(f'_{ij}(t))^2$ in the $(i,i)$th and $(j,j)$th diagonal entries, $-f''_{ij}(t)$ in the $(i,j)$th entry and $f''_{ij}(t)$ in the $(j,i)$th entry (for $i<j$), and all other entries are zero. 

Since this matrix is constant, we must have $f'_{ij}(t) = b,$ for some constant $b$ and so $f''_{ij}(t) = 0$
 for all $t.$ Therefore we have
\begin{equation}\label{E:betazero_rotation}
    -b^2\vec{X}_0 I_{ij,p} = (-1)^{m+1}M^m\vec{X}_0.
\end{equation}
The matrix $\vec{X}_0 I_{ij,p}$ has zero coordinate vectors at each position, except for the $i$th and $j$th coordinate vectors. That is, for $\vec{X}_0 I_{ij,p} = (\vec{x}_1 \cdots \vec{x}_p),$ $\vec{x}_k = \vec{0}$ for all $k \neq i,j.$ We denote this matrix as $\vec{X}_{0,ij}.$

For any $k \in \{1,\ldots, \lfloor \frac{n}{2}\rfloor\},$ consider an initial polygon of 
\begin{equation*}
    \vec{X}_0 = \vec{X}_{0,ij} = (\vec{c}_k\ \vec{s}_k) \begin{bmatrix} 
    \alpha_{1} & \cdots & \alpha_{p}\\
    \beta_{1} & \cdots & \beta_{p}\\
    \end{bmatrix}, 
\end{equation*}
where $\alpha_l = \beta_l = 0$ for all $l \neq i,j,$ and is therefore the image of a regular polygon $P_k = (\vec{c}_k\ \vec{s}_k)$ from $\mathbb R^2$ mapped to $\mathbb R^p$ by a linear transformation, that is planar in the $x_i - x_j$ sub-plane of $\mathbb R^p.$
We note that for this expression of $\vec{X}_0$ which corresponds to the eigenvectors $P_k,$ we have
\begin{equation*}
    (-1)^{m+1}M^m\vec{X}_0 = \lambda_{m,k}\vec{X}_0,
\end{equation*}
for any $k \in \{1,\ldots, \lfloor \frac{n}{2}\rfloor\}.$

From (\ref{E:betazero_rotation}) we therefore have
$$-b^2\vec{X}_0 = -b^2\vec{X}_{0,ij} = (-1)^{m+1}M^m\vec{X}_0,$$
and so $b= \pm \sqrt{-\lambda_{m,k}}$ for $k \in \{0,1,\ldots, n-1\}$ such that $f_{ij}(t) = \pm \sqrt{-\lambda_{m,k}}t.$ The rotation in the $x_i-x_j$ plane is therefore given by $R_{x_i,x_j}(\pm \sqrt{-\lambda_{m,k}}t).$ 
This demonstrates that polygons in $\mathbb R^p$ given by affine images of regular polygons $P_k$ in $\mathbb R^2$ such that they are planar polygons in main sub-planes of $\mathbb R^p$, are self similar solutions by rotation. We suspect that there are also other rotation self-similar solutions for general rotations. 
\end{proof}

\begin{remark}
If the rotation $R(t)$ is acting on a subspace that the polygon $\vec{X}(t)$ is not in, then $\vec{X}(t) = \vec{X}_0R(t) = \vec{X}_0.$ In this case for $\beta\geq 0,$ the only solution is the trivial solution $\vec{X}_0$ where $\vec{X}_0 = (\vec{a}_1, \ldots, \vec{a}_p)$ for real constants $a_i$, $i=1,\ldots, p.$

\end{remark}

We complete the discussion of higher co-dimension self-similar solutions by stating there are no non-trivial solutions by translation. 

\begin{prop}
    Consider the family of polygons with $n$ vertices in $\mathbb{R}^p$ , $\vec{X}(t),$ such that \begin{equation}\label{eqn:translateselfsim}
    \vec{X}(t) = \vec{X}_0 + \vec{h}(t),
\end{equation}
where $\vec{h}(t)$ is a $n\times p$ matrix that represents the translation of the polygon $\vec{X}_0$ and $\vec{h}(0) = 0_{n\times p}.$ If $\vec{X}(t)$ satisfies (\ref{eqn:polyflow}) for all $t$ then each vertex of $\vec{X}_0$ is the same fixed point in $\mathbb{R}^p.$ That is, there are no non-trivial self-similar solutions by translation under (\ref{eqn:polyflow}).  
\end{prop}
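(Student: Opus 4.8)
The plan is to reduce the problem to the planar argument of Proposition~\ref{prop:selfsim_plane_translate} applied coordinate by coordinate. The essential structural observation is that a genuine translation moves every vertex by the same vector in $\mathbb{R}^p$, so the $n\times p$ matrix $\vec{h}(t)$ has all $n$ rows equal to a common row $\vec{v}(t)^T$; equivalently, each column $\vec{h}_i(t)\in\mathbb{R}^n$ is a spatially constant vector of the form $v_i(t)\,\vec{1}$ for $i=1,\dots,p$.

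First I would substitute \eqref{eqn:translateselfsim} into \eqref{eqn:polyflow}. Since $\vec{X}_0$ is independent of $t$ we have $\vec{X}''=\vec{h}''$ and $\vec{X}'=\vec{h}'$, giving
\begin{equation*}
\vec{h}''(t)+\beta\,\vec{h}'(t)=(-1)^{m+1}M^m\bigl[\vec{X}_0+\vec{h}(t)\bigr].
\end{equation*}
Because every column of $\vec{h}(t)$ is a constant vector, Lemma~\ref{lem:matrix_properties}(1) gives $M^m\vec{h}(t)=0_{n\times p}$, so the right-hand side collapses to the time-independent matrix $(-1)^{m+1}M^m\vec{X}_0$.

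Next I would read this equation one column at a time. For the $i$th coordinate, $\vec{h}_i(t)=v_i(t)\vec{1}$ yields $\vec{h}_i''+\beta\vec{h}_i'=(v_i''+\beta v_i')\vec{1}$, a spatially constant vector; equating it to the $i$th column of $(-1)^{m+1}M^m\vec{X}_0$ forces that column to equal $c_i\vec{1}$ for some constant $c_i$. Writing $\vec{x}_i$ for the $i$th column of $\vec{X}_0$, this says $(-1)^{m+1}M^m\vec{x}_i=\vec{c}_i$. Now Lemma~\ref{lem:constant_vec} forces $c_i=0$, and then Lemma~\ref{lem:nullspace_matrix} forces $\vec{x}_i$ to be a constant vector with all entries equal. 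Running this over all $i=1,\dots,p$ shows every column of $\vec{X}_0$ is constant, i.e.\ every vertex of $\vec{X}_0$ coincides, which is precisely the trivial single-point solution.

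The argument is a routine lift of the planar case, so I do not expect a genuine obstacle; the only point requiring care is the bookkeeping that certifies $M^m\vec{h}(t)=0$ and that the per-coordinate reduction is legitimate, i.e.\ that the $n\times p$ matrix identity genuinely decouples into $p$ independent scalar problems of the type handled by Lemmas~\ref{lem:constant_vec} and~\ref{lem:nullspace_matrix}. One could also, as in the remark following Proposition~\ref{prop:selfsim_plane_translate}, integrate the resulting scalar ODE $v_i''+\beta v_i'=0$ to exhibit the admissible translation paths explicitly, but this is not needed for the stated conclusion.
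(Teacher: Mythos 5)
Your proposal is correct and follows essentially the same route as the paper: substitute the translation ansatz into \eqref{eqn:polyflow}, use $M^m\vec{1}=\vec{0}$ to reduce the right-hand side to the time-independent matrix $(-1)^{m+1}M^m\vec{X}_0$, deduce that each column of that matrix is a constant vector, and then apply Lemma~\ref{lem:constant_vec} followed by Lemma~\ref{lem:nullspace_matrix} to conclude that every column of $\vec{X}_0$ has all entries equal. The paper's proof is just a compressed version of this, deferring to the planar case (Proposition~\ref{prop:selfsim_plane_translate}); your column-by-column bookkeeping makes the decoupling into $p$ scalar problems explicit but introduces nothing different in substance.
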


\begin{proof}
The proof follows the same process as that for Proposition \ref{prop:selfsim_plane_translate}. In the higher codimension case we find 
$\vec{h}"(t) + \beta \vec{h}'(t) = \vec{C} = (\vec{c}_1,\ldots, \vec{c}_p)$ for real constants $c_i,$  $i = 1,\ldots p.$
Then from Lemma \ref{lem:constant_vec} and Lemma \ref{lem:nullspace_matrix}, the only solution to 
$$(-1)^{m+1}M^m\vec{X}_0 = \vec{C}$$ is if $\vec{X}_0 = (\vec{a}_1,\ldots, \vec{a}_p)$ for real constants $a_1,\ldots, a_p.$ 
  \end{proof}

\subsection{Higher codimension solutions for general initial data}\label{sec:highcodim_solutions}

A similar result to Theorems \ref{thm:planenodamp} and \ref{thm:planedamp} holds for polygons in higher codimensions. 

\begin{theorem}\label{thm:highcodim_nodamp}
   Given an initial polygon $\vec{X}_0$ with $n$ vertices in $\mathbb{R}^p$ such that it is expressed as in (\ref{eqn:highcodim_polygons}), and any $m\in \mathbb{N}$. The equation \eqref{eqn:polyflow} with $\beta =0$ and initial data $\vec{X}\left(0 \right) = \vec{X}_0$ has a unique solution $\vec{X}(t)$ for all time given by 

   \begin{equation*}
    \vec{X}(t)
             =  \left( \alpha_{10}(0) + a_{10}t\  \cdots\ \alpha_{p0}(0) + a_{p0}t \right)\vec{c}_0
             +
             \sum_{k=1}^{\lfloor \frac{n}{2} \rfloor}(\vec{c}_k\ \vec{s}_k)
            \begin{bmatrix} 
            \alpha_{1k}(t) & \cdots & \alpha_{pk}(t)\\
            \beta_{1k}(t) & \cdots & \beta_{pk}(t)\\
            \end{bmatrix},
\end{equation*}

where $$\alpha_{ik}(t) = \alpha_{ik}(0)\cos\left(\sqrt{-\lambda_{m,k}}t\right) + a_{1k}\sin\left(\sqrt{-\lambda_{m,k}}t\right)$$
and 
$$\beta_{ik}(t) = \beta_{ik}(0)\cos\left(\sqrt{-\lambda_{m,k}}t\right) + b_{1k}\sin\left(\sqrt{-\lambda_{m,k}}t\right)$$

where $a_{ik}$ and $b_{ik}$ are arbitrary constants for $i=1,\ldots, p$ and $k = 0,1,\ldots, \lfloor \frac{n}{2} \rfloor.$

\end{theorem}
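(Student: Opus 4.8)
The plan is to diagonalise the flow against the real orthogonal basis $\{\vec{c}_k, \vec{s}_k\}$ introduced in (\ref{eqn:ck})--(\ref{eqn:sk}) and thereby reduce \eqref{eqn:polyflow} with $\beta=0$ to a family of decoupled scalar second-order constant-coefficient ODEs, mirroring the proof of Theorem \ref{thm:planenodamp} but carried out column-by-column across the $p$ coordinate directions. The key preliminary observation, which I would establish first, is that each $\vec{c}_k$ and each $\vec{s}_k$ is itself a \emph{real} eigenvector of $(-1)^{m+1}M^m$ with the same real eigenvalue $\lambda_{m,k}$. This follows because $M$, and hence $M^m$, is real while the eigenvalues $\lambda_{m,k}$ are real (the matrix is real symmetric circulant, as recorded after Proposition \ref{prop:chow}): taking real and imaginary parts of $(-1)^{m+1}M^m P_k = \lambda_{m,k}P_k$ gives $(-1)^{m+1}M^m \vec{c}_k = \lambda_{m,k}\vec{c}_k$ and $(-1)^{m+1}M^m \vec{s}_k = \lambda_{m,k}\vec{s}_k$. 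Since the nonzero members of $\{\vec{c}_k, \vec{s}_k\}$ form a basis of $\mathbb{R}^n$, this permits free matching of coefficients.

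Next I would substitute the ansatz (\ref{eqn:highcodim_polygons}) into \eqref{eqn:polyflow}. Differentiating the coefficient functions twice on the left and applying the eigenvalue relation above on the right, then equating the coefficient of each basis vector $\vec{c}_k$, $\vec{s}_k$ in each coordinate $i$, yields the decoupled scalar equations $\alpha_{ik}''(t) = \lambda_{m,k}\,\alpha_{ik}(t)$ and $\beta_{ik}''(t) = \lambda_{m,k}\,\beta_{ik}(t)$ for every $i = 1,\dots,p$ and $k = 0,\dots,\lfloor \frac{n}{2}\rfloor$. For $k=0$ the eigenvalue vanishes, giving the affine solution $\alpha_{i0}(0)+a_{i0}t$ (and only the $\vec{c}_0$ term survives, since $\vec{s}_0 = \vec{0}$); for $k\ge 1$ we have $\lambda_{m,k}<0$, so $\sqrt{-\lambda_{m,k}}$ is real and positive and the general solution is the stated combination of $\cos\left(\sqrt{-\lambda_{m,k}}\,t\right)$ and $\sin\left(\sqrt{-\lambda_{m,k}}\,t\right)$.

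Imposing $\vec{X}(0)=\vec{X}_0$ then fixes the constant terms as $\alpha_{ik}(0)$ and $\beta_{ik}(0)$, while the amplitude constants $a_{ik},b_{ik}$ of the sine terms remain free, encoding the unprescribed initial velocity. Existence for all time, and uniqueness once these velocity parameters are chosen, follow from standard theory for linear constant-coefficient systems, with the explicit formula itself certifying global existence.

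I expect the only point requiring care to be indexing bookkeeping rather than any analytic difficulty: the real basis is indexed by $k$ up to $\lfloor \frac{n}{2}\rfloor$ rather than over the full range of complex eigenvectors $P_0,\dots,P_{n-1}$, so one must account for the degenerate vectors $\vec{s}_0 = \vec{0}$ (and $\vec{s}_{n/2}=\vec{0}$ when $n$ is even), whose corresponding $\beta$-modes simply do not appear. Getting this correspondence between the $n$ complex modes of Theorem \ref{thm:planenodamp} and the real $(\vec{c}_k,\vec{s}_k)$ modes exactly right, uniformly in the coordinate index $i$, is the main thing to verify.
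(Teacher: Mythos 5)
Your proposal is correct and follows essentially the same route as the paper's proof: expand $\vec{X}(t)$ in the real basis $\{\vec{c}_k,\vec{s}_k\}$, use the eigenvalue relation for $(-1)^{m+1}M^m$ to decouple the flow into the scalar equations $\alpha_{ik}'' = \lambda_{m,k}\alpha_{ik}$ and $\beta_{ik}'' = \lambda_{m,k}\beta_{ik}$, and solve these as linear ($k=0$) or trigonometric ($k\geq 1$) modes with the sine amplitudes left free. The only difference is cosmetic: you explicitly verify that $\vec{c}_k$ and $\vec{s}_k$ are real eigenvectors by taking real and imaginary parts of the relation for $P_k$ (using $\lambda_{m,k}=\lambda_{m,n-k}$), a fact the paper uses implicitly, and your bookkeeping remarks about $\vec{s}_0=\vec{0}$ and $\vec{s}_{n/2}=\vec{0}$ are consistent with the paper's treatment.
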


\begin{proof}

We follow a similar argument to Theorem \ref{thm:planenodamp} and the process given for the parabolic flow cases in  \cite{chow2007semidiscrete} for $m=1$ and in \cite{MM24} for general $m$.  
As given in the setup above and in (\ref{eqn:highcodim_polygons}), we consider a polygon given by
\begin{equation*}
    \vec{X}(t)  = (\vec{x}_1(t)\  \cdots\  \vec{x}_p(t))
             = \sum_{k=0}^{\lfloor \frac{n}{2} \rfloor}(\vec{c}_k\ \vec{s}_k)
            \begin{bmatrix} 
            \alpha_{1k}(t) & \cdots & \alpha_{pk}(t)\\
            \beta_{1k}(t) & \cdots & \beta_{pk}(t)\\
            \end{bmatrix}.
\end{equation*}

The second derivative is given by

\begin{equation*}
    \frac{d^2\vec{X}}{dt^2}  = \frac{d^2}{dt^2}(\vec{x}_1(t)\  \cdots\  \vec{x}_p(t))
             = \sum_{k=0}^{\lfloor \frac{n}{2} \rfloor}(\vec{c}_k\ \vec{s}_k)
            \begin{bmatrix} 
             \alpha''_{1k}(t) & \cdots & \alpha''_{pk}(t)\\
            \beta''_{1k}(t) & \cdots & \beta''_{pk}(t)\\
            \end{bmatrix}.
\end{equation*}

Since the polygon $\vec{X}(t)$ satisfies (\ref{eqn:polyflow}) with $\beta = 0$ then we have 

\begin{multline*}
     \sum_{k=0}^{\lfloor \frac{n}{2} \rfloor}(\vec{c}_k\ \vec{s}_k)
            \begin{bmatrix} 
            \alpha''_{1k}(t) & \cdots & \alpha''_{pk}(t)\\
            \beta''_{1k}(t) & \cdots & \beta''_{pk}(t)\\
            \end{bmatrix}
    = 
(-1)^{m+1}M^m\sum_{k=0}^{\lfloor \frac{n}{2} \rfloor}(\vec{c}_k\ \vec{s}_k)
            \begin{bmatrix} 
            \alpha_{1k}(t) & \cdots & \alpha_{pk}(t)\\
            \beta_{1k}(t) & \cdots & \beta_{pk}(t)\\
            \end{bmatrix}
   \\
   =
\sum_{k=0}^{\lfloor \frac{n}{2} \rfloor}\lambda_{m,k}(\vec{c}_k\ \vec{s}_k)
            \begin{bmatrix} 
            \alpha_{1k}(t) & \cdots & \alpha_{pk}(t)\\
            \beta_{1k}(t) & \cdots & \beta_{pk}(t)\\
            \end{bmatrix}.    
\end{multline*}

Therefore $\alpha''_{ik}(t) = \lambda_{m,k}\alpha_{ik}$ and $\beta''_{ik}(t) = \lambda_{m,k}\beta_{ik}$ for all $i=1, \ldots, p$ and $k = 0,\ldots, \lfloor \frac{n}{2}\rfloor.$

For $k=0$ we have $\lambda_{m,0} = 0$ and so 
$$\alpha_{i0}(t) = \alpha_{i0}(0) + a_{i0}t \text{ and } \beta_{i0}(t) = \beta_{i0}(0) + b_{i0}t,$$ for $i = 1, \ldots, p$ and where $u_{i0}, v_{i0}$ are constants. 

For $k = 1, \ldots, \lfloor \frac{n}{2} \rfloor,$ we have

$$\alpha_{ik}(t) = \alpha_{ik}(0)\cos\left(\sqrt{-\lambda_{m,k}}t\right) + a_{ik}\sin\left(\sqrt{-\lambda_{m,k}}t\right),$$
and similarly
$$\beta_{ik}(t) = \beta_{ik}(0)\cos\left(\sqrt{-\lambda_{m,k}}t\right) + b_{ik}\sin\left(\sqrt{-\lambda_{m,k}}t\right),$$
for constants $a_{ik}, b_{ik},$ and as such the result follows. 
\end{proof}

A damping coefficient $\beta>0$ in \eqref{eqn:polyflow} causes the polygon to shrink to a point in $\mathbb R^p$. Under appropriate rescaling, the solution converges to a polygon in $\mathbb R^p$ that is the affine image of a regular polygon in the plane. 

\begin{theorem}\label{thm:highcodim_planedamp}
Given an initial polygon $\vec{X}_0$ with $n$ vertices in $\mathbb{R}^p$ as given by
\begin{equation*}
   \vec{X}_0 = \sum_{k=1}^{\lfloor \frac{n}{2} \rfloor}(\vec{c}_k\ \vec{s}_k)
   \begin{bmatrix} 
           \alpha^0_{1k} & \cdots & \alpha^0_{pk}\\
          \beta^0_{1k} & \cdots & \beta^0_{pk}\\
      \end{bmatrix},
\end{equation*}
and any $m\in \mathbb{N}$, the equation \eqref{eqn:polyflow} with $\beta >0$ constant and initial data $\vec{X}\left(0 \right) = \vec{X}_0$ has a unique solution given by
\begin{equation} \label{E:highcodim_flowsolndamp}
\vec{X}(t) = (\vec{\alpha}_{10}(t) \cdots \vec{\alpha}_{p0}(t)) + \sum_{k=1}^{\lfloor \frac{n}{2}\rfloor}(\vec{c}_k\ \vec{s}_k)
\begin{bmatrix} 
           \alpha_{1k}(t) & \cdots & \alpha_{pk}(t)\\
          \beta_{1k}(t) & \cdots & \beta_{pk}(t)\\
      \end{bmatrix},
  \end{equation}

    where
    $$\alpha_{i0}(t) = \alpha_{i0}^0 + \frac{a_{i0}}{\beta} - \frac{a_{i0}}{\beta}e^{-\beta t},$$ for each $i=1,\ldots, p$ and where $a_{i0}$ are arbitrary constants. 

    Each $\alpha_{ik}(t)$ is given by
    \begin{equation} \label{E:alphaik}
  \alpha_{ik}\left( t\right) = \begin{cases}
        \alpha^0_{ik} e^{r_{+m, k} t} + a_{ik} (e^{r_{-m, k} t} - e^{r_{+m, k} t}) &\mbox{ for } \left| \lambda_{m, k} \right| < \frac{\beta^2}{4}\\
        (\alpha^0_{ik} + a_{ik} t) e^{-\frac{\beta}{2} t} &\mbox{ if } \lambda_{m, k} = - \frac{\beta^2}{4}\\
        e^{-\frac{\beta}{2} t} \left[ \alpha^0_{ik} \cos \left( \gamma_{m, k} t \right) + a_{ik} \sin\left( \gamma_{m, k} t\right) \right],     &\mbox{ for } \left| \lambda_{m, k} \right| > \frac{\beta^2}{4}    
  \end{cases}
  \end{equation}
  and similarly for $\beta_{ik}(t),$

\begin{equation} \label{E:betaik}
  \beta_{ik}\left( t\right) = \begin{cases}
        \beta^0_{ik} e^{r_{+m, k} t} + b_{ik} (e^{r_{-m, k} t} - e^{r_{+m, k} t}) &\mbox{ for } \left| \lambda_{m, k} \right| < \frac{\beta^2}{4}\\
        (\beta^0_{ik} + b_{ik} t) e^{-\frac{\beta}{2} t} &\mbox{ if } \lambda_{m, k} = - \frac{\beta^2}{4}\\
        e^{-\frac{\beta}{2} t} \left[ \beta^0_{ik} \cos \left( \gamma_{m, k} t \right) + b_{ik} \sin\left( \gamma_{m, k} t\right) \right],     &\mbox{ for } \left| \lambda_{m, k} \right| > \frac{\beta^2}{4}    
  \end{cases}
  \end{equation}
  where
  $$r_{\pm m, k} = - \frac{\beta}{2} \pm \sqrt{\left( \frac{\beta}{2}\right)^2 + \lambda_{m, k}} $$
  and
  $$\gamma_{m, k} = \sqrt{-\lambda_{m, k} - \left( \frac{\beta}{2}\right)^2} \mbox{.}$$
The constants $a_{ik}$ and $b_{ik}$ are arbitrary constants for $i=1,\ldots, p$ and $k = 1,\ldots, \lfloor \frac{n}{2} \rfloor.$

The solution exists for all time and converges exponentially to a point in $\mathbb R^p$.  If the dominant eigenvalue $\lambda_{m,k}$ satisfies $|\lambda_{m,d}| \leq \frac{\beta^2}{4},$ then under appropriate rescaling, the solution is asymptotic as $t\rightarrow \infty$ to a planar polygon in $\mathbb R^p$ with $n$ vertices which is an affine image of a regular polygon in $\mathbb R^2.$  If $|\lambda_{m,d}|> \frac{\beta^2}{4},$ the solution exhibits continued oscillating behaviour as it shrinks to a point.

\end{theorem}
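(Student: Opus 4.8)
The plan is to reduce the $n\times p$ system \eqref{eqn:polyflow} to a family of decoupled scalar second-order ODEs by expanding $\vec{X}(t)$ in the real orthogonal basis $\{\vec{c}_k,\vec{s}_k\}$, exactly as was done with the complex eigenbasis $\{P_k\}$ in the planar Theorem \ref{thm:planedamp} and in the undamped Theorem \ref{thm:highcodim_nodamp}. The one structural fact I would record first is that, because $(-1)^{m+1}M^m$ is a \emph{real} matrix and $P_k=\vec{c}_k+i\vec{s}_k$ is an eigenvector with the \emph{real} eigenvalue $\lambda_{m,k}$, both the real part $\vec{c}_k$ and the imaginary part $\vec{s}_k$ are themselves eigenvectors with the same eigenvalue $\lambda_{m,k}$. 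Hence, writing $\vec{X}(t)=\sum_{k=0}^{\lfloor n/2\rfloor}(\vec{c}_k\ \vec{s}_k)C_k(t)$ for time-dependent $2\times p$ coefficient matrices $C_k(t)$ as in \eqref{eqn:highcodim_polygons}, applying $(-1)^{m+1}M^m$ on the left simply multiplies the $k$th summand by $\lambda_{m,k}$.

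Differentiating twice in $t$ and substituting into \eqref{eqn:polyflow}, linear independence of the basis vectors forces each entry of $C_k(t)$ to satisfy the same scalar ODE. That is, for every coordinate $i=1,\dots,p$ and mode $k$,
$$\alpha_{ik}''(t)+\beta\,\alpha_{ik}'(t)=\lambda_{m,k}\,\alpha_{ik}(t),\qquad \beta_{ik}''(t)+\beta\,\beta_{ik}'(t)=\lambda_{m,k}\,\beta_{ik}(t).$$
These are constant-coefficient equations with characteristic roots $r_{\pm m,k}=-\tfrac{\beta}{2}\pm\sqrt{(\tfrac{\beta}{2})^2+\lambda_{m,k}}$, and the three cases in \eqref{E:alphaik} and \eqref{E:betaik} arise precisely from the sign of $(\tfrac{\beta}{2})^2+\lambda_{m,k}$ (overdamped, critically damped, underdamped). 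For $k=0$ one has $\lambda_{m,0}=0$, so the roots are $0$ and $-\beta$, producing the form $\alpha_{i0}(t)=\alpha_{i0}^0+\tfrac{a_{i0}}{\beta}(1-e^{-\beta t})$ recorded in the statement. Imposing $\vec{X}(0)=\vec{X}_0$ fixes the constant terms as the coordinates of $\vec{X}_0$ in the basis, while the first-derivative constants $a_{ik},b_{ik}$ remain free; uniqueness for prescribed initial position and velocity is standard for such a linear system.

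For the long-time behaviour, the key point is that $\beta>0$ and $\lambda_{m,k}<0$ for all $k\ge 1$ together force both characteristic roots of every mode $k\ge1$ to have strictly negative real part, so each such $\alpha_{ik},\beta_{ik}$ decays exponentially; only the $k=0$ term survives and $\vec{X}(t)$ converges exponentially to the single limiting point determined by the $k=0$ coefficients. To extract the limiting \emph{shape} I would rescale exactly as in the proof of Theorem \ref{thm:planedamp}: letting $\lambda_{m,d}$ be the slowest-decaying (dominant) eigenvalue actually present in $\vec{X}_0$, I would multiply the centred solution by $g(t)=e^{-r_{+m,d}t}$ when $|\lambda_{m,d}|<\tfrac{\beta^2}{4}$, by $e^{\beta t/2}/t$ in the critical case, and by $e^{\beta t/2}$ when $|\lambda_{m,d}|>\tfrac{\beta^2}{4}$. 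A direct comparison of exponential rates shows every non-dominant mode vanishes under this rescaling, leaving the $d$th mode $(\vec{c}_d\ \vec{s}_d)C_d$, which by construction is the affine image under \eqref{eqn:linear_transform}-type maps of the planar regular polygon $P_d$; in the last case the surviving coefficients retain their $\cos(\gamma_{m,d}t),\sin(\gamma_{m,d}t)$ factors, giving the claimed persistent oscillation.

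The routine part is solving the scalar ODEs and matching initial data; the genuinely delicate step is the rescaling argument for the limiting shape. There one must (i) correctly identify the dominant surviving mode, including the degenerate situation where $\vec{X}_0$ is orthogonal to $P_1$ so that one must pass to the next eigenvalue $\lambda_{m,d}$ with $d\ge 2$, and (ii) verify that the chosen $g(t)$ simultaneously kills all faster-decaying modes while keeping the dominant one bounded and nonzero. This is precisely where the ordering $\lambda_{m,k}<\lambda_{m,1}<0$ of the eigenvalues established in Section \ref{S:background} is used. The argument is otherwise a direct transcription of the planar proof, now carried out on the two real coordinate families $\alpha_{ik}$ and $\beta_{ik}$ in parallel across the $p$ ambient coordinates.
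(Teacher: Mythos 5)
Your proposal is correct and follows essentially the same route as the paper's proof: expansion in the real basis $\{\vec{c}_k,\vec{s}_k\}$, reduction to the decoupled scalar ODEs $\alpha_{ik}''+\beta\alpha_{ik}'=\lambda_{m,k}\alpha_{ik}$ (and likewise for $\beta_{ik}$), the same three-case solution by the sign of $(\beta/2)^2+\lambda_{m,k}$, and the same rescaling factors $e^{-r_{+m,d}t}$, $e^{\beta t/2}/t$, $e^{\beta t/2}$ with the same fallback to the next dominant mode when $\vec{X}_0$ is orthogonal to $P_1$. Your explicit observation that $\vec{c}_k$ and $\vec{s}_k$ are individually eigenvectors of the real matrix $(-1)^{m+1}M^m$ is a detail the paper uses implicitly, but it does not change the argument.
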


\begin{proof}
   Since the polygon $\vec{X}(t)$ is to satisfy (\ref{eqn:polyflow}) with $\beta>0,$ we obtain

\begin{multline*}
     \frac{d^2\vec{X}}{dt^2} + \beta \frac{d\vec{X}}{dt} = \sum_{k=0}^{\lfloor \frac{n}{2} \rfloor}(\vec{c}_k\ \vec{s}_k)
            \begin{bmatrix} 
            \alpha''_{1k}(t) +\beta \alpha'_{1k}(t) & \cdots & \alpha''_{pk}(t)+\beta \alpha'_{pk}(t)\\
            \beta''_{1k}(t) +\beta \beta'_{1k}(t)& \cdots & \beta''_{pk}(t)+\beta \beta'_{pk}(t)\\
            \end{bmatrix}
            \\
   =
   (-1)^{m+1}M^m\sum_{k=0}^{\lfloor \frac{n}{2} \rfloor}(\vec{c}_k\ \vec{s}_k)
            \begin{bmatrix} 
            \alpha_{1k}(t) & \cdots & \alpha_{pk}(t)\\
            \beta_{1k}(t) & \cdots & \beta_{pk}(t)\\
            \end{bmatrix}
            =
\sum_{k=0}^{\lfloor \frac{n}{2} \rfloor}\lambda_{m,k}(\vec{c}_k\ \vec{s}_k)
            \begin{bmatrix} 
            \alpha_{1k}(t) & \cdots & \alpha_{pk}(t)\\
            \beta_{1k}(t) & \cdots & \beta_{pk}(t)\\
            \end{bmatrix}.    
\end{multline*}

Therefore we are left to solve
$$\alpha''_{ik}(t) +\beta \alpha'_{ik}(t) = \lambda_{m,k}\alpha_{ik}(t)$$
and 
$$\beta''_{ik}(t) +\beta \beta'_{ik}(t) = \lambda_{m,k}\beta_{ik}(t),$$
for $i =1,\ldots, p$ and $k = 0, \ldots \lfloor \frac{n}{2}\rfloor.$

This leads to expression for $\alpha_{i0}(t)$ as stated in the theorem, and the cases for $\alpha_{ik}(t)$ and $\beta_{ik}(t)$ as given in (\ref{E:alphaik}) and (\ref{E:betaik}).

With the same reasoning as given for the planar case in Theorem \ref{thm:planedamp}, as $t\to \infty$ then each vertex of the polygon will converge to the same point in $\mathbb R^p$ given by $\left(\alpha_{10}^0 + \frac{a_{10}}{\beta},  \cdots,  \alpha_{p0}^0 + \frac{a_{p0}}{\beta}\right).$

To determine the limiting shape of the polygon as it shrinks, again we consider an appropriate rescaling and translation of $\vec{X}(t)$ given by
\begin{equation}\label{eqn:highcodim_shape}
    \vec{Y}(t) = g(t)\left(\vec{X}(t) - (\vec{\alpha}_{10}(t) \cdots \vec{\alpha}_{p0}(t))\right),
    \end{equation}

and where the scaling function is chosen based on the chosen damping term $\beta$ and the relationship of eigenvalues $\lambda_{m,k}$ to this damping term, that then determine the expression for the coefficients $\alpha_{ik}(t)$ and $\beta_{ik}(t).$

If the dominant eigenvalue $\lambda_{m,1}$ satisfies $|\lambda_{m,1}| < \frac{\beta^2}{4},$ then we choose scaling function $g(t) = e^{-r_{+m,1}t}.$ By similar reasoning as in Theorem \ref{thm:planedamp}, we find that

\begin{equation*}
    \lim_{t\to \infty}\vec{Y}(t) = (\vec{c}_1\ \vec{s}_1) 
    \begin{bmatrix}
        \alpha_{11}^0 - a_{11} & \cdots &\alpha_{p1}^0 - a_{p1}\\
        \beta_{11}^0 - b_{11} & \cdots &\beta_{p1}^0 - b_{p1}.\\
    \end{bmatrix}.
\end{equation*}

We have $P_1 = (\vec{c}_1\ \vec{s}_k)$ which is a regular polygon in plane $\mathbb R^2.$ Therefore the limit of $\vec{Y}(t)$ is the polygon $P_1$ where each vertex is mapped to to a vertex in $\mathbb R^p$ by the linear map $T_1:\mathbb R^2 \to \mathbb R^p$ given by

\begin{equation*}
T_1(x,y) = (x \ \ y)\begin{bmatrix}
        \alpha_{11}^0 - a_{11} & \cdots &\alpha_{p1}^0 - a_{p1}\\
        \beta_{11}^0 - b_{11} & \cdots &\beta_{p1}^0 - b_{p1}.\\
    \end{bmatrix}.
\end{equation*}

$T_1$ is a linear transformation, and since $P_1$ is in the plane, the image of $P_1$ under $T_1$ is two dimensional and therefore planar.

If $\lambda_{m,1} = -\frac{\beta^2}{4}$ then we choose the scaling factor to be $g(t) = \frac{e^{\frac{\beta}{2}t}}{t}$ in (\ref{eqn:highcodim_shape}). Taking the limit gives

\begin{equation*}
\lim_{t\to\infty}\vec{Y}(t) = (\vec{c}_1\ \vec{s}_1) 
    \begin{bmatrix}
         a_{11} & \cdots & a_{p1}\\
    b_{11} & \cdots & b_{p1}.\\
    \end{bmatrix}.
\end{equation*}

Therefore considering the linear map $T_2:\mathbb R^2 \to \mathbb R^p$ given by

\begin{equation*}
T_2(x,y) = (x \ \ y)\begin{bmatrix}
        a_{11} & \cdots & a_{p1}\\
        b_{11} & \cdots & b_{p1}.\\
    \end{bmatrix}
\end{equation*} we have the same result as above where the limiting shape is given by the image of $P_1$ in $\mathbb R^2$ mapped to $\mathbb R^p$ by $T_2.$

Given $T_1$ and $T_2$ are linear transformations from $\mathbb R^2$, the resulting polygon in $\mathbb R^p$ that is the image of these maps, is planar. 

If $|\lambda_{m,1}|>\frac{\beta^2}{4}$ then we choose the scaling factor $g(t) = e^{\frac{\beta}{2}t}$ for the expression in (\ref{eqn:highcodim_shape}) which gives

\begin{equation*}
    \vec{Y}(t) = \sum_{k=1}^{\lfloor \frac{n}{2}\rfloor}(\vec{c}_k\ \vec{s}_k)
            \begin{bmatrix} 
            \alpha_{1k}^0\cos(\gamma_{m,k}t)+a_{1k}\sin(\gamma_{m,k}t) & \cdots & \alpha_{pk}^0\cos(\gamma_{m,k}t)+a_{pk}\sin(\gamma_{m,k}t)\\
           \beta_{1k}^0\cos(\gamma_{m,k}t) +b_{1k}\sin(\gamma_{m,k}t)& \cdots & \beta_{pk}^0\cos(\gamma_{m,k}t)+b_{pk}\sin(\gamma_{m,k}t)\\
            \end{bmatrix}
\end{equation*}
 and as such, the polygon exhibits continued oscillating behaviour.

In the case of $\alpha_{i1}(0) =0$ and $\beta_{i1}(0) = 0$ for all $i = 1,2,\ldots p,$ we instead scale the polygon by an expression involving $\lambda_{m,d}$ such that $\lambda_{m,d}$ is the next dominant eigenvalue where we have non-zero $\alpha_{id}, \beta_{id}$, and carry out the same process as described above depending on the relationship $\lambda_{m,d}$ has with the damping term $\beta.$

\end{proof}

\begin{remark}
    We may also examine \emph{ancient} solutions of our hyperbolic polyharmonic flows of polygons by considering limits as $t\rightarrow -\infty$ in solution formulae, which always make sense in this setting. In the $\beta>0$ case for planar polygons, we get different outcomes based on the relationships of the least and most dominant eigenvalues with $\beta,$ as well as the arbitrary constants $a_k$ chosen in the expression (\ref{E:flowsolndamp}) and (\ref{E:alphak}). 

    We demonstrate such solutions by following a similar argument as given in Theorem \ref{thm:planedamp} by rescaling and translating $\vec{X}(t)$ as follows
    \begin{equation*}
     \vec{Y}(t) = g(t)(\vec{X}(t) - \vec{a}_0(0)),
\end{equation*} for appropriate rescaling factor $g(t).$

    If we have $|\lambda_{m,\lfloor \frac{n}{2}\rfloor}|\leq \frac{\beta^2}{4}$ and if $a_k = 0$ for all $k \neq  \lfloor \frac{n}{2}\rfloor,$ then we observe asymptotic convergence to the affine transformation of the regular polygon $P_{\lfloor \frac{n}{2}\rfloor}$ as $t\to -\infty.$ We see this by choosing scaling factor $g(t) = e^{-r_{+m,\lfloor \frac{n}{2}\rfloor}t}$ if $|\lambda_{m,\lfloor \frac{n}{2}\rfloor}| < \frac{\beta^2}{4},$ 
    
    and note
    $$\lim_{t\to -\infty} \vec{Y}(t) = \left(\alpha^0_{\lfloor \frac{n}{2}\rfloor}- a_{\lfloor \frac{n}{2}\rfloor}\right)P_{\lfloor \frac{n}{2}\rfloor} + \left(\alpha^0_{n-\lfloor \frac{n}{2}\rfloor}- a_{n-\lfloor \frac{n}{2}\rfloor}\right)P_{n-\lfloor \frac{n}{2}\rfloor}.$$ 

If $|\lambda_{m,\lfloor \frac{n}{2}\rfloor}| = \frac{\beta^2}{4}$ we choose $g(t) = \frac{e^{\frac{\beta}{2}t}}{t}$ and obtain
$$\lim_{t\to -\infty} \vec{Y}(t) = a_{\lfloor \frac{n}{2}\rfloor}P_{\lfloor \frac{n}{2}\rfloor} + a_{n-\lfloor \frac{n}{2}\rfloor}P_{n-\lfloor \frac{n}{2}\rfloor}.$$

When $n$ is even, this will be the straight line interval of $\frac{n}{2}$ overlapping polygon edges. If this original polygon is orthogonal to $P_{\lfloor \frac{n}{2}\rfloor}$ then we consider the next least dominant eigenvalue and get a similar result, provided the remaining arbitrary constants $a_k$ are zero. 

If we have at least $a_1$ or $a_{n-1}$ not equal to zero, where $\lambda_{m,1}$ is the dominant eigenvalue and $|\lambda_{m,1}| \leq \frac{\beta^2}{4},$ then choosing a rescaling factor of $g(t) = e^{-r_{-m,1}t}$ if $|\lambda_{m,1}| < \frac{\beta^2}{4},$ or $g(t) = \frac{e^{\frac{\beta}{2}t}}{t}$ if $|\lambda_{m,1}| = \frac{\beta^2}{4},$ demonstrates asymptotic convergence to an affine transformation of regular polygon $P_1$ as $t\to -\infty,$ given by
\begin{equation*}
    \lim_{t \to -\infty}\vec{Y}(t) = a_1P_1 + a_{n-1}P_{n-1}.
\end{equation*}

A similar result holds for solutions in higher codimension using the scaled polygon 
\begin{equation*}
    \vec{Y}(t) = g(t)\left(\vec{X}(t) - \left(\vec{\alpha}_{10}(0)\ \ldots \ \vec{\alpha}_{p0}(0)\right)\right),
\end{equation*}
and $g(t)$ chosen as described above. Here we need to consider the values of the arbitrary constants $a_{ik}$ and $b_{ik}$ in the general solution given in (\ref{E:highcodim_flowsolndamp}). Whether they are zero or not will determine the limiting behaviour of the polygon as $t\to -\infty$ and whether we have asymptotic convergence to an affine image of $P_1$ or $P_{\lfloor \frac{n}{2} \rfloor }$ in $\mathbb R^p,$ with the observed outcome based on the same reasoning as discussed for the plane polygon case. 

\end{remark}

\section{Semi-discrete geometric flow between polygons} \label{S:Yau}

In their 2009 paper \cite{lin2009evolving}, Lin and Tsai described Yau's curvature difference flow whose objective is to evolve one curve to another, either in finite time or in infinite time, possibly up to an isometry, using a parabolic flow.  In \cite{MM24} we considered a first order in time semi-discrete polyharmonic flow analogue of Yau's curvature difference flow.  Here we consider a second order in time analogue that includes a linear damping term.

\begin{theorem} \label{T:Yau}
Given a target polygon $\vec{Y}$ with $n$ vertices in $\mathbb R^p$, for $p\in \mathbb N, p \geq 2,$ and any fixed $\beta>0$, all solutions $\vec{X}(t)$ to 
the second order semi-discrete Yau difference flow, 
\begin{equation}\label{eqn:Yau_diff_eqn}\tag{$SYDF_m$}
    \frac{d^2\vec{X}}{dt^2} + \beta \frac{d \vec{X}}{dt}= (-1)^{m+1}M^m\left[\vec{X}(t) - \vec{Y}\right]
\end{equation} 
with any initial polygon $\vec{X}(0) =\vec{X}_0$ with $n$ vertices, exists for all time and converges as $t\rightarrow \infty$ to $\vec{Y}$.
\end{theorem}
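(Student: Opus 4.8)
The plan is to linearise the problem by the obvious change of unknown. Set $\vec{Z}(t) = \vec{X}(t) - \vec{Y}$. Since the target polygon $\vec{Y}$ is independent of $t$, we have $\vec{Z}\,' = \vec{X}\,'$ and $\vec{Z}\,'' = \vec{X}\,''$, while the right-hand side of \eqref{eqn:Yau_diff_eqn} is exactly $(-1)^{m+1}M^m \vec{Z}$. Hence $\vec{Z}$ solves the homogeneous flow \eqref{eqn:polyflow} with the same $\beta>0$, and the inhomogeneous Yau problem reduces to the damped homogeneous flow already studied. In particular the claimed convergence $\vec{X}(t)\to\vec{Y}$ is equivalent to $\vec{Z}(t)\to \vec{0}$. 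Existence for all time is then immediate: the reduced system has constant coefficients, so the explicit solution formulae of Theorems \ref{thm:planedamp} and \ref{thm:highcodim_planedamp}, applied to $\vec{Z}$, are defined for every $t$.

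Next I would run the mode decomposition. Writing $\vec{Z}(t)=\sum_k \zeta_k(t)P_k$ in the planar case (or using the real basis $\{\vec{c}_k,\vec{s}_k\}$ when $p>2$), each coefficient satisfies the scalar equation $\zeta_k''+\beta\zeta_k' = \lambda_{m,k}\zeta_k$. For $k\neq 0$ we have $\lambda_{m,k}<0$, so the characteristic roots $r_{\pm m,k} = -\tfrac{\beta}{2}\pm\sqrt{(\beta/2)^2+\lambda_{m,k}}$ are either a complex-conjugate pair with real part $-\tfrac{\beta}{2}<0$ or two distinct negative reals; in every case both roots have strictly negative real part precisely because $\beta>0$. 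Thus every mode with $k\neq 0$ decays exponentially to $0$, which is exactly the decay already established in the proof of Theorem \ref{thm:highcodim_planedamp}. This disposes of the entire ``shape'' part of $\vec{Z}$.

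The one delicate point — and the step I expect to be the crux — is the $P_0$ mode, because $\lambda_{m,0}=0$ removes the restoring term. Here $\zeta_0''+\beta\zeta_0'=0$, so $\zeta_0(t)\to \zeta_0(0)+\zeta_0'(0)/\beta$, a limit that is generally nonzero. Since $P_0=(1,\dots,1)^T$, this surviving mode is precisely a rigid translation of the whole polygon, i.e.\ the evolution of its centroid. To force $\vec{Z}(t)\to\vec{0}$, hence $\vec{X}(t)\to\vec{Y}$ on the nose rather than merely up to a translation, one must arrange $\zeta_0(0)+\zeta_0'(0)/\beta=0$; writing $\vec{Y}=\sum_k \eta_k P_k$, this is the compatibility condition $a_0 = \beta(\eta_0-\alpha_0^0)$ in the notation of Theorem \ref{thm:planedamp}, tying the $P_0$-component of the prescribed initial velocity to the centroids of $\vec{X}_0$ and $\vec{Y}$. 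I would make this explicit and note that, consistent with the ``up to an isometry'' formulation of Yau's problem, without it the convergence is to a fixed translate of $\vec{Y}$ along $P_0$.

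Finally I would assemble the pieces: combining the $k\neq0$ exponential decay with the (compatibly chosen) vanishing $P_0$ mode yields $\vec{Z}(t)\to\vec{0}$, and reading off the slowest surviving rate $\max_{k\neq0}\operatorname{Re} r_{+m,k}$ shows the convergence $\vec{X}(t)\to\vec{Y}$ is in fact exponential. The only genuine obstacle is the centroid/$P_0$ bookkeeping; everything else is inherited essentially verbatim from the homogeneous damped analysis of the previous section.
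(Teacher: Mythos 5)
Your proposal is correct and follows essentially the same route as the paper's own proof: both introduce the difference polygon $\vec{Z}(t)=\vec{X}(t)-\vec{Y}$, observe that it satisfies the homogeneous damped flow \eqref{eqn:polyflow} and hence inherits the explicit mode-decomposition solution of Theorem \ref{thm:highcodim_planedamp}, with exponential decay of all $k\neq 0$ modes since $\beta>0$. Your treatment of the $P_0$ (centroid) mode is also exactly the paper's: the free constant must satisfy $\alpha_{i0}^0 + a_{i0}/\beta = y^{(1)}_{i0}$ to get convergence to $\vec{Y}$ itself, and otherwise the solution converges only to a translate of $\vec{Y}$, a caveat the paper states in the final sentence of its proof.
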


\begin{remark}
\begin{enumerate}
    \item Recalling the operator $(-1)^{m+1} M^m$ is a higher order linear curvature-type operator, the right hand side of \eqref{eqn:Yau_diff_eqn} is precisely the difference in this type of curvature of $\vec{X}$ as compared with that of $\vec{Y}$.  When $\vec{X} \equiv \vec{Y}$, the right hand side is identically equal to zero and $\vec{X} \equiv \vec{Y}$ is a stationary solution.
    \item Of particular interest is that convergence to $\vec{Y}$ is obtained for \emph{any} initial polygon.
    \item Above we say `all solutions' because the problem as written is underdetermined.  As earlier, to obtain a unique solution we must provide some additional data, for example, an initial `velocity' or a specific polygon through which the solution transitions at a specific time.  Our result says whatever the initial polygon and additional data, we will always have long time existence, and convergence as $t\rightarrow \infty$ to $\vec{Y}$.
    \item If, instead of having the same number of vertices, one of $\vec{X}_0$ or $\vec{Y}$ has fewer vertices, we can simply duplicate vertices or add points on the line segments joining vertices to create initial and target polygons with the same number of `vertices'.  This process is described in more detail in \cite{MM24}.
\end{enumerate}
    
\end{remark}

\begin{proof}
As in \cite{MM24} we set up a \textit{difference polygon}, $\vec{Z}(t) = \vec{X}(t) - \vec{Y}.$  

Write
\begin{equation*}
    \vec{Y} = \sum_{k=0}^{\lfloor \frac{n}{2}\rfloor}(\vec{c}_k\ \vec{s}_k) 
     \begin{bmatrix} 
            y^{(1)}_{1k} & \cdots & y^{(1)}_{pk}\\
            y^{(2)}_{1k} & \cdots & y^{(2)}_{pk}\\
            \end{bmatrix}\\,
\end{equation*}
and 
\begin{equation*}
   \vec{X}_0 = \sum_{k=0}^{\lfloor \frac{n}{2} \rfloor}(\vec{c}_k\ \vec{s}_k)
   \begin{bmatrix} 
           \alpha^0_{1k} & \cdots & \alpha^0_{pk}\\
          \beta^0_{1k} & \cdots & \beta^0_{pk}\\
      \end{bmatrix}.
\end{equation*}

Then 
\begin{equation*}
   \vec{Z}_0 = \sum_{k=0}^{\lfloor \frac{n}{2} \rfloor}(\vec{c}_k\ \vec{s}_k)
   \begin{bmatrix} 
           \alpha^0_{1k} - y^{(1)}_{1k} & \cdots & \alpha^0_{pk} - y^{(1)}_{pk}\\
          \beta^0_{1k} - y^{(2)}_{1k} & \cdots & \beta^0_{pk}- y^{(2)}_{pk}\\
      \end{bmatrix}.
\end{equation*}

Since $\vec{Y}$ is constant, it follows that $\vec{Z}(t)$ satisfies equation \eqref{eqn:polyflow} and therefore, by Theorem \ref{thm:highcodim_planedamp} has solution
\begin{equation*}
\vec{Z}(t) = \sum_{k=0}^{\lfloor \frac{n}{2} \rfloor}(\vec{c}_k\ \vec{s}_k)
   \begin{bmatrix} 
           z^{(1)}_{1k}(t) & \cdots & z^{(1)}_{pk}(t)\\
          z^{(2)}_{1k}(t) & \cdots & z^{(2)}_{pk}(t)\\
      \end{bmatrix},
\end{equation*}

where for each $i = 1,\ldots, p,$ 

\begin{equation}
    z^{(1)}_{i0}(t) = \left(\alpha^0_{i0} - y^{(1)}_{i0}\right) + \frac{a_{i0}}{\beta} - \frac{a_{i0}}{\beta}e^{-\beta t},
\end{equation}
where $a_{i0}$ are arbitrary constants,

and for $k=1, 2, \ldots, n-1$,
\begin{equation}\label{E:yaucoeffs_1}
    z^{(1)}_{ik}(t) = \begin{cases} 
    \left( \alpha_{ik}^0 - y^{(1)}_{ik} \right) e^{r_{-m, k}t} + a_{ik} \left(e^{r_{+m, k} t} - e^{r_{-m, k}t}\right) & \mbox{ for }\left| \lambda_{m, k} \right| < \frac{\beta^2}{4}\\
    \left(  \alpha_{ik}^0 - y^{(1)}_{ik}  + a_{ik}\, t\right) e^{r_{m, k} t}& \mbox{ if } \lambda_{m, k} = - \frac{\beta^2}{4}\\
    e^{-\frac{\beta}{2} t}\left[ \left( \alpha_{ik}^0 - y^{(1)}_{ik} \right) \cos \left( \gamma_{m, k} t\right) + a_{ik} \sin \left( \gamma_{m, k} t\right) \right] & \mbox{ for } \left| \lambda_{m, k} \right| > \frac{\beta^2}{4}
\end{cases}
\end{equation}
and
\begin{equation}\label{E:yaucoeffs_2}
    z^{(2)}_{ik}(t) = \begin{cases} 
    \left( \beta_{ik}^0 - y^{(2)}_{ik} \right) e^{r_{-m, k}t} + b_{ik} \left(e^{r_{+m, k} t} - e^{r_{-m, k}t}\right) & \mbox{ for }\left| \lambda_{m, k} \right| < \frac{\beta^2}{4}\\
    \left(  \beta_{ik}^0 - y^{(2)}_{ik}  + b_{ik}\, t\right) e^{r_{m, k} t}& \mbox{ if } \lambda_{m, k} = - \frac{\beta^2}{4}\\
    e^{-\frac{\beta}{2} t}\left[ \left( \beta_{ik}^0 - y^{(2)}_{ik} \right) \cos \left( \gamma_{m, k} t\right) + b_{ik} \sin \left( \gamma_{m, k} t\right) \right] & \mbox{ for } \left| \lambda_{m, k} \right| > \frac{\beta^2}{4}
\end{cases}
\end{equation},
where 
$$r_{\pm m, k} = - \frac{\beta}{2} \pm \sqrt{\left( \frac{\beta}{2} \right)^2 + \lambda_{m, k}} \mbox{, } \gamma_{m, k} = \sqrt{ - \lambda_{m, k} - \left( \frac{\beta}{2} \right)^2} \mbox{.}$$
The constants $a_{ik}$ and $b_{ik}$ are completely free.
Hence
$$
\vec{X}(t) = \left[ \alpha_{10}^0 + \frac{a_{10}}{\beta} - \frac{a_{10}}{\beta} e^{-\beta\, t} \ \cdots \  \alpha_{p0}^0 + \frac{a_{p0}}{\beta} - \frac{a_{p0}}{\beta} e^{-\beta\, t}\right] \vec{c}_0 + \sum_{k=1}^{\lfloor\frac{n}{2}\rfloor} (\vec{c}_k\ \vec{s}_k)\begin{bmatrix} 
           z^{(1)}_{1k}(t)+y^{(1)}_{1k} & \cdots & z^{(1)}_{pk}(t)+y^{(1)}_{pk}\\
          z^{(2)}_{1k}(t)+y^{(2)}_{1k} & \cdots & z^{(2)}_{pk}(t)+y^{(2)}_{pk}\\
      \end{bmatrix} \mbox{.}
$$
In view of \eqref{E:yaucoeffs_1} and \eqref{E:yaucoeffs_2}, provided $a_{i0}$ is chosen such that 
$$\alpha_{i0}^0 + \frac{a_{i0}}{\beta} = y^{(1)}_{i0}$$ for all $i=1,\ldots, p,$ we will have $\vec{X}(t) \rightarrow \vec{Y}$ as $t\rightarrow \infty$. Otherwise $\vec{X}$ will converge to a translate of $\vec{Y}$ as $t\to \infty.$
\end{proof}

The case of a moving target polygon $\vec{Y}(t)$ may be handled using Green's functions as follows.  Considering polygons in the plane, our equation can now be written as
\begin{equation} \label{E:Yaunonhom}
  \frac{d^2 \vec{X}}{dt^2} + \beta \frac{d\vec{X}}{dt} = \left( -1\right)^{m+1} M^m \left[ \vec{X}(t) - \vec{Y}(t) \right]
  \end{equation}
where $\vec{Y}(t) = \sum_{k=0}^{n-1} y_k(t) P_k$.  

Specifically, in view of Proposition \ref{prop:chow} we may seek a solution to \eqref{E:Yaunonhom} of the form
\begin{equation} \label{E:Yausoln}
  \vec{X}(t) = \sum_{k=0}^{n-1} \alpha_k(t) P_k \mbox{.}
\end{equation}
Then, by linearity of \eqref{E:Yaunonhom}, the coefficients of the evolving polygon $\vec{X}(t)$ satisfy
\begin{equation} \label{E:Yaucoeff}
  \alpha_k''(t) + \beta\, \alpha_k'(t) = \lambda_{m, k} \left[ \alpha_k(t) - y_k(t) \right] \mbox{.}
  \end{equation}
with solution for $k=0$
$$\alpha_0(t) = c_0 e^{-\beta\, t} + d_0$$
with arbitrary constants $c_0, d_0$. The given data will put at least one condition on these constants, so only in special cases will the translation term approach that of $\vec{Y}$.

For $k=1, 2, \ldots, n-1,$ writing as usual
$$r_{\pm m, k} = - \frac{\beta}{2} \pm \sqrt{ \left( \frac{\beta}{2} \right)^2 + \lambda_{m, k}} \mbox{,}$$
we have the independent real solutions to the homogeneous equation
$$\alpha_k^{(1)}(t) = e^{r_{-m, k} t } \mbox{ and } \alpha_k^{(2)}(t) = e^{r_{+m, k} t} \mbox{ for } \left| \lambda_{m, k} \right| < \frac{\beta^2}{4} \mbox{,}$$
$$\alpha_k^{(1)}(t) = e^{r_{m, k} t} \mbox{ and } \alpha_k^{(2)}(t) = t\, e^{r_{m, k} t}\mbox{ if } \lambda_{m, k} = -\frac{\beta^2}{4}$$
and
$$\alpha_k^{(1)}(t) = e^{-\frac{\beta}{2} t } \cos\left( \gamma_{m, k} t\right) \mbox{ and } \alpha_k^{(2)}(t) = e^{-\frac{\beta}{2} t } \sin\left( \gamma_{m, k} t\right) \mbox{ for } \left| \lambda_{m, k} \right| > \frac{\beta^2}{4} \mbox{,}$$
where $\gamma_{m, k} = \sqrt{ - \lambda_{m, k} - \left( \frac{\beta}{2}\right)^2}$.
These allow us to write down a Green's function corresponding to each coefficient function:
$$G_k\left( x, t\right) = \frac{1}{\det W_k\left( t\right)} \left[ \alpha_k^{(1)}( x ) \alpha_k^{(2)}( t ) - \alpha_k^{(1)}( t ) \alpha_k^{(2)}( x) \right]\mbox{,}$$
where
$$W_k\left( t\right) = \left[ \begin{matrix}
    \alpha_k^{(1)}(t) & \alpha_k^{(2)}(t)\\
    \frac{d}{dt} \alpha_k^{(1)}(t) & \frac{d}{dt} \alpha_k^{(2)}(t)\\
\end{matrix} \right] \mbox{.}$$
The solution to the nonhomogeneous equation \eqref{E:Yaucoeff} is then
$$\alpha_k(t) = c_{-k} \alpha_k^{(1)}(t) + c_{+k} \alpha_k^{(2)}(t) + \int_0^t G_k\left( x, t \right) y_k(x) dx \mbox{,}$$
where $c_{\pm k}$ are arbitrary constants.  The solution polygon is then $$\vec{X}(t) = \left[ c_0 e^{-\beta\, t} + d_0 \right] P_0 + \sum_{k=1}^{n-1} \left[ c_{-k} \alpha_k^{(1)}(t) + c_{+k} \alpha_k^{(2)}(t) + \int_0^t G_k\left( x, t \right) y_k(x) dx \right] P_k \mbox{.}$$
The limiting solution will not translate as per $\vec{Y}$ unless $d_0= y_0$ is constant.  In view of the form of the coefficients of the other $P_k$, whether the solution will approach $\vec{Y}(t)$ as $t\rightarrow \infty$ depends precisely on the behaviour of the terms involving the Green's functions.

\begin{figure*}

  \begin{subfigure}[b]{0.31\textwidth}
    \includegraphics[width=\textwidth]{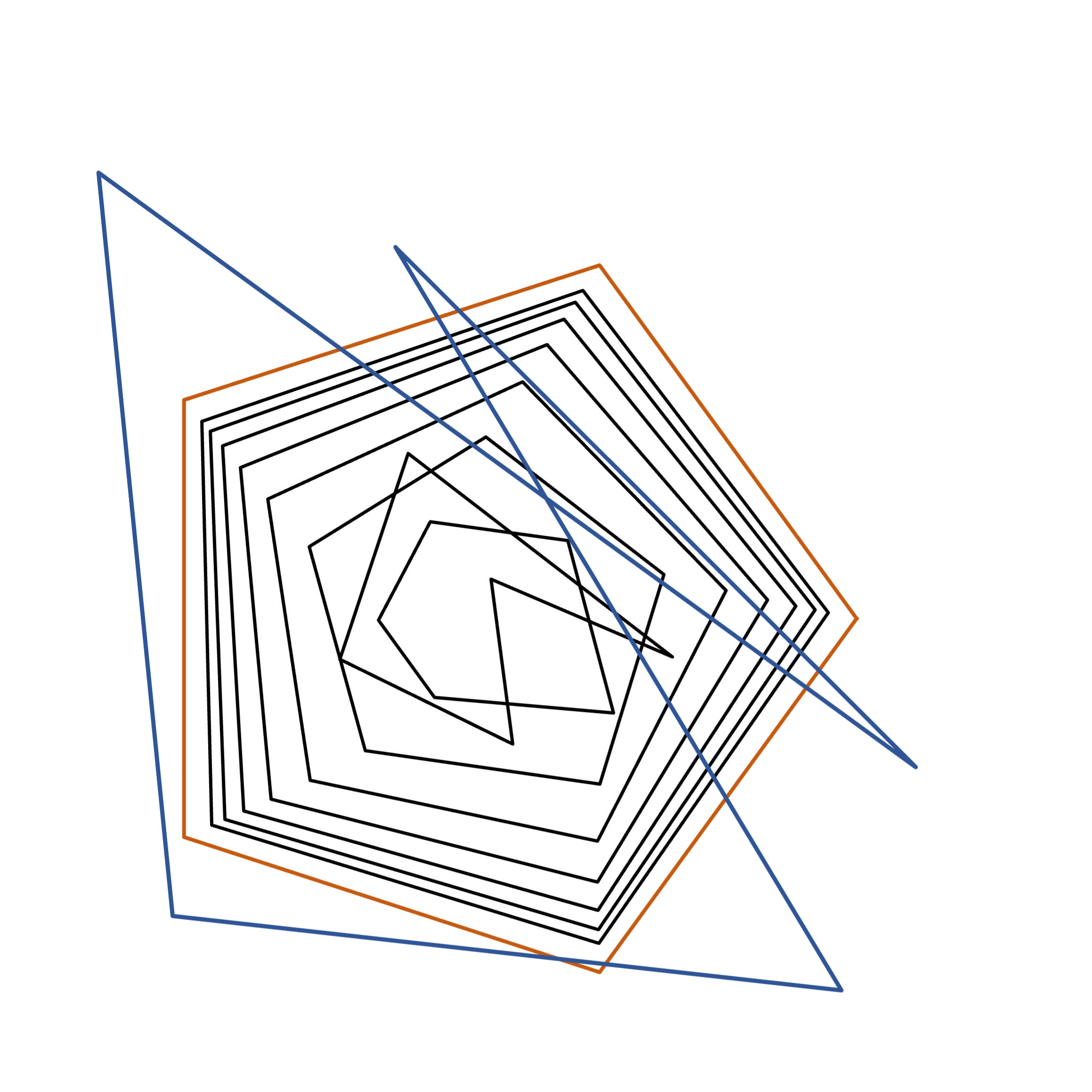}
    \caption{$m=1,\ \beta=4,\ $all $a_k=0$ }
    \label{fig:Yau_1_m=1}
  \end{subfigure}
  \hfill
  \begin{subfigure}[b]{0.31\textwidth}
    \includegraphics[width=\textwidth]{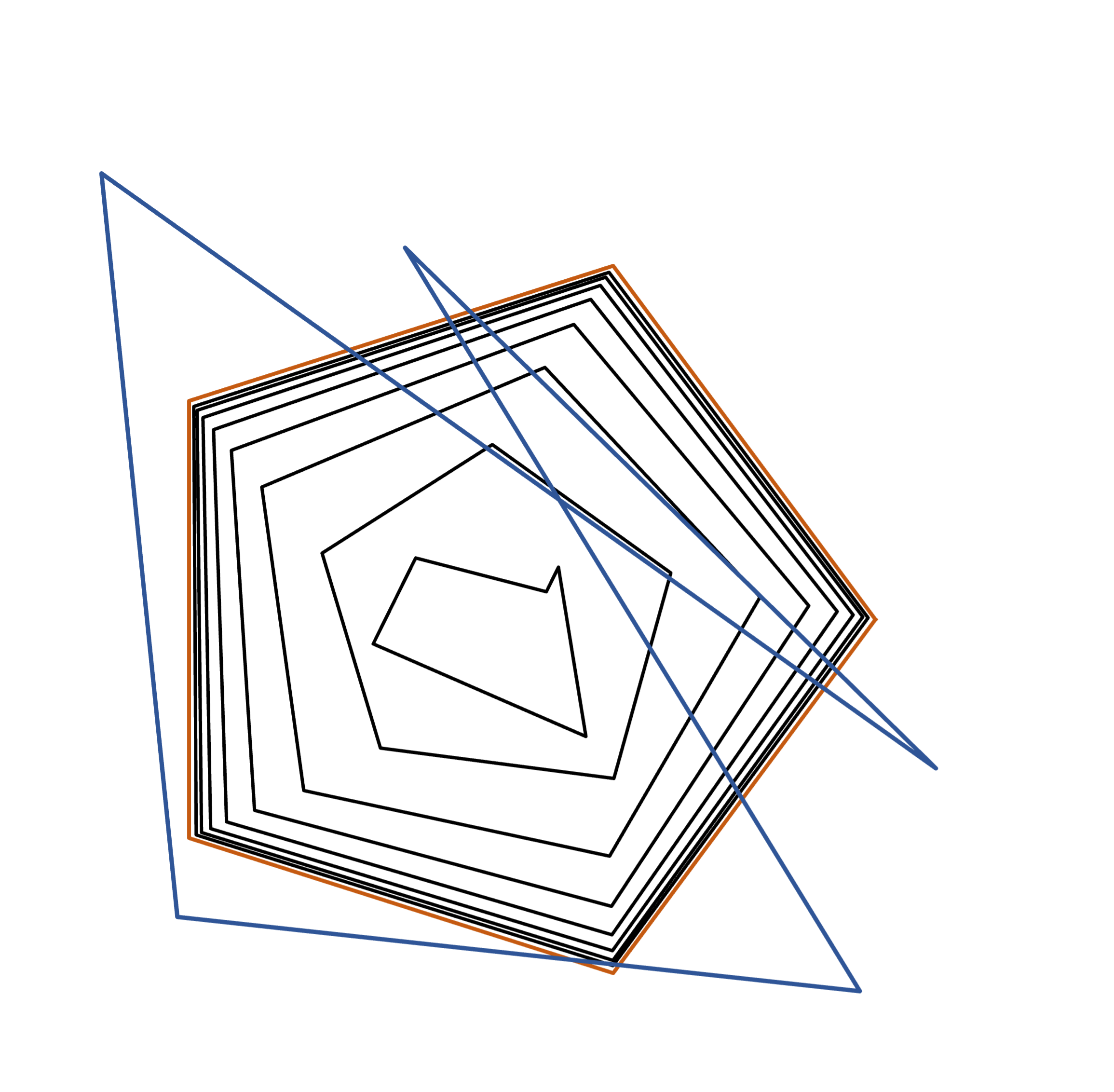}
    \caption{$m=2,\ \beta=4,\ $all $a_k = 0$}
    \label{fig:Yau_2_m=1}
  \end{subfigure}
  \hfill
   \begin{subfigure}[b]{0.31\textwidth}
    \includegraphics[width=\textwidth]{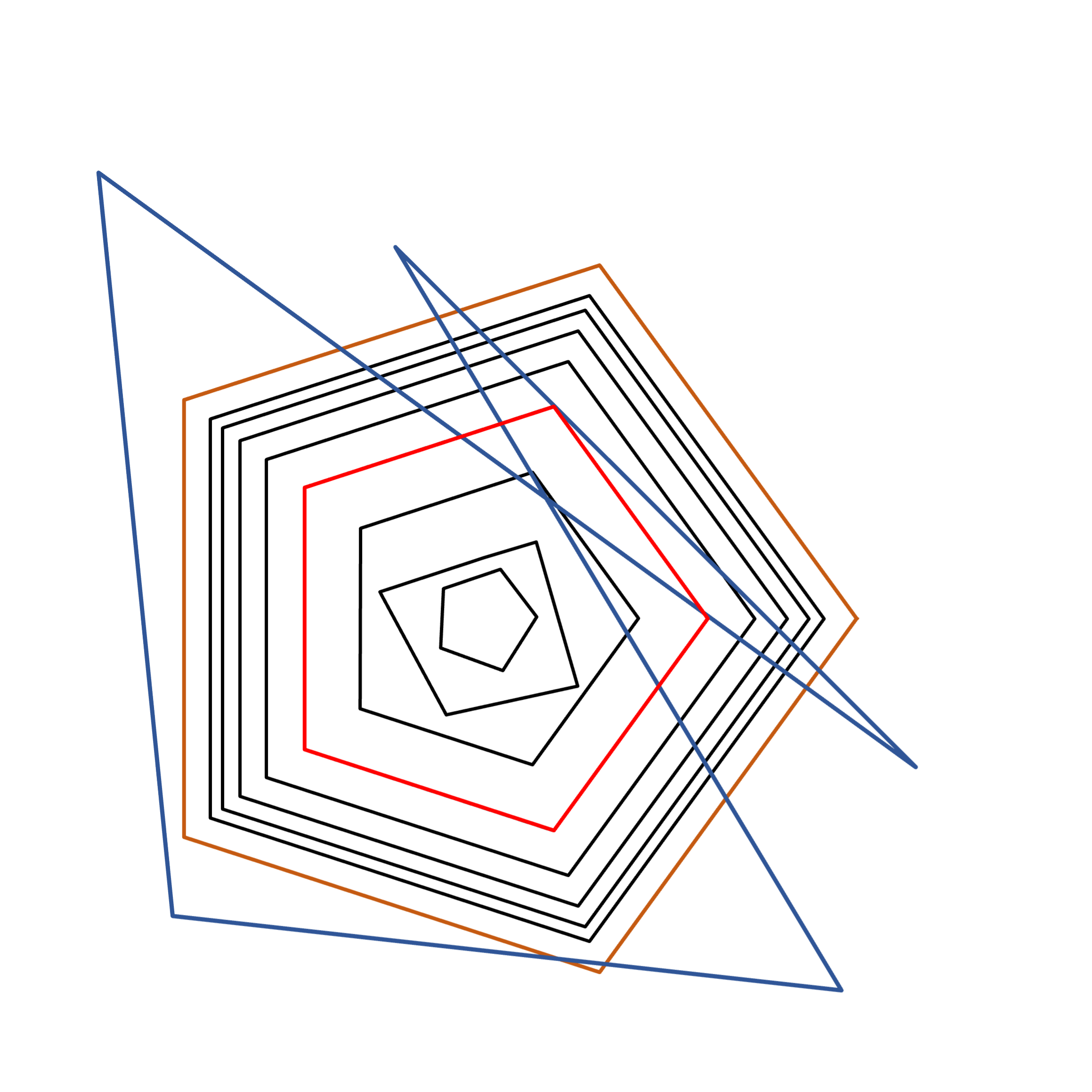}
    \caption{$m=1,\ \beta=4,\ a_k$ non-zero }
    \label{fig:Yau_3_m=1_inter}
  \end{subfigure}
 
  \caption{Different cases of pentagons flowing to regular pentagons under the semi-discrete Yau difference flow. In each case, selected time steps of the evolution are shown superimposed over the initial and target polygons. The initial polygon is given in blue and the target polygon in orange. The target polygon in this case is $5P_1.$ In (c), the arbitrary coefficients $a_k$ are prescribed such that the polygon flows to an intermediate polygon at a particular time, depicted in red. In this case the intermediate polygon is $3P_1$.} \label{fig:Yau_cases}
\end{figure*}

\begin{figure*}

  \begin{subfigure}[b]{0.31\textwidth}
    \includegraphics[width=\textwidth]{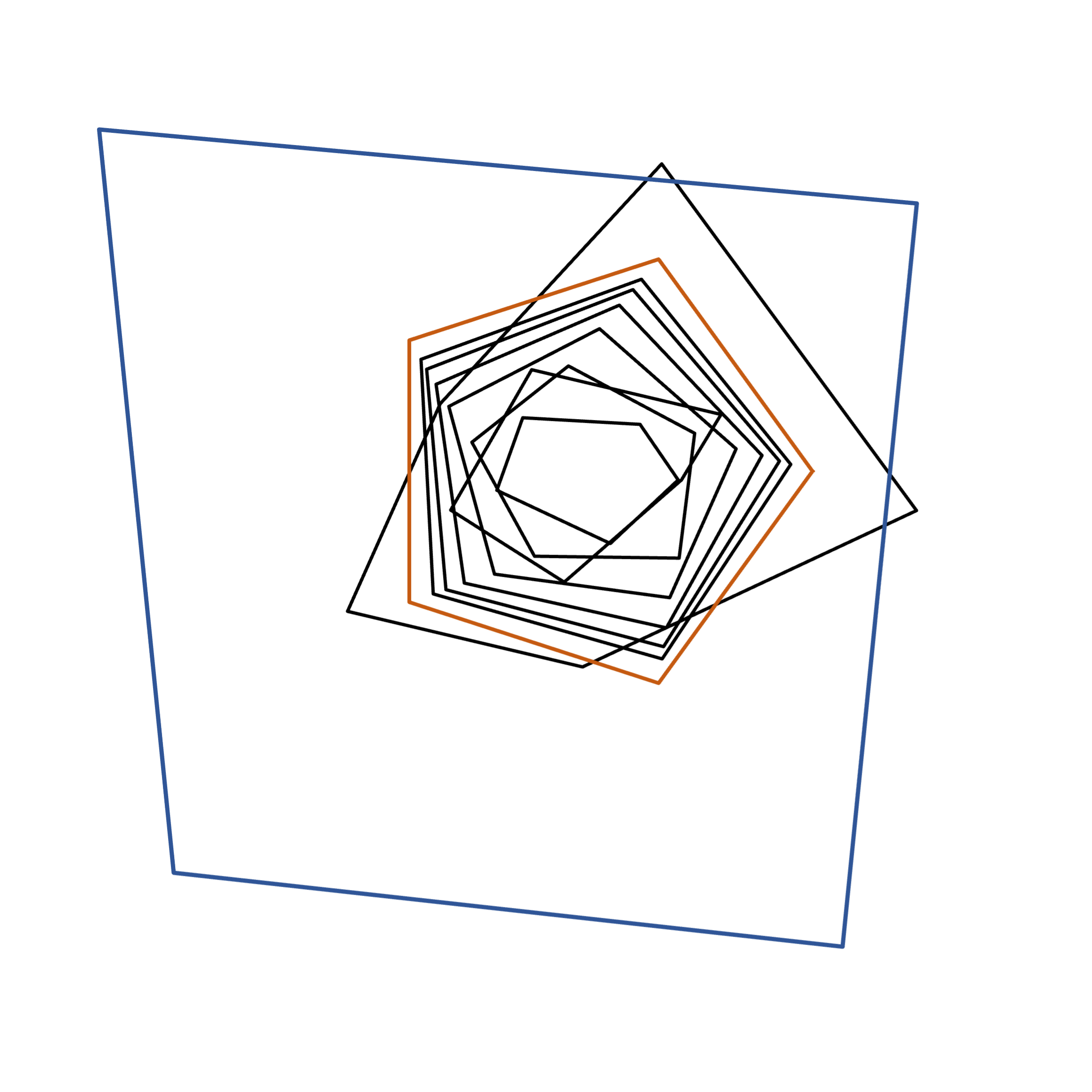}
    \caption{$m=2,\ \beta=4,\ $all $a_k=0$ }
    \label{fig:Yau_2_m=2_diff}
  \end{subfigure}
  \hfill
  \begin{subfigure}[b]{0.31\textwidth}
    \includegraphics[width=\textwidth]{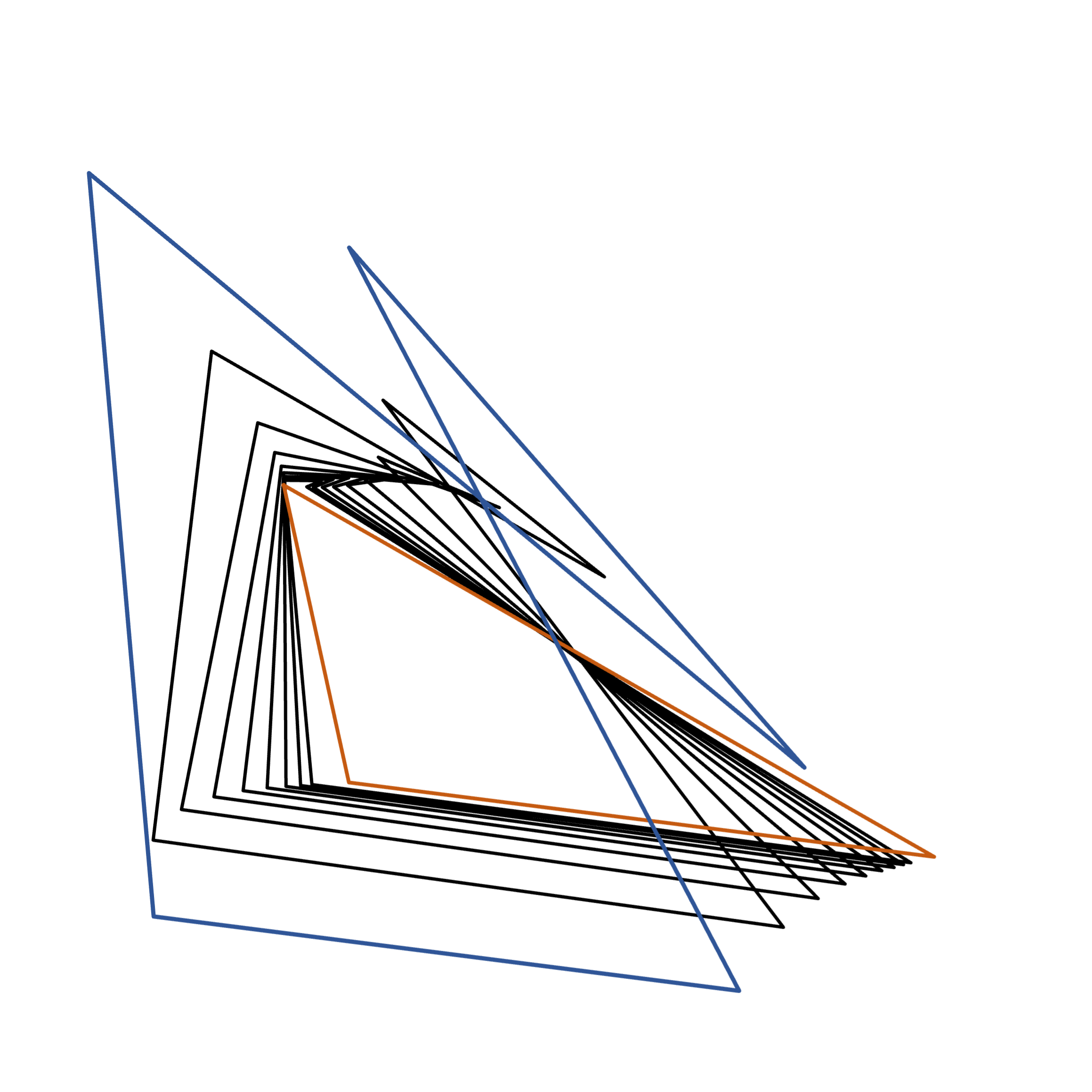}
    \caption{$m=1,\ \beta=4,\ $all $a_k = 0$}
    \label{fig:Yau_2_m=1_triangle_dup}
  \end{subfigure}
  \hfill
   \begin{subfigure}[b]{0.31\textwidth}
    \includegraphics[width=\textwidth]{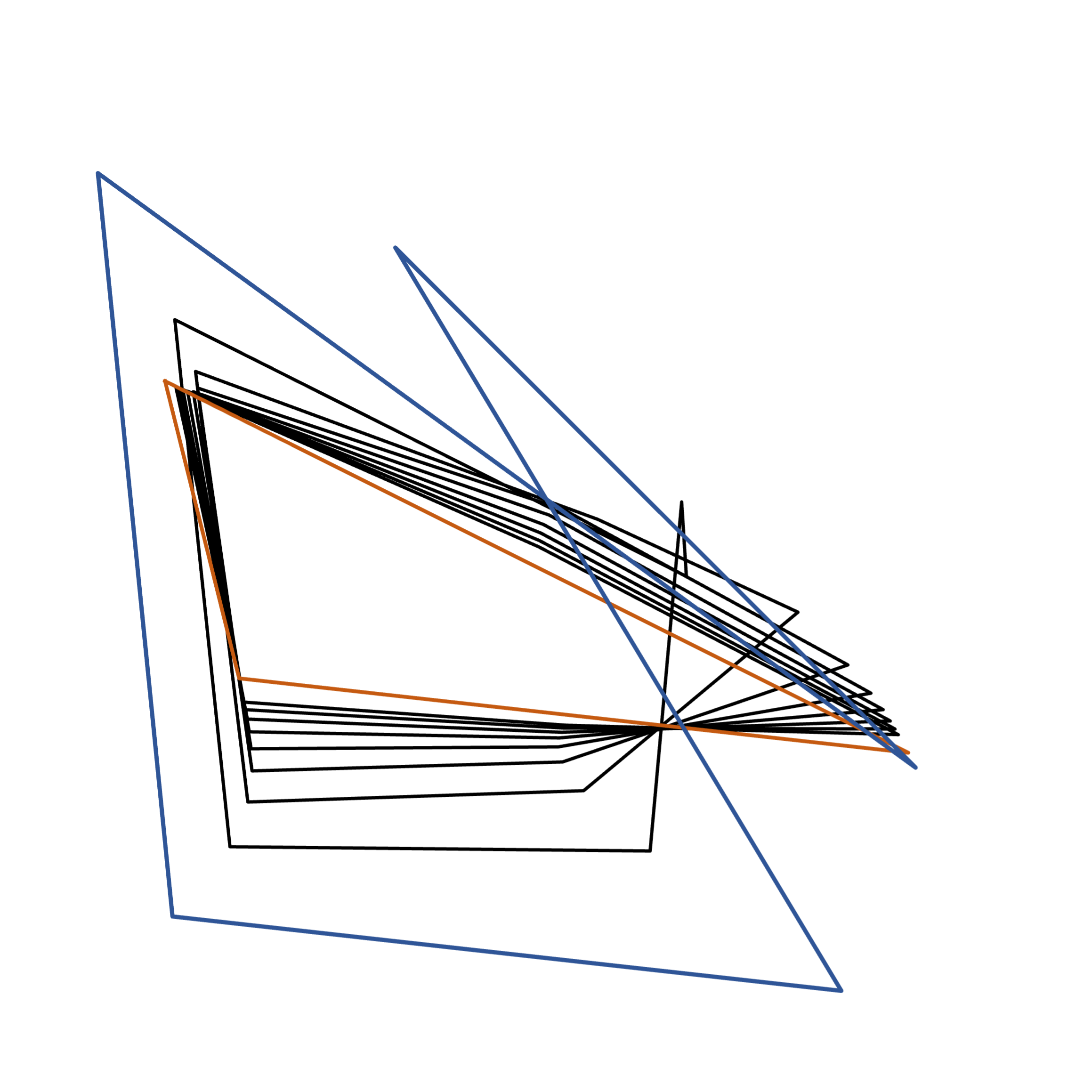}
    \caption{$m=1,\ \beta=4,\ $all $a_k = 0$ }
    \label{fig:Yau_2_m=1_triangle_line}
  \end{subfigure}
 
  \caption{Different cases of pentagons flowing under the semi-discrete Yau difference flow. In each case, selected time steps of the evolution are shown superimposed over the initial and target polygons. The initial polygon is given in blue, and the target polygon in orange. (a) depicts a quadrilateral flowing to a regular pentagon. (b) demonstrates a pentagon flowing to a triangle by duplicating excess vertices for the target polygon (c) demonstrates a pentagon flowing to a triangle by setting excess vertices to lie on the edges of the triangle.} \label{fig:Yau_cases_2}
\end{figure*}

Figure \ref{fig:Yau_cases} depicts examples of the semi-discrete Yau difference flow. In Figure \ref{fig:Yau_3_m=1_inter}, non-zero values of $a_k$ are chosen such that the polygon passes through an intermediate polygon at a fixed time before converging to the target polygon. Figure \ref{fig:Yau_cases_2} shows alternative cases for the Yau difference flow including where a polygon can be flowed to a polygon of different number of vertices by setting excess vertices along the line segments or by duplicating excess vertices.

\begin{remark}
\begin{enumerate}
    \item Hyperbolic flows that evolve one smooth curve to another are discussed in \cite{MO24}.  Parabolic flows that achieve this are described in \cite{lin2009evolving, MSW23}.  In general, some conditions on the smooth initial and target curves are needed.  For example, they might need to be strictly locally convex.  Alternatively, for curves given as radial graphs one may flow the radial graph function by the heat equation.

     \item In the case of $\beta = 0,$ there would be no exponential decay factors in the solution formula and instead of convergence to the target $\vec{Y}$ we would have oscillating about the target polygon for all time.
        \item By considering a sequence $t\rightarrow -\infty$ of initial polygons, we can construct a flow with `initial' (limiting $t\rightarrow -\infty$) polygon $\vec{X}_{-\infty}$ that passes through two states, say $\vec{X}_0$ and $\vec{X}_1$ at two distinct times before converging to the target polygon $\vec{Y}$.  The hyperbolic flow with damping \eqref{eqn:Yau_diff_eqn} allows two intermediate states $\vec{X}_0$ and $\vec{X}_1$ in such a process, as compared with a parabolic flow that would allow one intermediate state.  More intermediate states could be accommodated by using higher order flows.  Such considerations could be relevant in practical applications, for example, where a collection of robots or drones need to pass through several specific states before approach a long-term target state.

    \item The evolution equation (\ref{eqn:Yau_diff_eqn}) can flow \emph{any} initial polygon $\vec{X}_0$ with $n$ sides to \emph{any} target polygon $\vec{Y}$ with $n$ sides.  As in the smooth case and as discussed in \cite{lin2009evolving}) for example, there are also other ways of deforming one polygon to another, that do not involve a curvature flow.  In our setting we could simply take, for example, $\vec{X}:\left[ 0, 1\right] \rightarrow \mathbb{R}^p$ given by
$$\vec{X}\left( t\right) = t \vec{Y} + \left( 1-t\right) \vec{X}_0 \mbox{.}$$
  \item As with our earlier flows, one may consider ancient solutions to \eqref{eqn:Yau_diff_eqn}.  Again in comparison with the smooth case any solution at all may be extended back in time.

  \item The flow (\ref{eqn:Yau_diff_eqn}) can evolve an $n_1$-gon $\vec{X}_0$ to an $n_2$-gon $\vec{Y}$ where $n_1 \ne n_2$ are not necessarily the same.  We can either duplicate vertices or add additional vertices on the line segments of the polygon of fewer vertices. If for example $n_1 < n_2,$ then $\vec{X}_0$ is adjusted to a $n_2$-gon where some of its original vertices are duplicated (Figure \ref{fig:Yau_2_m=2_diff}). Similarly if $n_1 > n_2,$ $\vec{Y}$ can be adjusted to a $n_1$-gon with the same approach (Figure \ref{fig:Yau_2_m=1_triangle_dup}). An example of adding additional vertices along connecting line segments is given in Figure \ref{fig:Yau_2_m=1_triangle_line}. 
\end{enumerate}
   \end{remark}

\end{document}